\def\E{{\mathbb{E}}}
\def\P{{\mathbb{P}}}
\def\R{{\mathbb{R}}}
\def\zero{{\mathbf{a}}}
\def\set{C}
\def\Id{\text{\normalfont{Id}}}
\newcounter{step}
\newcommand{\newstep}{\noindent \textit{Step~$\thestep$. \addtocounter{step}{1}}}
\providecommand{\Fonction}[5]
{\begin{alignat*}{2}
 & #1 :\; & #2 & \longrightarrow #3 \\
 & & #4 & \longmapsto #5
\end{alignat*}}
\def\Ccal{{\mathcal{C}}}
\def\Ecal{{\mathcal{E}}}
\def\Fcal{{\mathcal{F}}}
\def\Jcal{{\mathcal{J}}}
\def\Ncal{{\mathcal{N}}}
\def\Scal{{\mathcal{S}}}
\def\Tcal{{\mathcal{T}}}
\def\Zcal{{\mathcal{Z}}}
\newcommand{\indicator}[1]{\mathbbm{1}_{\{#1\}}}
\newcommand{\norm}[1]{\lVert #1 \rVert}
\newtheorem{theorem}{Theorem}
\newtheorem{prop}{Proposition}[section]
\newtheorem{lemma}[prop]{Lemma}
\newtheorem*{assumption}{Assumption}
\newtheorem*{notation}{Notation}
\theoremstyle{definition}
\newtheorem*{rem}{Remark}
\title[The weak convergence of regenerative processes using some excursion path decompositions]{The weak convergence of regenerative processes using some excursion path decompositions}
\author[A.\ Lambert]{Amaury Lambert$^1$}
\email{amaury.lambert@upmc.fr}
\address{$^1$Laboratoire de Probabilit\'es et Mod\`eles Al\'eatoires\\
UMR 7599 CNRS and UPMC Univ Paris 06\\
Case courrier 188\\
4 Place Jussieu\\
F-75252 Paris Cedex 05, France}
\author[F.\ Simatos]{Florian Simatos$^2$}
\email{f.simatos@tue.nl}
\address{$^2$Department of Mathematics \& Computer Science \\
Eindhoven University of Technology \\ P.O. Box 513 \\
5600 MB Eindhoven, The Netherlands}
\thanks{While most of this research was carried out, A.\ Lambert was funded by project MANEGE 09-BLAN-0215 of ANR (French national research agency), and F.\ Simatos was affiliated with CWI and sponsored by an NWO-VIDI grant.}
\date{\today}
\begin{document}

\begin{abstract}
We consider regenerative processes with values in some general Polish space. We define their $\varepsilon$-big excursions as excursions $e$ such that $\varphi(e)>\varepsilon$, where $\varphi$ is some given functional on the space of excursions which can be thought of as, e.g., the length or the height of $e$. We establish a general condition that guarantees the convergence of a sequence of regenerative processes involving the convergence of $\varepsilon$-big excursions and of their endpoints, for all $\varepsilon$ in a set whose closure contains $0$. Finally, we provide various sufficient conditions on the excursion measures of this sequence for this general condition to hold and discuss possible generalizations of our approach to processes that can be written as the concatenation of i.i.d.\ motifs.
\end{abstract}

\maketitle

\section{Introduction}

This paper is concerned with the weak convergence of processes that regenerate when hitting some distinguished point, say $a$. These processes are usually called regenerative processes and $a$ regeneration point. When a regenerative process started at $a$ returns to $a$ immediately (e.g., Brownian motion), its excursions are described by a $\sigma$-finite measure, called excursion measure, on the space of c\`adl\`ag paths killed (or stopped) when they return to $a$. Together with some positive parameter, the excursion measure not only describes excursions but actually characterizes the law of the whole process. In this paper, we go one step further and study the extent to which the asymptotic behavior of a sequence of excursion measures contains information about the asymptotic behavior of the entire associated processes. For more details about excursion theory, the reader is referred to Blumenthal~\cite{Blumenthal92:0} for Markov processes or Chapter~$22$ in Kallenberg~\cite{Kallenberg02:0} (and references therein) for the general setting. 
\\

Some special care is needed when dealing with sequences of excursion measures. Indeed, although the framework for the weak convergence of measures is well-studied for finite measures, see for instance Billingsley~\cite{Billingsley99:0}, the technical apparatus available to study $\sigma$-finite measures such as the excursion measures we will be interested in is more limited (see nevertheless~\cite{Kozdron06:0,Lawler07:0,Pardoux11:0} for examples where the convergence of excursion measures is directly dealt with.) For this reason, we will be interested in the weak convergence of probability measures obtained by conditioning the excursion measures. This approach also has a natural sample-path interpretation that can be illustrated by considering a sequence of renormalized random walks converging to Brownian motion. 

Excursions of the random walks away from $0$ converge weakly to the Dirac mass at the trivial excursion constantly equal to $0$. This is a typical behavior due to the fact that the excursion measure of the Brownian motion is infinite. On the other hand, it can be shown that big excursions of the random walks, e.g., excursions with length greater than $\varepsilon > 0$, converge weakly to big excursions of Brownian motion. If $\varepsilon$ can be taken arbitrarily small, one may hope that this convergence characterizes the convergence of the whole process. The main result of the present paper, Theorem~\ref{thm:main} below, provides sufficient conditions for such a statement to hold. It must be noted that the convergence of big excursions does not in general imply tightness (a counter-example is provided at the beginning of Subsection~\ref{sub:tightness}). Our main theorem can therefore be seen as a new way of characterizing accumulation points.

More generally than the length, we can fix a non-negative mapping $\varphi$ on the space of excursions such that the push-forward of the excursion measure by $\varphi$ is still $\sigma$-finite (in the more accurate sense that any half-line $[\varepsilon,+\infty]$ has finite mass), and call $e$ an ``$\varepsilon$-big'' excursion when $\varphi(e)>\varepsilon$. We will sometimes call $\varphi(e)$ the ``size'' of $e$ and say that $e$ is ``measured'' according to $\varphi$. 
The law of an $\varepsilon$-big excursion is then well-defined: it can be equivalently seen as the law of the first $\varepsilon$-big excursion of the process away from $a$, or as the probability measure obtained by conditioning the excursion measure on $\varphi>\varepsilon$.
\\

The approach we develop is initially motivated by queueing theory: in particular, a special case of Theorem~\ref{thm:main} of the present paper was used in Lambert et al.~\cite{Lambert11:0} to study the Processor-Sharing queue. Decomposing a process into its excursions is natural in queueing theory. Indeed, for many stochastic networks the difficulties in analyzing the dynamics arise from the behavior at the boundaries of the state-space. Focusing on excursions that live in the interior of the state-space therefore makes it possible to circumvent this problem. Using Theorem~\ref{thm:main-3} of the present paper, this approach was successfully used in~\cite{Borst:0}
, see Section~\ref{sec:context} for more details.
%
%
%

\subsection*{Organization of the paper} Section~\ref{sec:main} contains the main result of the paper together with the minimal set of notation needed to state it; it also includes in Section~\ref{sec:context} a brief discussion of potential applications. Section~\ref{sec:proof} is devoted to the proof of this main result: Subsection~\ref{sub:notation} sets down notation used throughout the paper and Subsection~\ref{sub:deterministic} contains deterministic continuity results used in Subsection~\ref{sub:proof} to prove Theorem~\ref{thm:main}. Section~\ref{sec:extensions} displays various conditions under which the assumptions in Theorem~\ref{thm:main} hold. Finally, we discuss in Section~\ref{sec:discussion} potential generalizations of our approach to processes which are obtained by the concatenation of i.i.d.\ motifs.

\section{Main result} \label{sec:main}

\subsection{Main result}

Let $V$ be a Polish space with some distinguished element $a \in V$. Let $D$ be the set of c\`adl\`ag functions from $[0,\infty)$ to $V$, endowed with the $J_1$ topology (see next section). For $t\ge 0$, let $\theta_t: D \to D$ be the \emph{shift operator}, defined by $ \theta_t (f) := f(\, \cdot \, + t)$. Let $T: D \to [0,\infty]$ be the first hitting time of $a$, defined by 
$$
T(f) := \inf\{ t > 0: f(t) = a \,\}\in[0,+\infty].
$$ 

Note that we allow $T$ to take the value $+\infty$. We call \emph{excursion} a c\`adl\`ag function stopped whenever hitting $a$, that is, $e\in D$ is an excursion if $e(t) = a$ for every finite $t \geq T(e)$. We will sometimes call $T(e)$ the \emph{length} of $e$. We let $\Ecal \subset D$ denote the set of excursions. Also let $\zero \in \Ecal$ be the function which takes constant value $a$:
\Fonction{\zero}{[0,\infty)}{V}{t}{a.}

Then $\zero$ is the only excursion with null length. For $f \in D$, we call \emph{zero set} of $f$ the set $\Zcal(f):=\{t \geq 0: f(t) = a\}$. From the right-continuity of $f$, one sees that the set $\Zcal^c = [0,\infty) \setminus \Zcal$ is a countable union of disjoint intervals of the form $(g,d)$ or $[g,d)$ called \emph{excursion intervals}, see Kallenberg~\cite[Chapter~$22$]{Kallenberg02:0}. With every such interval, we may associate an excursion $e \in \Ecal$, defined as the function $\theta_g(f)$ stopped at its first hitting time $T(e) = d-g$ of $a$. We call $g$ and $d$ its \emph{left} and \emph{right endpoints}, respectively.

In the rest of the paper, we fix a measurable map $\varphi: \Ecal \to [0,\infty]$ such that
$$
e\not=\zero \Longleftrightarrow \varphi(e) > 0.
$$

Note in particular that $\varphi(\zero) = 0$. We call $\varphi(e)$ the \emph{size} of the excursion $e$. For each $\varepsilon>0$, we say that $e$ is \emph{$\varepsilon$-big}, or just \emph{big} if the context is unambiguous, if its size is strictly larger than $\varepsilon$, and \emph{$\varepsilon$-small} or \emph{small} otherwise. We denote by $D_\varphi \subset D$ the set of c\`adl\`ag functions $f$ such that $\varepsilon$-big excursions are locally finitely many for any $\varepsilon>0$:
$$
	D_\varphi = \Big\{ f\in D: \forall\varepsilon >0, \forall t>0,\text{ the number of $\varepsilon$-big excursions starting before $t$ is finite }\Big\}.
$$

Then we can define $e_\varepsilon(f)$ for $f \in D_\varphi$ as the first excursion $e$ of $f$ satisfying $\varphi(e) > \varepsilon$ and $g_\varepsilon(f)$ as its left endpoint, with the convention $(g_\varepsilon, e_\varepsilon)(f) = (+\infty, \zero)$ if no such excursion exists. The maps $e_\varepsilon$ and $g_\varepsilon$ are measurable maps from $D_\varphi$ to $\Ecal$ and $[0,\infty]$ respectively.
\\

Regeneration of Markov processes is well-studied since It\^o's seminal paper~\cite{Ito72:0}, see for instance Blumenthal~\cite{Blumenthal92:0}. Here, we need not assume that our processes are Markovian. We will say that a process $X$ with law $P$ is regenerative (at $a$) if there exists a measure $P_a$ such that for any stopping time $\tau$
\begin{equation} \label{eq:def-reg}
	P \big( \theta_\tau(X) \in \, \cdot \mid \Fcal_\tau \big) = P_a, \ \text{ $P$-almost surely on } \{ \tau < +\infty, \ X(\tau) = a \},
\end{equation}
where $\Fcal$ is the natural filtration of $X$.
It is known, see Kallenberg \cite{Kallenberg02:0} for rigorous statements, that for any regenerative process, the distributional behavior of its excursions away from $a$ can be characterized by a $\sigma$-finite measure $\Ncal$ on $\Ecal\setminus\{\zero\}$ called \emph{excursion measure}.

Note that if $X$ is a regenerative process with excursion measure $\Ncal$, then $\Ncal(\varphi = 0) = 0$ and $X$ almost surely belongs to $D_\varphi$ if and only if $\Ncal(\varphi > \varepsilon) < +\infty$ for every $\varepsilon>0$. Moreover, if holding times at $a$ are nonzero, then by the regeneration property they must be exponentially distributed. In the rest of the paper (except in the last section) we use the following notation.

\begin{notation}
	In all the paper except in Section~\ref{sec:discussion}, $X_n, X$ are regenerative processes that almost surely belong to $D_\varphi$, $\Ncal$ denotes the excursion measure of $X$ and it is assumed to have infinite mass.
\end{notation}

In the sequel $\overline \set$ for $\set \subset \R$ denotes the closure of $\set$. The following theorem is the main result of the paper.

\begin{theorem} \label{thm:main}
	Let $\set \subset (0,\infty)$ be such that $0 \in \overline \set$ and $\Ncal(\varphi =\varepsilon)=0$ for all $\varepsilon \in \set$. If the sequence $(X_n)$ is tight and for every $\varepsilon \in C$,
	\begin{equation} \label{eq:cond}
		\left( g_\varepsilon, e_\varepsilon, T \circ e_\varepsilon, \varphi \circ e_\varepsilon \right) (X_n) \Rightarrow \left( g_\varepsilon, e_\varepsilon, T \circ e_\varepsilon, \varphi \circ e_\varepsilon \right) (X),
	\end{equation}
	then $X_n \Rightarrow X$.
\end{theorem}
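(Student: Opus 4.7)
Since $(X_n)$ is tight, Prokhorov's theorem reduces the statement to showing that every weak accumulation point $Y$ of $(X_n)$ satisfies $Y \stackrel{d}{=} X$. So fix a subsequence with $X_{n_k} \Rightarrow Y$. By~\eqref{eq:cond} the quadruples $(g_\varepsilon, e_\varepsilon, T \circ e_\varepsilon, \varphi \circ e_\varepsilon)(X_{n_k})$ are tight for each $\varepsilon \in C$. A diagonal extraction along a countable dense subset $C_0 \subset C$, combined with Skorokhod's representation theorem, lets me work on a common probability space on which $X_{n_k} \to Y$ almost surely in $(D, J_1)$ and, for every $\varepsilon \in C_0$, the quadruple $(g_\varepsilon, e_\varepsilon, T \circ e_\varepsilon, \varphi \circ e_\varepsilon)(X_{n_k})$ converges almost surely to a random limit distributed as $(g_\varepsilon, e_\varepsilon, T \circ e_\varepsilon, \varphi \circ e_\varepsilon)(X)$.

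The next step is to identify this almost-sure limit with $(g_\varepsilon, e_\varepsilon, T \circ e_\varepsilon, \varphi \circ e_\varepsilon)(Y)$. Here I invoke the deterministic continuity results of Subsection~\ref{sub:deterministic}, which I expect to say that this quadruple map is continuous at any $f \in D_\varphi$ having no excursion of size exactly $\varepsilon$ (with possibly an additional genericity requirement at level $\varepsilon$). Under $\mathcal{N}(\varphi=\varepsilon)=0$ this genericity holds $\mathcal{L}(X)$-almost surely; transferring it to $Y$ uses the almost-sure convergence of the sizes together with the fact that the limiting distribution of $\varphi \circ e_\varepsilon$ places no mass at $\varepsilon$. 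Granting this identification, we obtain $(g_\varepsilon, e_\varepsilon, T \circ e_\varepsilon, \varphi \circ e_\varepsilon)(Y) \stackrel{d}{=} (g_\varepsilon, e_\varepsilon, T \circ e_\varepsilon, \varphi \circ e_\varepsilon)(X)$ for every $\varepsilon \in C_0$, and by repeating the extraction for any given $\varepsilon$, for every $\varepsilon \in C$.

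To propagate from the first $\varepsilon$-big excursion to the full sequence of $\varepsilon$-big excursions and endpoints, I iterate after shifting at the stopping time $g_\varepsilon + T \circ e_\varepsilon$: the regenerative property of each $X_{n_k}$ at this time makes the shifted process independent of the past and equal in law to $X_{n_k}$, and the same independence and law equality is inherited by $Y$ in the limit. Induction then matches the joint law of all $\varepsilon$-big excursions and their left endpoints under $\mathcal{L}(Y)$ and $\mathcal{L}(X)$, for every $\varepsilon \in C$. Finally, since $0 \in \overline{C}$, a limiting argument along $\varepsilon_k \downarrow 0$ in $C$ recovers the full path law: the $\sigma$-algebra generated by all $\varepsilon$-big excursions and endpoints (for $\varepsilon$ ranging over $C$) coincides up to null sets with the canonical $\sigma$-algebra on $D_\varphi$, the small-excursion complement being collectively negligible as $\varepsilon \to 0$. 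Hence $Y \stackrel{d}{=} X$.

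The principal obstacle I anticipate is the identification carried out in the second step. The deterministic continuity lemmas require the limiting path $Y$ to be generic at level $\varepsilon$, which holds for $X$ by the hypothesis $\mathcal{N}(\varphi=\varepsilon)=0$ but must be transferred to $Y$ whose law is a priori undetermined. The only leverage available is the weak convergence of $\varphi \circ e_\varepsilon$ and the absence of atom of the limit at $\varepsilon$, so the argument will likely take a Portmanteau-type form controlling the probability that $Y$ has an excursion of near-critical size uniformly in the approximation. A secondary delicacy is transferring the regenerative structure to $Y$, but this is presumably circumvented by pulling the regeneration back to the finite-$n$ level $X_{n_k}$ before applying Skorokhod to the shifted processes.
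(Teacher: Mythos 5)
Your overall architecture (tightness, Skorokhod representation, identification of the accumulation point) matches the paper's, but the central step of your argument rests on a claim that is false and that the paper is specifically engineered to avoid: you assert that the deterministic lemmas show the quadruple map $(g_\varepsilon, e_\varepsilon, T\circ e_\varepsilon, \varphi\circ e_\varepsilon)$ is continuous at paths with no excursion of size exactly $\varepsilon$, and you use this to conclude $(g_\varepsilon,e_\varepsilon,\dots)(X_{n_k})\to(g_\varepsilon,e_\varepsilon,\dots)(Y)$. No such continuity holds, and no lemma in Subsection~\ref{sub:deterministic} asserts it. The obstruction is not the near-critical sizes you flag at the end; it is that the zero set is not stable under $J_1$ convergence. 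If $f_n\to f$, the approximating paths may touch $a$ briefly inside what becomes a single excursion interval of $f$ (so the first $\varepsilon$-big excursion of $f_n$ converges to a path that is not an excursion of $f$ at all), or two consecutive big excursions of $f_n$ separated by a vanishing visit to $a$ may merge into one excursion of $f$. The hypothesis $\Ncal(\varphi=\varepsilon)=0$ does nothing to rule this out, so the almost-sure limit of $(g_\varepsilon,e_\varepsilon,\dots)(X_{n_k})$ cannot be identified with the corresponding functional of $Y$, and the induction and $\sigma$-algebra argument built on top of it collapses.

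The paper's proof inverts the logic precisely to sidestep this. It never evaluates $e_\varepsilon$ on the limit $Y$. Instead, using regeneration of each $X_n$ it packages \emph{all} $\varepsilon$-big excursions and their endpoints into a subdivision $\Pi^n_\varepsilon$ and an excursion sequence $E^n_\varepsilon$, proves their joint convergence over all $\varepsilon\in C$ (Lemma~\ref{lemma:thinning}, which is where $\varphi\circ e_\varepsilon$ is needed), and then computes the limit of the truncated path $\Phi^{\Pi^n_\varepsilon}(X_n)=\Phi_\varepsilon(X_n)$ in two ways: Lemma~\ref{lemma:limit-truncation} gives the limit $\Psi^{\Pi'_\varepsilon}(E'_\varepsilon)$ (distributed as $\Phi_\varepsilon(X)$), while Lemma~\ref{lemma:continuity-truncation} gives $\Phi^{\Pi'_\varepsilon}(Y')$ — a truncation of $Y'$ along the \emph{limiting subdivision}, with no claim that these intervals are excursion intervals of $Y'$. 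Equating the two and letting $\varepsilon\to 0$ in $C$ (via Lemma~\ref{lemma:Phi} and the fact that the zero set of $X$ has empty interior) identifies $Y'$ with a copy of $X$ on a dense set of times. To repair your proof you would have to replace your continuity claim with this double-limit identification, at which point you would essentially be reproducing the paper's argument.
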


The conditions of the previous theorem are sharp, in the sense that if one of the assumptions is removed, one can build an example where the conclusion does not hold.

The proof of Theorem~\ref{thm:main} essentially relies on continuity properties of some truncation and concatenation operators (Lemmas~\ref{lemma:Phi},~\ref{lemma:continuity-truncation} and~\ref{lemma:limit-truncation}) which allow to identify accumulation points of the tight sequence~$(X_n)$. It must be noted that these operators are not continuous in general (the aforementioned lemmas prove continuity properties under specific assumptions), so that it is not at all natural, and even less obvious, that the previous theorem holds under such minimal assumptions.

Section~\ref{sec:extensions} contains several results related to the assumptions of this theorem. Of particular interest are Subsection~\ref{sub:tightness}, where we discuss the implications of the assumption~\eqref{eq:cond} with respect to tightness of the sequence $(X_n)$ (we show that~\eqref{eq:cond} does not imply tightness but simplifies its proof), and Subsection~\ref{sub:excursion-measures}, where we consider the special case where the $X_n$'s have finite excursion measures and give various conditions on their excursion measures that imply tightness or partial versions of~\eqref{eq:cond}.

Finally, we discuss in Section~\ref{sec:discussion} the possibility to extend Theorem~\ref{thm:main} to processes which can be written as the concatenation of i.i.d.\ paths, which we call \emph{motifs}, even if these motifs are not excursions.

\subsection{Potential applications} \label{sec:context}

Before delving into technical details, we discuss in this section the potential of applications of the main results of the present paper. As mentioned in the introduction, a natural context in which these ideas can be applied is queueing theory. In this context, the results of the present paper have been used in~\cite{Lambert11:0} to improve and simplify results previously known and in~\cite{Borst:0, Lambert12:1} to solve two open problems.
%

The initial motivation for this work comes from the study of the Processor-Sharing queue, a fundamental service discipline which is notoriously challenging to analyze. The scaling limit of the Processor-Sharing queue, assuming finite fourth moment of the service distribution, has been derived by Gromoll~\cite{Gromoll04:0}, by building on~\cite{Gromoll02:0} and using the state-space collapse approach. It is fair to say that this method, although very intuitive and powerful, is also extremely technical to implement. In Lambert et al.~\cite{Lambert11:0}, we revisited this result with the approach of the present paper: in addition to a much shorter and less technical proof, we could also improve the moment condition on the service distribution to a minimal finite variance assumption (however, by considering general arrival processes and measure-valued processes, Gromoll's result~\cite{Gromoll04:0} is on some other aspects stronger than ours, see~\cite{Lambert11:0} for a detailed discussion). In~\cite{Lambert11:0} a special case of Theorem~\ref{thm:main} was proved and used: indeed, the proof of Theorem~\ref{thm:main} can be simplified when the sequence $(X_n)$ is known to be C-tight.

Moreover, the results of the present paper allowed to solve a long-standing open question regarding the scaling limit of the Processor-Sharing queue in the case of service distribution with infinite second moment. In particular, in~\cite{Lambert12:1} uniqueness of the scaling limit was proved, and the only possible limit was characterized through its excursion measure; in this context, the results of Section~\ref{sub:excursion-measures}, in particular Theorem~\ref{thm:ex}, were used. The example of the Processor-Sharing queue is also interesting because it shows the added-value of allowing for a general function $\varphi$: because of time-change manipulations, it is natural in this context to measure excursions according to $\varphi(e) = \int_0^{T(e)} (1/e)$. As a last example of the potential usefulness of our approach in queueing theory, let us mention the recent paper~\cite{Borst:0}: there, Theorem~\ref{thm:main-3} of the present paper was invoked to derive the scaling limit of a multi-dimensional stochastic network. Both for the Processor-Sharing queue in the infinite variance case and for this last example, it is not clear whether other classical approaches (e.g., convergence of the finite-dimensional distributions, continuous mapping theorem, martingale functional central limit theorem) could be used.

\section{Proof} \label{sec:proof}

\subsection{$J_1$ topology} \label{sub:notation}
Let $v: V^2 \to [0,\infty)$ be the metric on $V$ and write $v(x) = v(x,a)$ for $x \in V$. For $f \in D$ and $t \geq 0$ set
$$
\Delta f(t) = v(f(t), f(t-))
$$
with the convention $f(t-) = f(t)$ for $t = 0$. If $f, h \in D$ and $m \geq 0$, let $v(f,h) \in D$ denote the function
\Fonction{v(f,h)}{[0,\infty)}{[0,\infty)}{t}{v(f(t), h(t))}
and let $v_m(f,h), v_\infty(f,h) \geq 0$ be the numbers
\[ v_m(f,h) := \sup_{[0, m]} v(f, h) \ \text{ and }\ v_\infty(f,h) := \sup_{[0, \infty)} v(f, h) = \lim_{m \to +\infty} v_m(f,h). \]
For any $f\in D$ and $m\in[0,\infty)$, we define 
$$
v(f) := v(f,\zero)\quad\mbox{ and }\quad v_m(f) = v_m(f, \zero).
$$

Let $\Lambda$ be the set of real-valued functions which are continuous, strictly increasing, unbounded and start at $0$:
$$\Lambda = \big\{ \lambda: [0,\infty) \to [0,\infty): \lambda \text{ is an increasing bijection}\big\}.$$

If $f: [0,\infty) \to \R$ is a real-valued function and $m \geq 0$, set $\norm{f}_m := \sup_{[0,m]} |f|$ and $\norm{f}_\infty := \sup_{[0,\infty)} |f|$. Let $\Id \in \Lambda$ be the identity map and $\Lambda_m$ for $m \in [0,\infty]$ be the set of $\Lambda$-valued sequences $(\lambda_n)$ such that $\norm{\lambda_n - \Id}_m \to 0$. For $f_n, f \in D$ we write $f_n \to f$ for the convergence in the $J_1$ topology. We will indifferently use two equivalent characterizations of this convergence: either that there exists a $\Lambda$-valued sequence $(\lambda_n)$ such that $(\lambda_n) \in \Lambda_m$ and $v_m( f_n \circ \lambda_n, f) \to 0$ for every $m$ in a discrete, unbounded set, see for instance Stone~\cite{Stone63:1}; or that there exists a sequence $(\lambda_n) \in \Lambda_\infty$ such that $v_m( f_n \circ \lambda_n, f) \to 0$ for every $m$ in a discrete, unbounded set, see for instance Billingsley~\cite{Billingsley99:0}.

If $f_n$ and $f$ are real-valued c\`adl\`ag functions, we will also use the notation $f_n \to f$ to denote convergence in the corresponding $J_1$ topology. Note that when $f_n, f \in D$ are such that $f_n \to f$, then $v(f_n) \to v(f)$, as can be seen from the following inequality, valid for any $f_n, f \in D$, $\lambda_n \in \Lambda$ and $m \geq 0$ and that results from the reversed triangular inequality:
\begin{multline*}
	\norm{v(f_n) \circ \lambda_n - v(f)}_m = \sup_{0 \leq t \leq m} \left| v\left( f_n(\lambda_n(t)), a \right) - v(f(t), a) \right|\\
	\leq \sup_{0 \leq t \leq m} v\left( f_n(\lambda_n(t)), f(t) \right) = v_m(f \circ \lambda_n, f).
\end{multline*}

So far, we have defined the convergence of real-valued and $V$-valued c\`adl\`ag functions in the corresponding $J_1$ topologies. In the following, (finite or infinite) product spaces will always be equipped with the product topology. For instance, if for each $n \geq 1$ we have a sequence $U_n = (u_{n,k}, k \geq 1)$ where $u_{n,k}$ for each $k$ is either a real number or an $\R$- or $V$-valued function, then we note $U_n \to U = (u_k, k \geq 1)$ to mean that $u_{n,k} \to u_k$ for each $k \geq 1$.

\begin{rem}
	If $f,h \in D$, then $(f,h)$ can be seen as an element of $D \times D$ or as an element of $D(V^2)$, the space of c\`adl\`ag functions taking values in $V \times V$. As topological spaces, for the $J_1$ topology, the topology of $D(V^2)$ is strictly finer than the product topology of $D \times D$ (see Jacod and Shiryaev~\cite[Remark~VI.$1.21$]{Jacod03:0}). In particular, if $(f_n)$ and $(h_n)$ are relatively compact sequences of $D$, then the sequence $(f_n, h_n)$ is also relatively compact in the product topology (which we consider in the present paper) but may fail to be so in the topology of $D(V^2)$.
\end{rem}

\subsection{Truncation and path decompositions} \label{sub:deterministic}

The proof of Theorem~\ref{thm:main} essentially relies on continuity properties of the family $(\Phi_\varepsilon, \varepsilon > 0)$ of truncation maps, defined as follows: let $\varepsilon > 0$, $f \in D$, $t \geq 0$ and $e^{st}(f,t) \in \Ecal$ be the excursion of $f$ straddling $t$ (which is unambiguously defined for $t \in [0,\infty) \setminus \Zcal(f)$, while for $t \in \Zcal(f)$ is defined as $e^{st}(f,t) = \zero$). Then $\Phi_\varepsilon(f)(t)$ is defined by:
\begin{equation} \label{eq:Phi}
	\Phi_\varepsilon(f)(t) = \begin{cases}
	f(t) & \text{ if } \varphi(e^{st}(f,t)) > \varepsilon,\\
	a & \text{ else.}
\end{cases}
\end{equation}

In words, $\Phi_\varepsilon$ is the map that truncates $\varepsilon$-small excursions to $\zero$. We have the following intuitive result, pertained to continuity properties of the family $(\Phi_\varepsilon, \varepsilon > 0)$ as $\varepsilon \to 0$.

\begin{lemma} \label{lemma:Phi}
	For any $f \in D$, we have $\Phi_\varepsilon(f) \to f$ as $\varepsilon \to 0$.
\end{lemma}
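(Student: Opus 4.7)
My plan is to prove the stronger statement that for every $m>0$, $v_m(\Phi_\varepsilon(f),f)\to 0$ as $\varepsilon\to 0$; since uniform convergence on compacts is a particular case of $J_1$ convergence (with the trivial time change $\lambda_\varepsilon=\Id$), this suffices.

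Fix $m>0$. By construction, $\Phi_\varepsilon(f)$ agrees with $f$ on $\Zcal(f)$ and on every $\varepsilon$-big excursion interval of $f$, while $\Phi_\varepsilon(f)(t)=a$ on every $\varepsilon$-small excursion interval; therefore
\[ v_m(\Phi_\varepsilon(f),f) = \sup\bigl\{ v(f(t)) : t\in[0,m],\ t\text{ lies in an }\varepsilon\text{-small excursion of }f\bigr\}. \]
Given $\eta>0$, it thus suffices to prove the following key claim: the set $J_\eta^m$ of excursion intervals $I$ of $f$ satisfying $\sup_{t\in I\cap[0,m]} v(f(t))>\eta$ is finite. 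Indeed, granted this, the number $\varepsilon_0:=\min_{I\in J_\eta^m}\varphi(e_I)$ is strictly positive as a finite minimum of strictly positive reals (using $\varphi(e)>0$ whenever $e\neq\zero$); for any $\varepsilon<\varepsilon_0$, every $I\in J_\eta^m$ is $\varepsilon$-big, so each $\varepsilon$-small excursion interval $I$ lies outside $J_\eta^m$ and hence satisfies $\sup_{t\in I\cap[0,m]} v(f(t))\leq\eta$, yielding $v_m(\Phi_\varepsilon(f),f)\leq\eta$.

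To prove the claim, assume for contradiction that $J_\eta^m$ contains an infinite sequence of distinct intervals $I_n$ with left and right endpoints $g_n<d_n$; pick $t_n\in I_n\cap[0,m]$ with $v(f(t_n))>\eta/2$ and extract a strictly monotone subsequence converging to some $t^*\in[0,m]$. If $t_n\uparrow t^*$: pairwise disjointness and $t_n<t_{n+1}$ force the ordering $d_n\leq g_{n+1}<t_{n+1}\leq t^*$, so $d_n\uparrow t^*$; since each right endpoint $d_n$ belongs to $\Zcal(f)$ we have $f(d_n)=a$, and the c\`adl\`ag property then gives $f(t^*-)=\lim_n f(d_n)=a$, while $f(t^*-)=\lim_n f(t_n)$ has distance to $a$ at least $\eta/2$, a contradiction. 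If $t_n\downarrow t^*$: right-continuity yields $f(t_n)\to f(t^*)$, hence $v(f(t^*))\geq\eta/2>0$, forcing $t^*$ to lie in some excursion interval $I^*$ with right endpoint $d^*>t^*$; then the right neighborhood $(t^*,d^*)$ of $t^*$ is contained in $I^*$, so $t_n\in I^*$ for all large $n$, which by disjointness forces $I_n=I^*$ eventually, contradicting the distinctness of the $I_n$.

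I expect the main obstacle to be the key claim, whose proof rests on a careful accumulation-point argument that must combine three ingredients at once: the pairwise disjointness of excursion intervals, the fact that their right endpoints are zeros of $f$, and the c\`adl\`ag regularity of $f$ at the accumulation point $t^*$. Once the claim is in hand, the rest follows mechanically from the definition of $\Phi_\varepsilon$ and the assumption $\varphi(e)>0$ for $e\neq\zero$.
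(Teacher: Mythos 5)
Your proof is correct and follows essentially the same route as the paper's: bound $v_m(\Phi_\varepsilon(f),f)$ by the oscillation of $\varepsilon$-small excursions and use that only finitely many excursions meeting $[0,m]$ can oscillate by more than $\eta$, then take $\varepsilon$ below their minimal size. The only difference is that you supply a full accumulation-point argument for the finiteness claim, which the paper simply asserts as a consequence of the c\`adl\`ag property.
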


\begin{proof}
	Let $m \geq 0$: then by definition for any $t \geq 0$ we have $v(\Phi_\varepsilon(f)(t), f(t)) = v(f(t))$ if $\varphi(e^{st}(f,t)) \leq \varepsilon$ and $0$ otherwise, so that
	\[ v_m \left( \Phi_\varepsilon(f), f \right) \leq \sup \left\{ v_m(e) : e \in \Tcal_\varepsilon \right\} \]
	where $\Tcal_\varepsilon$ is the set of excursions $e$ of $f$ with $\varphi(e) \leq \varepsilon$ and left endpoint $g \leq m$. Let $\delta > 0$: because $f$ is c\`adl\`ag, there may only be finitely many excursions $e$ of $f$ starting before $m$ with $v_m(e) \geq \delta$. Let $\varepsilon_0 > 0$ be the smallest size of these excursions: then clearly, for $\varepsilon < \varepsilon_0$ any excursion $e \in \Tcal_\varepsilon$ must satisfy $v_m(e) < \delta$, in particular $\sup \{ v_m(e): e \in \Tcal_\varepsilon \} \leq \delta$ for $\varepsilon < \varepsilon_0$ which proves the result.
\end{proof}

We are now interested in continuity properties of $\Phi_\varepsilon$ for a given $\varepsilon > 0$. We adopt a more general viewpoint, inspired by Lemma~$4.3$ in Whitt~\cite{Whitt05:0}, and see $\Phi_\varepsilon$ as the concatenation of paths according to a given subdivision. To formalize this idea we introduce some additional notation.
\\

We call $S = (s_k, k \geq 0) \in [0,\infty]^\infty$ a \emph{subdivision} if the two following conditions are met: $(1)$ $s_k \leq s_{k+1}$ for $k < |S|$ and $s_k = +\infty$ for $k \geq |S|$, with $|S| = \inf\{ k \geq 0: s_k = +\infty \} \in \{1, \ldots, +\infty\}$ and $(2)$ $\bigcup_{k \geq 0} [s_k, s_{k+1}) = [0,\infty)$, where from now on we adopt the convention $[+\infty, +\infty) = \emptyset$. Note in particular that these two conditions imply that $s_0 = 0$.

The subdivision $S$ is called \emph{strict} if $s_k < s_{k+1}$ for $k < |S|$ and in the rest of the paper, $\Scal$ and $\Scal_+$ denote respectively the set of subdivisions and of strict subdivisions.
\\

For $S = (s_k, k \geq 0) \in \Scal$ we consider the truncation and patching operators $\Phi^S: D \to D$ and $\Psi^S: D^\infty \to D$. They are defined as follows, for $f \in D$, $\zeta = (\zeta_k, k \geq 1) \in D^\infty$ and $t \geq 0$:
\[ \Phi^S(f)(t) = \begin{cases}
	f(t) & \text{ if } t \in \bigcup_{k \geq 0} [s_{2k+1}, s_{2k+2}),\\
	a & \text{ else}
\end{cases} \]
and
\[ \Psi^S(\zeta)(t) = \begin{cases}
	\zeta_{k+1}(t-s_{2k+1}) & \text{ if there exists $k \geq 0$ such that } t \in [s_{2k+1}, s_{2k+2}),\\
	a & \text{ else.}
\end{cases} \]

Note that if it exists, the $k$ appearing in the definition of $\Psi^S$ is necessarily unique. For $S \in \Scal$ we also define the decomposition operator $E^S: D \to D^\infty$ for $f \in D$, $t \geq 0$ and $k \geq 1$ by $E^S(f) = (e_k^S(f), k \geq 1) \in D^\infty$ with
\[ e_k^S(f)(t) = \begin{cases}
f(t + s_{2k-1}) & \text{ if } 0 \leq t < s_{2k} - s_{2k-1},\\
a & \text{ else} \end{cases} \]
with the convention in the above display that $s_{2k} - s_{2k-1} = 0$ if $s_{2k-1} = +\infty$. In particular, if $|S| < +\infty$ then only finitely many of the $e^S_k$'s may be different from $\zero$.
\\

In words, $\Phi^S(f)$ is obtained from $f$ by truncating it to $a$ on odd intervals (first interval, third interval, \ldots) of $S$; $e_k^S(f)$ is the path taken by $f$ on the $k$-th even interval; $\Psi^S(\zeta)$ is obtained from the sequence of paths $\zeta$ by placing the $n$-th one on the $n$-th even interval (and truncating it to $a$ away from this interval). It is therefore plain that these three operators are related as follows.

\begin{lemma} \label{lemma:relation-operators}
	For any $S \in \Scal$, we have $\Phi^S = \Psi^{S} \circ E^S$.
\end{lemma}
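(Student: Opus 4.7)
The proof is a direct verification from the definitions, with the only subtlety being the bookkeeping of indices (the operator $\Psi^S$ indexes paths starting at $k=0$ via $\zeta_{k+1}$ on $[s_{2k+1},s_{2k+2})$, whereas $E^S$ indexes excursions starting at $k=1$ via the interval $[s_{2k-1},s_{2k})$, so one must reindex consistently). My plan is to fix $f \in D$ and $t \geq 0$ and split into two cases according to whether $t$ lies in some ``even'' interval or not.

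In the first case, suppose $t \in [s_{2k+1},s_{2k+2})$ for some $k \geq 0$ (such a $k$, if it exists, is unique since $S$ is a subdivision). Then by definition $\Phi^S(f)(t) = f(t)$. On the other hand, by the definition of $\Psi^S$ applied to $\zeta = E^S(f)$,
\[
\Psi^S(E^S(f))(t) = e_{k+1}^S(f)(t - s_{2k+1}).
\]
Setting $u = t - s_{2k+1}$, the condition $t \in [s_{2k+1},s_{2k+2})$ becomes $0 \leq u < s_{2k+2} - s_{2k+1} = s_{2(k+1)} - s_{2(k+1)-1}$, which is exactly the range where the first branch of the definition of $e_{k+1}^S(f)$ applies; hence $e_{k+1}^S(f)(u) = f(u + s_{2(k+1)-1}) = f(u + s_{2k+1}) = f(t)$, matching $\Phi^S(f)(t)$.

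In the second case, $t \notin \bigcup_{k\ge 0}[s_{2k+1},s_{2k+2})$, so by definition $\Phi^S(f)(t) = a$ and also $\Psi^S(\zeta)(t) = a$ for every $\zeta \in D^\infty$, in particular for $\zeta = E^S(f)$. The two functions therefore agree pointwise, which proves the lemma.

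I do not foresee any genuine obstacle: the only thing to be careful about is the index shift between the ``odd'' labels $2k+1,2k+2$ used by $\Phi^S$ and $\Psi^S$ and the labels $2k-1,2k$ used by $E^S$, plus the convention $s_{2k} - s_{2k-1} = 0$ when $s_{2k-1} = +\infty$, which makes the first branch of $e_k^S$ vacuous and is automatically consistent with the fact that then no $t$ falls in $[s_{2k-1},s_{2k})$ either.
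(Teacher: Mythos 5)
Your verification is correct: the case split on whether $t$ lies in an even interval, together with the index shift $e^S_{k+1}$ versus $[s_{2k+1},s_{2k+2})$, is exactly the routine check the paper leaves implicit when it declares the identity ``plain'' from the definitions and states the lemma without proof. Nothing further is needed.
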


Lemmas~\ref{lemma:continuity-truncation} and~\ref{lemma:limit-truncation} are the most important continuity results on these operators.

\begin{lemma} [Continuity of the truncation map] \label{lemma:continuity-truncation}
	Let $S_n, S \in \Scal$ and $f_n, f \in D$. If $S_n \to S$, $f_n \to f$ and the sequence $(\Phi^{S_n}(f_n))$ is relatively compact, then $\Phi^{S_n}(f_n) \to \Phi^S(f)$.
\end{lemma}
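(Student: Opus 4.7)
The plan is to use the relative compactness assumption together with a standard subsequence argument: to show $\Phi^{S_n}(f_n) \to \Phi^S(f)$, it suffices to prove that every $J_1$-accumulation point of the sequence equals $\Phi^S(f)$. I would therefore extract an arbitrary subsequence along which $\Phi^{S_n}(f_n) \to g$ for some $g \in D$, and aim to show $g = \Phi^S(f)$. Since both are càdlàg, by right-continuity it is enough to establish the equality on a dense subset of $[0, \infty)$.

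To this end, I would introduce the set
\[
A := \big\{ t \geq 0 : t \text{ is a continuity point of both } g \text{ and } f,\ \text{and } t \neq s_k \text{ for every } k \text{ with } s_k < \infty \big\}.
\]
The discontinuity sets of $g$ and $f$ are countable and $\{s_k : s_k < \infty\}$ is countable, so $A$ is co-countable, hence dense in $[0, \infty)$. For a fixed $t \in A$, let $k^*$ be the unique index with $s_{k^*} < t < s_{k^* + 1}$, where $s_{k^* + 1}$ may be $+\infty$. Coordinatewise convergence $S_n \to S$ yields $s_{n, k^*} \to s_{k^*}$ and $s_{n, k^* + 1} \to s_{k^* + 1}$; combining with the strict inequalities gives $s_{n, k^*} \leq t < s_{n, k^* + 1}$ for $n$ large. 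Since the intervals $[s_{n, j}, s_{n, j+1})$ partition $[0, \infty)$, this uniquely identifies $t$ as lying in the $k^*$-th interval of $S_n$ as well. From the definition of the truncation operator, $\Phi^{S_n}(f_n)(t)$ therefore equals $f_n(t)$ if $k^*$ is odd and $a$ if $k^*$ is even, and similarly $\Phi^S(f)(t)$ equals $f(t)$ or $a$ under the same dichotomy.

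To conclude, I would invoke the standard property that $J_1$ convergence implies pointwise convergence at continuity points of the limit: $f_n(t) \to f(t)$ since $f_n \to f$ and $t$ is a continuity point of $f$, and $\Phi^{S_n}(f_n)(t) \to g(t)$ since $\Phi^{S_n}(f_n) \to g$ and $t$ is a continuity point of $g$. Putting these together, $g(t) = \Phi^S(f)(t)$ for all $t \in A$, and right-continuity extends this equality to all of $[0, \infty)$. The most delicate point of the argument is pinning down, for large $n$, the interval of $S_n$ in which $t$ eventually lies: this is where the exclusion $t \neq s_k$ in the definition of $A$ is crucial, since it prevents $t$ from being caught on the boundary of, or inside, a degenerate (possibly empty) interval of the limiting subdivision $S$, and thereby guarantees that the parity of the interval containing $t$ is preserved in the limit.
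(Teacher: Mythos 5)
Your proposal is correct and follows essentially the same route as the paper's proof: pass to a convergent subsequence using relative compactness, compare the limit with $\Phi^S(f)$ at points $t$ that are continuity points of both $f$ and the accumulation point and avoid the subdivision points of $S$, locate $t$ in the same (parity of) interval of $S_n$ for large $n$, and conclude on a dense set by right-continuity. No gaps.
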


\begin{proof}
	Write $S = (s_k)$ and $S_n = (s_{n,k})$, let $\phi$ be any accumulation point of the relatively compact sequence $(\Phi^{S_n}(f_n))$ and assume without loss of generality that $\Phi^{S_n}(f_n) \to \phi$. Consider $t \geq 0$ such that $t \notin S$ and both $\phi$ and $f$ are continuous at $t$; in particular $\Phi^{S_n}(f_n)(t) \to \phi(t)$ and $f_n(t) \to f(t)$. Let $k \geq 0$ such that $s_{k} < t < s_{k+1}$ and consider $n$ large enough such that $s_{n,k} < t < s_{n,k+1}$ ($S_n \to S$ implies that $s_{n,k} \to s_k$ for every $k$). If $k$ is even then we have $\Phi^{S_n}(f_n)(t) = a = \Phi^S(f)(t)$ while if $k$ is odd then we have $\Phi^{S_n}(f_n)(t) = f_n(t) \to f(t) = \Phi^S(f)(t)$. This proves $\Phi^{S_n}(f_n)(t) \to \Phi^S(f)(t)$ and since $\Phi^{S_n}(f_n)(t) \to \phi(t)$ this shows that $\phi$ and $\Phi^S(f)$ coincide on a dense subset of $[0,\infty)$. Since they are c\`adl\`ag they must be equal everywhere, hence the result.
\end{proof}

\begin{rem}
	The sequence $(\Phi^{S_n}(f_n))$ is not necessarily relatively compact under the assumptions of the previous lemma, as can be seen from the example $f(t) = 1 + \indicator{1 \leq t < 2}$, $f_n(t) = 1 + \indicator{1+1/n \leq t < 2-1/n}$ and $S_n = S = (0,1,2,+\infty,\ldots)$ (where $a = 0$).
\end{rem}

We now want to prove Lemma~\ref{lemma:limit-truncation}, which considers the convergence of the sequence $(\Phi^{S_n}(f_n))$ when the two sequences $(S_n)$ and $E^{S_n}(f_n)$ converge. To this end we need a preliminary result on the concatenation map $\Ccal: D \times [0,\infty) \times D \to D$ defined for $f, h \in D$ and $s, t \geq 0$ by
\[ \Ccal(f,t,h)(s) = \begin{cases}
	f(s) & \text{ if } s < t,\\
	h(s-t) & \text{ if } s \geq t.
\end{cases} \]

\begin{lemma} [Continuity of the concatenation map] \label{lemma:continuity-concatenation}
	If $f_n, h_n, f, h \in D$ and $t_n, t > 0$ are such that $t_n \to t$, $f_n \to f$, $h_n \to h$ and $\Delta f_n(t_n) \to \Delta f(t)$, then $\Ccal(f_n, t_n, h_n) \to \Ccal(f,t,h)$.
\end{lemma}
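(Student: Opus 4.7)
The plan is to use the characterization of $J_1$ convergence via uniform time-changes in $\Lambda_\infty$. I would pick $(\mu_n) \in \Lambda_\infty$ with $v_m(f_n \circ \mu_n, f) \to 0$ for every $m$ in some discrete unbounded set, and similarly $(\nu_n) \in \Lambda_\infty$ witnessing $h_n \to h$. The goal is to build a single time-change $\lambda_n \in \Lambda$ realizing $\Ccal(f_n,t_n,h_n) \to \Ccal(f,t,h)$, by gluing $\mu_n$ on the left of $t$ and $\nu_n$ shifted by $t_n$ on the right. For the glued map to send $t$ to $t_n$ and thereby align the split points of the two concatenations, one must first adjust $\mu_n$.

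The first (and main) step is to modify $\mu_n$ into $\tilde{\mu}_n \in \Lambda$ such that $\tilde{\mu}_n(t) = t_n$ while preserving $v_m(f_n \circ \tilde{\mu}_n, f) \to 0$. I would choose a sequence $\delta_n \downarrow 0$ slowly enough that $|\mu_n(t) - t_n|$ is much smaller than $\delta_n$, and define $\tilde{\mu}_n$ to coincide with $\mu_n$ outside $[t-\delta_n, t+\delta_n]$ and to be piecewise linear on this interval, passing through $(t, t_n)$. Then $\norm{\tilde{\mu}_n - \Id}_\infty \to 0$ is immediate. The delicate point is the new uniform convergence: outside the perturbation zone it is unchanged, while inside $[t-\delta_n, t+\delta_n]$ the c\`adl\`ag property of $f$ reduces matters to showing $f_n(\tilde{\mu}_n(s)) \approx f(t-)$ for $s \in [t-\delta_n, t)$ and $f_n(\tilde{\mu}_n(s)) \approx f(t)$ for $s \in [t, t+\delta_n]$. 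These in turn follow from the separate convergences $f_n(t_n-) \to f(t-)$ and $f_n(t_n) \to f(t)$, which is precisely where the hypothesis $\Delta f_n(t_n) \to \Delta f(t)$ is used in an essential way: combined with $f_n \to f$, it pins down the behavior of $f_n$ on both sides of its jump at $t_n$. I expect this step to be the main obstacle of the proof; without the jump matching, the perturbation could sweep $\tilde{\mu}_n(s)$ across the jump of $f_n$ at $t_n$ and produce a non-vanishing error.

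Once $\tilde{\mu}_n$ is in hand, set $\lambda_n(s) = \tilde{\mu}_n(s)$ for $s \leq t$ and $\lambda_n(s) = t_n + \nu_n(s-t)$ for $s \geq t$. Both formulas take value $t_n$ at $s=t$, so $\lambda_n$ is continuous, strictly increasing, and unbounded with $\lambda_n(0)=0$, hence $\lambda_n \in \Lambda$. A direct estimate gives
\[ \norm{\lambda_n - \Id}_\infty \leq \norm{\tilde{\mu}_n - \Id}_\infty + |t_n - t| + \norm{\nu_n - \Id}_\infty \to 0. \]
To conclude, for $m$ in the common continuity set, split the supremum $v_m(\Ccal(f_n,t_n,h_n) \circ \lambda_n, \Ccal(f,t,h))$ at $t$: on $[0,t)$ we have $\tilde{\mu}_n(s) < t_n$, so $\Ccal(f_n,t_n,h_n) \circ \lambda_n = f_n \circ \tilde{\mu}_n$ and $\Ccal(f,t,h) = f$, controlled by the first step; on $[t,m]$ we have $\Ccal(f_n,t_n,h_n)(\lambda_n(s)) = h_n(\nu_n(s-t))$ and $\Ccal(f,t,h)(s) = h(s-t)$, controlled by $v_{m-t}(h_n \circ \nu_n, h) \to 0$. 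Combining these estimates yields $\Ccal(f_n,t_n,h_n) \to \Ccal(f,t,h)$ in $J_1$ and completes the proof.
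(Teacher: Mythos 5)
Your proposal is correct and follows essentially the same route as the paper: arrange the time change for $f_n$ so that it sends $t$ to $t_n$ (the step where the jump-matching hypothesis $\Delta f_n(t_n)\to\Delta f(t)$ enters, which the paper handles by citing Proposition~VI.2.1 of Jacod--Shiryaev in the discontinuous case), then glue it with the shifted time change for $h_n$ and estimate the two halves of the supremum separately. Your explicit piecewise-linear local modification of $\mu_n$ is just a spelled-out version of the paper's ``we can assume without loss of generality that $\lambda_n(t)=t_n$.''
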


\begin{proof}
	Let $(\lambda_n), (\mu_n) \in \Lambda_\infty$ such that $v_m(f_n \circ \lambda_n, f) \vee v_m(h_n \circ \mu_n, h) \to 0$ for every $m \in M$ with $M$ some discrete, unbounded subset of $[0,\infty)$. If $f$ is discontinuous at $t$, then the assumption $\Delta f_n(t_n) \to \Delta f(t)$ implies that $\lambda_n(t) = t_n$ for $n$ large enough, see Proposition~VI.$2.1$ in Jacod and Shiryaev~\cite{Jacod03:0}. Else, $f$ is continuous at $t$ and by modifying $\lambda_n$ locally we can assume without loss of generality that $\lambda_n(t) = t_n$. In either case, we can assume without loss of generality that $\lambda_n(t) = t_n$ for every $n \geq 1$. Consider now
	\[ \nu_n(s) = \begin{cases}
		\lambda_n(s) & \text{ if } s < t,\\
		\mu_n(s-t) + t_n & \text{ if } s \geq t.
	\end{cases} \]
	
	Then $\nu_n$ lies in $\Lambda$. Indeed, $\nu_n$ is continuous and strictly increasing in each interval $[0,t)$ and $[t,\infty)$ so it only has to be checked that it is continuous at $t$, which follows from the facts that $\nu_n$ is c\`adl\`ag by construction with
	\[ \nu_n(t-) = \lambda_n(t) = t_n = \mu_n(0) + t_n = \nu_n(t). \]
	Moreover, the triangular inequality yields
	\[ \norm{\nu_n - \Id}_\infty \leq \norm{\lambda_n - \Id}_\infty + \norm{\mu_n - \Id}_\infty + |t - t_n| \]
	and so $(\nu_n) \in \Lambda_\infty$. For any $0 \leq s \leq m$, we have by definition of $\Ccal$ and $\nu_n$
	\[ v \left( \Ccal(f_n, t_n, h_n)(\nu_n(s)), \Ccal(f, t, h)(s) \right) = \begin{cases}
		v \left( f_n(\lambda_n(s)), f(s) \right) & \text{ if } s < t,\\
		v \left( h_n(\mu_n(s-t)), h(s-t) \right) & \text{ if } s \geq t
	\end{cases} \]
	and so
	\[ v_m \left( \Ccal(f_n, t_n, h_n) \circ \nu_n, \Ccal(f, t, h) \right) \leq v_m \left(f_n \circ \lambda_n, f \right) \vee v_m \left( h_n \circ \mu_n, h \right) \]
	which gives $\Ccal(f_n, t_n, h_n) \to \Ccal(f, t, h)$ by considering $m \in M$.
\end{proof}

\begin{lemma} [Limit of the truncation map] \label{lemma:limit-truncation}
	Let $S_n \in \Scal$, $S \in \Scal_+$ and $f_n, \zeta_k \in D$. If $S_n \to S$ and $E^{S_n}(f_n) \to \zeta = (\zeta_k, k \geq 1)$, then $\Phi^{S_n}(f_n) \to \Psi^{S}(\zeta)$.
\end{lemma}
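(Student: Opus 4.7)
The plan is to combine Lemma~\ref{lemma:relation-operators} with an induction driven by Lemma~\ref{lemma:continuity-concatenation}. Setting $\zeta_n := E^{S_n}(f_n)$, the identity $\Phi^{S_n}(f_n) = \Psi^{S_n}(\zeta_n)$ reduces the task to proving $\Psi^{S_n}(\zeta_n) \to \Psi^{S}(\zeta)$; in other words the lemma is really a statement about joint continuity of the patching map $(S,\zeta) \mapsto \Psi^S(\zeta)$ at a strict subdivision $S$.

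For each $K\ge 0$, I would introduce the truncated paths $Y^{(K)}_n$ and $Y^{(K)}$ obtained from $\Psi^{S_n}(\zeta_n)$ and $\Psi^S(\zeta)$ by freezing them at $a$ after $s_{n,2K}$ and $s_{2K}$ respectively. A preliminary observation is that since $\zeta_{n,K}$ is identically $a$ on $[s_{n,2K}-s_{n,2K-1},\infty)$ by definition of $E^{S_n}$, passing to the $J_1$ limit yields the analogous property for $\zeta_K$ on $[s_{2K}-s_{2K-1},\infty)$. Together with the strictness of $S$ (so that $s_{2K-1}>s_{2K-2}$), this gives the recursive decompositions
\[ Y^{(K)}_n = \Ccal\bigl(Y^{(K-1)}_n, s_{n,2K-1}, \zeta_{n,K}\bigr), \qquad Y^{(K)} = \Ccal\bigl(Y^{(K-1)}, s_{2K-1}, \zeta_K\bigr). \]
I would then prove $Y^{(K)}_n \to Y^{(K)}$ by induction on $K$, the base case $K=0$ being trivial since both paths are $\zero$. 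For the inductive step I apply Lemma~\ref{lemma:continuity-concatenation}: three of its four hypotheses are delivered by the inductive hypothesis, by $\zeta_{n,K} \to \zeta_K$ (componentwise from $\zeta_n \to \zeta$), and by $s_{n,2K-1} \to s_{2K-1}$ (from $S_n \to S$), while the jump condition is handled separately below.

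To pass from the truncated convergences to the full statement, I would use that since $S$ is a subdivision we have $s_{2K} \to \infty$ as $K\to\infty$. For any $m$ in a discrete unbounded subset of $[0,\infty)\setminus\{s_k:k\ge 0\}$, one can pick $K$ with $s_{2K}>m$; then $\Psi^S(\zeta)=Y^{(K)}$ on $[0,m]$ and, for all $n$ large enough (since $s_{n,2K}\to s_{2K}>m$), also $\Psi^{S_n}(\zeta_n)=Y^{(K)}_n$ on $[0,m]$. The $J_1$ convergence $Y^{(K)}_n \to Y^{(K)}$ therefore transfers to $J_1$ convergence of the restrictions to $[0,m]$, and a standard diagonal extraction over the $m$'s yields the desired $J_1$ convergence on $[0,\infty)$.

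The hard part will be verifying the jump condition $\Delta Y^{(K-1)}_n(s_{n,2K-1}) \to \Delta Y^{(K-1)}(s_{2K-1})$ in the inductive step. The strictness of $S$ gives $s_{2K-1}>s_{2K-2}$, and since $Y^{(K-1)}$ is identically $a$ on $[s_{2K-2},\infty)$ by construction, it is $a$ in a left-neighborhood of $s_{2K-1}$, so $\Delta Y^{(K-1)}(s_{2K-1})=0$. The convergence $S_n \to S$ then forces $s_{n,2K-1}>s_{n,2K-2}$ for $n$ large enough, and since $Y^{(K-1)}_n$ is identically $a$ on $[s_{n,2K-2},\infty)$ this gives $\Delta Y^{(K-1)}_n(s_{n,2K-1})=0$ eventually, closing the inductive step and hence the argument.
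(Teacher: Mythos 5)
Your argument is correct and follows essentially the same route as the paper's proof: an induction over the blocks of the subdivision in which each step is an application of Lemma~\ref{lemma:continuity-concatenation}, with the strictness of $S$ guaranteeing that the concatenation point lies in a flat stretch of the limit so that the jump condition holds trivially. The only (cosmetic) difference is that the paper cuts at the midpoints $s'_K=(s_{2K}+s_{2K+1})/2$ of the flat intervals and performs two concatenations per inductive step, whereas you cut at $s_{2K}$ and concatenate once, verifying directly that the left limit there is $a$.
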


\begin{proof}
	We treat the case where $|S| = +\infty$, the other case can be treated similarly. Write $S = (s_k)$ and $S_n = (s_{n,k})$ and define $s'_k = (s_{2k} + s_{2k+1}) / 2$ for $k \geq 0$ (which is finite by the assumption $|S| = +\infty$ on $S$). For $K \geq 0$ let
	\[ \phi_{n,K}(t) = \Phi^{S_n}(f_n)(t \wedge s'_K) \ \text{ and } \ \psi_K(t) = \Psi^{S}(\zeta)(t \wedge s'_K), \ t \geq 0. \]
	
	Since $s'_K \to +\infty$ as $K \to +\infty$, the result $\Phi^{S_n}(f_n) \to \Psi^S(\zeta)$ will be proved if we can prove that $\phi_{n,K} \to \psi_K$ for every $K \geq 0$. We prove this by induction on $K \geq 0$. The result for $K = 0$ is trivial, since $\psi_{0} = \zero$ and $\phi_{n,0} = \zero$ for $n$ large enough, so assume that $K \geq 1$. First, note that by definition we have $\Psi^S(f)(t) = a$ for $s_{2K} \leq t < s_{2K+1}$ and so $\psi_K$ is continuous at $s'_K$ (here we use that $s_{2K} < s_{2K+1}$ because $S$ is assumed to be a strict subdivision). Consider $n$ large enough such that $s_{n,2K} < s'_K < s_{n,2K+1}$, so that
	\[ \phi_{n,K+1} = \Ccal \left( \phi_{n,K}, s'_K, \Ccal \left( \zero, s_{n,2K+1} - s'_K, e^{S_n}_{K+1}(f_n) \right) \right). \]
	
	Since $\zero$ is continuous we have $\Ccal(\zero, s_{n,2K+1} - s'_K, e^{S_n}_{K}(f_n)) \to \Ccal(\zero, s_{2K+1} - s'_K, \zeta_K)$ by Lemma~\ref{lemma:continuity-concatenation}. By induction hypothesis we have $\phi_{n,K} \to \psi_K$ and since $\psi_K$ is continuous at $s'_K$ we obtain that $\phi_{n,K+1} \to \Ccal(\psi_K, s'_K, \Ccal(\zero, s_{2K+1} - s'_K, \zeta_{K+1}))$ again by Lemma~\ref{lemma:continuity-concatenation}. By construction this last process is equal to $\psi_{K+1}$, hence the result.
\end{proof}

\subsection{Proof of Theorem~\ref{thm:main}} \label{sub:proof}

In the rest of this subsection we assume that the assumptions of Theorem~\ref{thm:main} hold. Moreover, we assume without loss of generality (by considering a subset thereof) that the set $C$ appearing in the statement of the theorem is countable; this allows to simplify the statement of Lemma~\ref{lemma:thinning}.

Fix some $\varepsilon \in \set$ and let $N_\varepsilon \in \{0, \ldots, +\infty\}$ be the number of $\varepsilon$-big excursions of $X$. By regeneration, we may consider a sequence $((\widetilde g_{\varepsilon, k}, e_{\varepsilon, k}), k \geq 1)$ of i.i.d.\ pairs with common distribution $(g_\varepsilon, e_\varepsilon)(X)$ such that $e_{\varepsilon, k}$ for $1 \leq k \leq N_\varepsilon$ is the $k$th big excursion of $X$ with left endpoint $g_{\varepsilon, k} = \widetilde g_{\varepsilon, 1} + \cdots + \widetilde g_{\varepsilon, k}$ and right endpoint $d_{\varepsilon, k} = g_{\varepsilon, k} + T(e_{\varepsilon, k})$. Consider then the $[0,\infty]$-valued sequence $\Pi_\varepsilon = (0, g_{\varepsilon, 1}, d_{\varepsilon, 1}, \ldots)$, so that $\Pi_\varepsilon \in \Scal_+$ since $\Ncal$ has infinite mass. Define finally the sequence $E_\varepsilon = (e_{\varepsilon, k}, k \geq 1)$: then the two sequences $E_\varepsilon$ and $E^{\Pi_\varepsilon}(X)$ coincide until the first infinite excursion of $X$ and in particular $\Phi^{\Pi_\varepsilon}(X) = \Psi^{\Pi_\varepsilon}(E_\varepsilon) = \Phi_\varepsilon(X)$.

Since $X_n$ is also regenerative, we can do the same construction and consider $N_\varepsilon^n$ the number of big excursions of $X_n$ and a sequence $((\widetilde g^n_{\varepsilon, k}, e^n_{\varepsilon, k}), k \geq 1)$ of i.i.d.\ pairs with common distribution $(g_\varepsilon, e_\varepsilon)(X_n)$ such that $e^n_{\varepsilon, k}$ for $1 \leq k \leq N^n_\varepsilon$ is the $k$th big excursion of $X_n$ with left endpoint $g^n_{\varepsilon, k} = \widetilde g^n_{\varepsilon, 1} + \cdots + \widetilde g^n_{\varepsilon, k}$ and right endpoint $d^n_{\varepsilon, k} = g^n_{\varepsilon, k} + T(e^n_{\varepsilon, k})$. Consider then the $[0,\infty]$-valued sequence $\Pi^n_\varepsilon = (0, g^n_{\varepsilon, 1}, d^n_{\varepsilon, 1}, \ldots) \in \Scal$ (since $X_n$ is not assumed to have infinite mass $\Pi^n_\varepsilon$ may fail to be a strict subdivision by having two big excursions following one another) and define $E^n_\varepsilon = (e^n_{\varepsilon, k}, k \geq 1)$, so that $\Phi^{\Pi^n_\varepsilon}(X_n) = \Psi^{\Pi^n_\varepsilon}(E^n_\varepsilon) = \Phi_\varepsilon(X_n)$.

\begin{lemma}[Continuity of thinning] \label{lemma:thinning}
	We have $\big( (\Pi^n_\varepsilon, E^n_\varepsilon), \varepsilon \in \set \big) \Rightarrow \big( (\Pi_\varepsilon, E_\varepsilon), \varepsilon \in \set \big)$.
\end{lemma}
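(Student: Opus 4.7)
Since $C$ has been taken countable, it suffices to show joint convergence in distribution on every finite-dimensional projection $\{\varepsilon_1 < \cdots < \varepsilon_K\} \subset C$: marginal tightness (which will follow from the single-$\varepsilon$ case of the argument) combines with Tychonoff's theorem to yield weak convergence in the countable product topology. Fix such a finite subset. The key observation is that, since $\varepsilon_j > \varepsilon_1$ for $j \geq 2$, the $\varepsilon_j$-big excursions of $X$ are exactly the sub-sequence of $\varepsilon_1$-big excursions whose $\varphi$-value exceeds $\varepsilon_j$, with the corresponding $\Pi$-sequence obtained by summing the $\tilde g$-coordinates along the filtered indices. Coupling the i.i.d.\ extensions consistently across levels (for instance, by extending $X_n$ via a fresh independent copy of itself and re-running the construction on the concatenated path), this expresses $((\Pi^n_{\varepsilon_j}, E^n_{\varepsilon_j}))_{j=1}^{K}$ as the deterministic image under a measurable \emph{filter-and-sum} map $F$ of the single enriched i.i.d.\ sequence at the base level $\varepsilon_1$; on the limit side no extensions are needed since $\Ncal$ has infinite mass, so the identity holds automatically.

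Assumption~\eqref{eq:cond} at $\varepsilon = \varepsilon_1$ gives that the first enriched triple $(g_{\varepsilon_1}, e_{\varepsilon_1}, g_{\varepsilon_1} + T \circ e_{\varepsilon_1}, \varphi \circ e_{\varepsilon_1})(X_n)$ converges in distribution to the corresponding quantity for $X$. By regeneration, these enriched triples form an i.i.d.\ sequence in $k \geq 1$, so convergence of a single marginal forces convergence of the full sequence in the product topology:
\[ \bigl( (\tilde g^n_{\varepsilon_1, k}, e^n_{\varepsilon_1, k}, \tilde d^n_{\varepsilon_1, k}, \varphi(e^n_{\varepsilon_1, k})), k \geq 1 \bigr) \Rightarrow \bigl( (\tilde g_{\varepsilon_1, k}, e_{\varepsilon_1, k}, \tilde d_{\varepsilon_1, k}, \varphi(e_{\varepsilon_1, k})), k \geq 1 \bigr). \]
The map $F$ is continuous at any enriched sequence whose $\varphi$-coordinates avoid the thresholds $\varepsilon_2, \ldots, \varepsilon_K$: under this condition small perturbations do not move any coordinate across a threshold, so the filter indices are stable, and partial sums along a fixed set of indices are jointly continuous. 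Since $\Ncal(\varphi = \varepsilon_j) = 0$ for every $j$ by hypothesis, the set of discontinuities of $F$ is a null set under the limit law, and the continuous mapping theorem yields the desired finite-dimensional joint convergence.

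The main obstacle, in my view, is pinning down the continuity of $F$ precisely on the complement of the $\Ncal$-null set $\{\varphi \in \{\varepsilon_2, \ldots, \varepsilon_K\}\}$; this is exactly where the hypothesis $\Ncal(\varphi = \varepsilon) = 0$ on $C$ gets used. A secondary technical point — easily handled once one notices it — is to arrange the i.i.d.\ extensions beyond $N^n_{\varepsilon_j}$ so that the filtering identity linking $(\Pi^n_{\varepsilon_j}, E^n_{\varepsilon_j})$ to the level-$\varepsilon_1$ sequence holds almost surely for $X_n$ and not only for $X$.
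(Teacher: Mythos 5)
Your proposal is correct and follows essentially the same route as the paper: reduce to finite subsets of $\set$, express the higher-threshold data as a filtering of the base-level enriched i.i.d.\ sequence (whose convergence follows from~\eqref{eq:cond} by regeneration), and use $\Ncal(\varphi=\varepsilon_j)=0$ to make the filter indices stable so that the continuous mapping theorem applies. The paper merely packages the same idea slightly differently, introducing the renewal process of filter indices explicitly and identifying its accumulation points via Skorohod's representation rather than phrasing everything as a.s.\ continuity of a single filter-and-sum map.
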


\begin{proof}
	Let $\set'$ be any finite subset of $\set$: since $\set$ has been assumed to be countable, to prove the result it is enough to show that $((\Pi^n_\varepsilon, E^n_\varepsilon), \varepsilon \in \set') \Rightarrow ((\Pi_\varepsilon, E_\varepsilon), \varepsilon \in \set')$. Consider the case where $\set' = \{ \varepsilon_0 < \varepsilon_1 \} \subset \set$, the general case following similarly by induction. For $i = 0,1$ and $k \geq 1$, let $q_{i, k} = (g_{\varepsilon_i, k}, e_{\varepsilon_i, k}, d_{\varepsilon_i, k}, \varphi(e_{\varepsilon_i, k}))$, $q^n_{i, k} = (g^n_{\varepsilon_i, k}, e^n_{\varepsilon_i, k}, d^n_{\varepsilon_i, k}, \varphi(e^n_{\varepsilon_i, k}))$, $Q^n_i = (q^n_{i, k}, k \geq 1)$ and $Q_i = (q_{i, k}, k \geq 1)$: we will show that $(Q^n_0, Q^n_1) \Rightarrow (Q_0, Q_1)$, which clearly implies the desired result. It follows readily from~\eqref{eq:cond} that $Q_i^n \Rightarrow Q_i$ for $i = 0, 1$, so we only have to prove that these two convergences hold jointly.

	Since $\varepsilon_0 < \varepsilon_1$, an excursion which is $\varepsilon_1$-big is also $\varepsilon_0$-big and so $Q_1^n$ is a subsequence of $Q_1^n$, and $Q_1$ a subsequence of $Q_0$. More explicitly, if $u^n_k$, resp.\ $u_k$, is the index of the $k$th element of the sequence $E^n_{\varepsilon_0}$, resp.\ $E_{\varepsilon_0}$, which is $\varepsilon_1$-big, then we have $q^n_{1, k} = q^n_{0, u^n_k}$ and $q_{1, k} = q_{0,u_k}$.
	
	In particular, $U_n = (u^n_k, k \geq 1)$ is a renewal process with step distribution the geometric random variable with parameter $\P(\varphi(e_{\varepsilon_0}(X_n)) > \varepsilon_1)$. Similarly, $U = (u_k, k \geq 1)$ is a renewal process with geometric step distribution with parameter $\P(\varphi(e_{\varepsilon_0}(X)) > \varepsilon_1)$. Since by assumption, $\varphi(e_{\varepsilon_0}(X_n)) \Rightarrow \varphi(e_{\varepsilon_0}(X))$ and
	\[ \P\left( \varphi(e_{\varepsilon_0}(X)) = \varepsilon_1 \right) = \Ncal(\varphi = \varepsilon_1 \, | \, \varphi > \varepsilon_0) = 0 \]
	because $\varepsilon_1 \in \set$, we get $\P(\varphi(e_{\varepsilon_0}(X_n)) > \varepsilon_1) \to \P(\varphi(e_{\varepsilon_0}(X)) > \varepsilon_1)$ and so $U_n \Rightarrow U$. We now show that the joint convergence $(Q^n_0, U_n) \Rightarrow (Q_0, U)$ holds, which will conclude the proof since $Q^n_1 = f(Q^n_0, U_n)$ and $Q_1 = f(Q_0, U)$ for some deterministic and continuous map $f$.
	
	We show that $(Q^n_0, u^n_1) \Rightarrow (Q_0, u_1)$, the general case following similarly since $U_n$ and $U$ are renewal processes. Since $Q^n_0 \Rightarrow Q_0$ and $u^n_1 \Rightarrow u_1$ the sequence $(Q^n_0, u^n_1)$ is tight. Let $(Q', u')$ be any accumulation point and assume without loss of generality, using Skorohod's representation theorem, that $(Q^n_0, u^n_1) \to (Q', u')$. In particular, $Q'$ is equal in distribution to $Q_0$ and $u'$ to $u_1$. Since $u^n_1 \to u'$ and since these are integer-valued we have $u^n_1 = u'$ for all $n$ large enough. Let $Q' = (q'_k, k \geq 1)$ with $q'_k = (\gamma_k, \zeta_k, \delta_k, \phi_k)$, in particular it holds that $\varphi(e^n_{\varepsilon, k}) \to \phi_k$. Consider $n$ such that $u^n_1 = u'$: then $\varphi(e^n_{\varepsilon, k}) \leq \varepsilon_1$ for $k < u'$ and $\varphi(e^n_{\varepsilon, u'}) > \varepsilon_1$ which implies, letting $n \to +\infty$, that $\phi_k \leq \varepsilon_1$ for $k < u'$ while $\phi_{u'} \geq \varepsilon_1$. But $\phi_{u'} = \varepsilon_1$ does almost surely not occur since by choice $\Ncal(\varphi=\varepsilon_1\,)=0$, so that $\phi_{u'} > \varepsilon_1$ and $u' = \inf\{ k \geq 1: \phi_k > \varepsilon_1 \}$. This proves that $(Q', u')$ is equal in distribution to $(Q_0, u_1)$ and ends the proof.
\end{proof}

\begin{rem}
	Lemma~\ref{lemma:thinning} is the only place in the proof of Theorem~\ref{thm:main} where the assumption $\varphi(e_\varepsilon(X_n)) \Rightarrow \varphi(e_\varepsilon(X))$ is needed.
\end{rem}

We now conclude the proof of Theorem~\ref{thm:main}. Let $Y$ be any accumulation point of the tight sequence $(X_n)$, and assume without loss of generality that $X_n \Rightarrow Y$. Denote $\Pi^n = (\Pi^n_\varepsilon, \varepsilon \in \set)$, $E^n = (E^n_\varepsilon, \varepsilon \in \set)$, $\Pi = (\Pi_\varepsilon, \varepsilon \in \set)$ and $E = (E_\varepsilon, \varepsilon \in \set)$. The previous lemma shows that $(\Pi^n, E^n) \Rightarrow (\Pi, E)$ and so the sequence $(X_n, \Pi^n, E^n)$ is tight. Let $(Y', \Pi', E')$ be any accumulation point, so that $Y'$ is equal in distribution to $Y$ and $(\Pi', E')$ to $(\Pi, E)$, and assume without loss of generality using Skorohod's embedding theorem that the almost sure convergence $(X_n, \Pi^n, E^n) \to (Y', \Pi', E')$ holds.

Writing $\Pi' = (\Pi'_\varepsilon, \varepsilon \in \set)$ and $E' = (E'_\varepsilon, \varepsilon \in \set)$, Lemma~\ref{lemma:limit-truncation} implies (since $\Pi_\varepsilon' \in \Scal_+$) that $\Phi^{\Pi^n_\varepsilon}(X_n) \to \Psi^{\Pi_\varepsilon'}(E_\varepsilon')$. Hence the sequence $(\Phi^{\Pi^n_\varepsilon}(X_n))$ is relatively compact and so Lemma~\ref{lemma:continuity-truncation} implies that $\Phi^{\Pi^n_\varepsilon}(X_n) \to \Phi^{\Pi_\varepsilon'}(Y')$. In particular, $\Phi^{\Pi_\varepsilon'}(Y') = \Psi^{\Pi_\varepsilon'}(E_\varepsilon')$ for every $\varepsilon \in \set$: since $0 \in \overline \set$ we now want to let $\varepsilon \to 0$ (while in $\set$).

Since by construction, $\Psi^{\Pi_\varepsilon}(E_\varepsilon) = \Phi_\varepsilon(X)$ (cf.\ the beginning of the proof) and $(\Pi', E')$ is equal in distribution to $(\Pi_\varepsilon, E_\varepsilon, \varepsilon \in \set)$, we see that the family $(\Psi^{\Pi_\varepsilon'}(E_\varepsilon'), \varepsilon \in \set)$ is equal in distribution to $(\Phi_\varepsilon(X), \varepsilon \in \set)$. Lemma~\ref{lemma:Phi} shows that $\Phi_\varepsilon(X) \to X$ as $\varepsilon \to 0$, which ensures the existence of a c\`adl\`ag process $X'$, defined on the same probability space as $Y'$, $\Pi'$ and $E'$ and such that $X'$ is equal in distribution $X$ and $\Psi^{\Pi'_\varepsilon}(E'_\varepsilon) \to X'$ as $\varepsilon \to 0$. Then, by construction and Lemma~\ref{lemma:relation-operators}, we see that $\Psi^{\Pi'_\varepsilon}(E'_\varepsilon) = \Phi_\varepsilon(X')$, and in particular $\Phi^{\Pi'_\varepsilon}(Y') = \Phi_\varepsilon(X')$.
	
We now prove that $Y'=X'$, which will prove that $Y$ and $X$ are equal in distribution and will conclude the proof. Consider $t \geq 0$ in an open excursion interval of $X'$. By definition of $\Phi_\varepsilon$, for $\varepsilon\in \set$ smaller than the size of the excursion of $X'$ straddling $t$, we have $\Phi_\varepsilon(X')(t)=X'(t)\not=a$. Since $\Phi^{\Pi'_\varepsilon}(Y') = \Phi_\varepsilon(X')$ we obtain that $\Phi^{\Pi'_\varepsilon}(Y')(t)= Y'(t)=X'(t)$ for $\varepsilon$ small enough (by definition, if $\Phi^S(f)(t) \neq a$ then $\Phi^{S}(f)(t) = f(t)$). Since $\Ncal$ is infinite and $X'$ is equal in distribution to $X$, the zero set of $X'$ has empty interior, so that the union $R$ of all open excursion intervals is dense in $[0,\infty)$. We have just proved that $Y'$ and $X'$ coincide on $R$, since they are c\`adl\`ag they must coincide everywhere. The proof of Theorem~\ref{thm:main} is complete.

\section{Checking assumptions of Theorem \ref{thm:main}} \label{sec:extensions}

\subsection{Joint convergence} 

\label{sub:joint-conv}

Theorem~\ref{thm:main} requires the joint convergence~\eqref{eq:cond} of the four sequences $g_\varepsilon(X_n)$, $e_\varepsilon(X_n)$, $T(e_\varepsilon(X_n))$ and $\varphi(e_\varepsilon(X_n))$. Thanks to forthcoming Lemma \ref{lemma:joint-conv}, this joint convergence follows automatically from the convergence of the three individual sequences $g_\varepsilon(X_n)$, $(e_\varepsilon, \varphi \circ e_\varepsilon)(X_n)$ and of $T(e_\varepsilon(X_n))$. Sufficient conditions for the convergence of $g_\varepsilon(X_n)$ are provided in forthcoming Lemma~\ref{lem:conv g} and we informally discuss after Lemma~\ref{lemma:joint-conv} the convergence of $\varphi \circ e_\varepsilon$.

\begin{lemma} \label{lemma:liminf}
	If $e_n \to e$ with $e_n \in \Ecal$ and $e \in D$, then $T(e) \leq \liminf_n T(e_n)$.
\end{lemma}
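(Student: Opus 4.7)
The plan is a contradiction argument based on the fact that $J_1$-convergence forces pointwise convergence at continuity points of the limit. Set $L := \liminf_n T(e_n)$. The inequality is trivial when $L = +\infty$, so I assume $L < +\infty$ and suppose, for contradiction, that $T(e) > L$. Extract a subsequence, still denoted $(e_n)$ for simplicity, along which $T(e_n) \to L$.

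Next I choose a good test point. Since $e$ is càdlàg its set of discontinuities is at most countable, so the interval $(L, T(e))$ (non-empty by assumption, with the convention that $(L, +\infty)$ is used when $T(e) = +\infty$) contains a continuity point $t$ of $e$. On the one hand, $t > 0$ and $t < T(e) = \inf\{s > 0 : e(s) = a\}$ imply $e(t) \neq a$. On the other hand, eventually $T(e_n) < t$ along the subsequence, and since each $e_n$ is an excursion, $e_n(s) = a$ for every finite $s \geq T(e_n)$; in particular $e_n(t) = a$ for $n$ large.

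The remaining step is to combine these two facts via the $J_1$-convergence $e_n \to e$. Using a sequence $(\lambda_n) \in \Lambda_\infty$ realizing the convergence in the sense of Subsection~\ref{sub:notation}, I have $\lambda_n(t) \to t$, so $\lambda_n(t) > T(e_n)$ eventually, whence $e_n(\lambda_n(t)) = a$. Since $t$ is a continuity point of $e$, a standard application of the reversed triangular inequality gives
\[ v(e_n(\lambda_n(t)), e(t)) \leq v_{t+1}(e_n \circ \lambda_n, e) + v(e(t), e(\lambda_n(t))) \longrightarrow 0, \]
so $e(t) = \lim_n e_n(\lambda_n(t)) = a$, contradicting $e(t) \neq a$.

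The argument has no real obstacle: the only non-trivial ingredient is the elementary fact that $J_1$-convergence implies pointwise convergence at continuity points of the limit, which is already implicit in the setup of Subsection~\ref{sub:notation}. The essence is that being an excursion is a closed condition under $J_1$-convergence on the right tail, which forces the hitting time to be lower semicontinuous.
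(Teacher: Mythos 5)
Your proof is correct and follows essentially the same route as the paper's: evaluate at a continuity point $t$ of $e$ just beyond $\liminf_n T(e_n)$, use that an excursion equals $a$ after its length together with pointwise $J_1$-convergence at continuity points to get $e(t)=a$, hence $T(e)\leq t$. The only difference is cosmetic (you argue by contradiction on $t\in(L,T(e))$ whereas the paper lets $t\downarrow L$ directly), and your explicit time-change estimate is a valid, if slightly redundant, justification of the pointwise convergence step.
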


\begin{proof}
	The result holds if $\liminf_n T(e_n) = +\infty$, so assume that $\liminf_n T(e_n) < +\infty$. Let $t > \liminf_n T(e_n)$ be a continuity point of $e$ and $u_n$ be a subsequence such that $T(e_{u_n}) \to \liminf_n T(e_n)$. Then $t > T(e_{u_n})$ for $n$ large enough and for those $n$ we have $e_{u_n}(t) = a$. Since in addition $e_{u_n} \to e$ and $e$ is continuous at $t$, we get $e(t) = a$ and in particular, $T(e) \leq t$. Letting $t \to \liminf_n T(e_n)$ gives the result.
\end{proof}

\begin{lemma} \label{lemma:joint-conv}
	Let $\varepsilon > 0$. If the three convergences $g_\varepsilon(X_n) \Rightarrow g_\varepsilon(X)$, $T(e_\varepsilon(X_n)) \Rightarrow T(e_\varepsilon(X))$ and $(e_\varepsilon, \varphi\circ e_\varepsilon)(X_n) \Rightarrow (e_\varepsilon, \varphi\circ e_\varepsilon)(X)$ hold, then~\eqref{eq:cond} holds.
\end{lemma}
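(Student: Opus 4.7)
The strategy is to decouple the time coordinate $g_\varepsilon$ from the excursion triple via regeneration, and then to turn the lower semi-continuity of $T$ (Lemma~\ref{lemma:liminf}) into an equality using the marginal convergence of $T\circ e_\varepsilon$. I would first invoke the regenerative property of both $X_n$ and $X$ to argue that $g_\varepsilon$ is independent of $(e_\varepsilon, T\circ e_\varepsilon, \varphi\circ e_\varepsilon)$ under each of these laws: $g_\varepsilon$ is a functional of the $\varepsilon$-small excursions that precede the first $\varepsilon$-big one, while the first $\varepsilon$-big excursion is itself a fresh, independent draw from the conditioned excursion measure (this is implicit in the i.i.d.\ construction used in Subsection~\ref{sub:proof}). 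Since weak convergence of two independent sequences to independent limits is equivalent to joint weak convergence, the lemma reduces, using the given marginal convergence of $g_\varepsilon(X_n)$, to showing
\[ (e_\varepsilon, T\circ e_\varepsilon, \varphi\circ e_\varepsilon)(X_n) \Rightarrow (e_\varepsilon, T\circ e_\varepsilon, \varphi\circ e_\varepsilon)(X). \]

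For this triple convergence I would argue as follows. The three hypothesized convergences make the joint sequence tight. Extract any subsequential limit $(e', T', \varphi')$ and invoke Skorokhod's representation theorem to assume the convergence is almost sure. Lemma~\ref{lemma:liminf} gives $T(e') \leq T'$ a.s., while the hypothesized marginal convergence of $T\circ e_\varepsilon$ together with the measurability of $T$ yield $T' \stackrel{d}{=} T(e_\varepsilon(X)) \stackrel{d}{=} T(e')$, the last identity using $e' \stackrel{d}{=} e_\varepsilon(X)$. A short rigidity step -- for $[0,\infty]$-valued variables, $U \leq V$ a.s.\ together with $U \stackrel{d}{=} V$ forces $U = V$ a.s., as seen by computing $\E[f(V) - f(U)] = 0$ for any bounded strictly increasing $f$ -- then upgrades the inequality to $T' = T(e')$ a.s. Hence the subsequential limit is a measurable image of $(e', \varphi')$, and since $(e', \varphi') \stackrel{d}{=} (e_\varepsilon(X), \varphi(e_\varepsilon(X)))$ by hypothesis, the triple has the desired law.

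The main obstacle is precisely that $T$ is only lower semi-continuous on $D$ for $J_1$, so joint convergence of $(e_\varepsilon, T\circ e_\varepsilon)$ cannot be extracted from convergence of $e_\varepsilon$ alone; the role of the separate hypothesis $T(e_\varepsilon(X_n)) \Rightarrow T(e_\varepsilon(X))$ is exactly to close this gap through the rigidity argument above. The coordinate $g_\varepsilon$ is then absorbed painlessly by the independence inherited from regeneration, and no continuity of $\varphi$ is required since the convergence of $\varphi\circ e_\varepsilon$ is already assumed jointly with $e_\varepsilon$.
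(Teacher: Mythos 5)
Your proposal is correct and follows essentially the same route as the paper: decoupling $g_\varepsilon$ via regeneration, establishing tightness of the triple from the hypothesized marginals, and identifying accumulation points by combining Lemma~\ref{lemma:liminf} with the equality in distribution of $\tau$ and $T(e)$. The only difference is cosmetic: you spell out the rigidity step ($U \le V$ a.s.\ and $U \stackrel{d}{=} V$ imply $U = V$ a.s.) that the paper leaves implicit.
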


\begin{proof}
	Assume that the three individual convergences hold. Since $X_n$ is regenerative, $g_\varepsilon(X_n)$ and $(e_\varepsilon, T\circ e_\varepsilon, \varphi\circ e_\varepsilon)(X_n)$ are independent, and the same holds for $X$, it is sufficient to prove that $(e_\varepsilon, T\circ e_\varepsilon, \varphi\circ e_\varepsilon)(X_n) \Rightarrow (e_\varepsilon, T\circ e_\varepsilon, \varphi\circ e_\varepsilon)(X)$. Since $(e_\varepsilon, \varphi\circ e_\varepsilon)(X_n) \Rightarrow (e_\varepsilon, \varphi\circ e_\varepsilon)(X)$ and $T(e_\varepsilon(X_n)) \Rightarrow T(e_\varepsilon(X))$, the joint sequence $((e_\varepsilon, T\circ e_\varepsilon, \varphi\circ e_\varepsilon)(X_n))$ is tight and we only have to identify accumulation points. So let $(e, \tau, \phi)$ be any accumulation point and assume without loss of generality that $(e_\varepsilon, T\circ e_\varepsilon, \varphi\circ e_\varepsilon)(X_n) \Rightarrow (e, \tau, \phi)$, where $\phi=\varphi(e)$: the result will be proved if we can show that $\tau = T(e)$.
	
	First, note that $\tau$ is equal in distribution to $T(e)$. Indeed, since projections are continuous $e$ is equal in distribution to $e_\varepsilon(X)$ and $\tau$ is equal in distribution to $T(e_\varepsilon(X))$. Second, note that the continuous mapping theorem together with Lemma~\ref{lemma:liminf} implies that $T(e) \leq \tau$. Hence, since $\tau$ and $T(e)$ are equal in distribution they must be equal almost surely. This proves the result.
\end{proof}

Unfortunately, there seems to be no general recipe to prove the joint convergence of $e_\varepsilon(X_n)$ and $\varphi(e_\varepsilon(X_n))$. Nonetheless, there are many natural examples where the convergence of $(e_\varepsilon, T \circ e_\varepsilon)(X_n)$ (which automatically holds when each individual sequence $e_\varepsilon(X_n)$ and $T(e_\varepsilon(X_n))$ converges) helps controlling $\varphi(e_\varepsilon(X_n))$. For instance, if $f_n \to f$ the convergence $v_\infty(f_n) \to v_\infty(f)$ needs not hold. Nonetheless, it holds if $f_n \to f$ and $T(f_n) \to T(f)$. Thus in probabilistic terms, if $\varphi = v_\infty$ then the two convergences $e_\varepsilon(X_n) \Rightarrow e_\varepsilon(X)$ and $T(e_\varepsilon(X_n)) \Rightarrow T(e_\varepsilon(X))$ imply $(e_\varepsilon, \varphi \circ e_\varepsilon)(X_n) \Rightarrow (e_\varepsilon, \varphi \circ e_\varepsilon)(X)$. Similar remarks apply for instance to maps $\varphi$ of the additive form $e \in \Ecal \mapsto \int_0^{T(e)} f(v(e(u))) du$.

\subsection{Tightness} \label{sub:tightness}

The weak convergence assumption~\eqref{eq:cond} of Theorem~\ref{thm:main} does not imply the tightness of the sequence $(X_n)$. Consider for instance $X_n$ obtained by modifying a Brownian motion $B$ where one replaces excursions of $B$ with length $\tau \in [1/n,2/n]$ by a deterministic triangle with height $n$ and basis~$\tau$. Intuitively, this example shows that when big excursions converge, one is essentially left with the problem of controlling the height of small excursions. Indeed, tightness of a sequence of c\`adl\`ag paths is essentially concerned with controlling oscillations, and for small excursions we do not lose much by upper bounding their oscillations by their height.

We now introduce some notation necessary in order to study tightness. For $f \in D$ and $m, \delta > 0$, let
\[ w_m'(f, \delta) = \inf_{(I_k)} \ \max_k \, \sup_{x,y \in I_k} v(f(x), f(y)) \]
where the infimum extends over all partitions of the interval $[0,m)$ into subintervals $I_k = [s,t)$ such that $t-s > \delta$ when $t < m$. Then the sequence $X_n$ is tight if and only if the sequence $(X_n(t))$ is tight (in $V$) for every $t$ in a dense subset of $[0,\infty)$ and for every $m$ and $\eta > 0$
\[
	\lim_{\delta \to 0} \limsup_{n \to +\infty} \P \left( w'_m(X_n, \delta) \geq \eta \right) = 0,
\]
see for instance Jacod and Shiryaev~\cite{Jacod03:0}. Tightness criteria in $V$ depend on $V$. We will discuss the most important case $V = \R^d$ for some $d \geq 1$. In this case, $(X_n(t))$ is tight for every $t$ in a dense subset of $[0,\infty)$ if and only if for every $m \geq 0$,
\[
	\lim_{b \to +\infty} \limsup_{n \to +\infty} \P \left( \norm{X_n}_m^d \geq b \right) = 0,
\]
where from now on, $\norm{f}_m^d$ for $f: t \in [0,\infty) \mapsto (f_k(t), 1 \leq k \leq d) \in \R^d$ is defined by $\norm{f}_m^d = \max_{1 \leq k \leq d} \norm{f_k}_m$.

In the sequel, we will also discuss C-tightness: a sequence of processes is C-tight if it is tight and every accumulation point is almost surely continuous. Necessary and sufficient conditions for $X_n$ to be C-tight are the same as for tightness, replacing $w'$ by the modulus of continuity $w$ defined for $f \in D$ and $m, \varepsilon > 0$ by
\[ w_m(f, \delta) = \sup \left\{ v(f(t), f(s)) : 0 \leq s, t \leq m, |t-s| \leq \delta \right\}, \]
see for instance Jacod and Shiryaev~\cite{Jacod03:0}. In the sequel we consider the truncation operator $\overline \Phi_\varepsilon$, which truncates $\varepsilon$-big excursions to $a$ (remember the definition~\eqref{eq:Phi} of $\Phi_\varepsilon$):
\[ 	\overline \Phi_\varepsilon(f)(t) = \begin{cases}
	f(t) & \text{ if } \varphi(e^{st}(f,t)) \leq \varepsilon,\\
	a & \text{ else.}
\end{cases}
\]

Note that $f(t)$ is either equal to $\Phi_\varepsilon(f)(t)$ or to $\overline \Phi_\varepsilon(f)(t)$.

\begin{lemma} \label{lemma:tightness-weak}
	Consider some set $\set \subset (0,\infty)$ such that $0 \in \overline \set$ and assume that for every $\varepsilon \in \set$, the three sequences $(g_\varepsilon(X_n))$, $(e_\varepsilon(X_n))$ and $(T(e_\varepsilon(X_n)))$ are tight and every accumulation point $(\gamma_\varepsilon, \tau_\varepsilon) \in [0,\infty]^2$ of the sequence $((g_\varepsilon, T \circ e_\varepsilon)(X_n))$ satisfies $\P(\gamma_\varepsilon = 0) = \P(\tau_\varepsilon = 0) = 0$. Then for any $m$ and $\eta > 0$,
	\begin{equation} \label{eq:bound-tightness-1}
		\lim_{\delta \to 0} \limsup_{n \to +\infty} \P \left( w'_m(X_n, \delta) \geq 4\eta \right) \leq \lim_{\varepsilon \to 0} \limsup_{n \to +\infty} \P \left( v_m(\overline \Phi_\varepsilon(X_n)) \geq \eta \right).
	\end{equation}

	If in addition, for each $\varepsilon \in \set$ the sequence $(e_\varepsilon(X_n))$ is C-tight and any of its accumulation point $\zeta_\varepsilon$ satisfies $\P(\zeta_\varepsilon(0) = a) = 1$, then $w'$ in the left hand side of~\eqref{eq:bound-tightness-1} can be replaced by $w$.
	
	Finally, in the particular case $V = \R^d$, $a = 0$ and $\varphi = \norm{\cdot}_\infty^d$, the above assumptions imply that $(X_n)$ is tight, and even C-tight if $(e_\varepsilon(X_n))$ is C-tight and all its accumulation points almost surely start at $a$.
\end{lemma}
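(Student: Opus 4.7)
The plan is to fix $\eta, m > 0$ and $\varepsilon \in \set$, and on a suitable good event to exhibit a partition of $[0,m)$ into cells of length $>\delta$ with oscillation bounded by $4\eta$; the only failure mode not absorbed into vanishing terms will be $\{v_m(\overline{\Phi}_\varepsilon(X_n)) \geq \eta\}$, which yields the right-hand side of~\eqref{eq:bound-tightness-1} after sending $\varepsilon\to 0$. The underlying decomposition is that $X_n(t)$ equals $\Phi_\varepsilon(X_n)(t)$ when $t$ lies in an $\varepsilon$-big excursion interval and $\overline{\Phi}_\varepsilon(X_n)(t)$ otherwise, so oscillations of $X_n$ on any interval free of big excursions are at most $2 v_m(\overline{\Phi}_\varepsilon(X_n))$.

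Let $[g^n_{\varepsilon,k}, d^n_{\varepsilon,k})$, $k=1,\dots,K_n$, enumerate the $\varepsilon$-big excursion intervals of $X_n$ meeting $[0,m]$ (finitely many as $X_n\in D_\varphi$). I would take the partition consisting of the ``gap'' intervals $[0,g^n_{\varepsilon,1})$, $[d^n_{\varepsilon,k},g^n_{\varepsilon,k+1})$, $[d^n_{\varepsilon,K_n},m)$, together with, inside each ``excursion'' interval $[g^n_{\varepsilon,k},d^n_{\varepsilon,k})$, a $w'$-refinement applied to $e^n_{\varepsilon,k}$ on $[0,T(e^n_{\varepsilon,k}))$; using the half-open interval avoids the terminal jump from $e^n_{\varepsilon,k}(T-)$ back to $a$. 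Oscillations on the gaps are bounded by $2v_m(\overline{\Phi}_\varepsilon(X_n))<2\eta$, and inside excursions are bounded by $w'(e^n_{\varepsilon,k},\delta)$. All cells have length $>\delta$ provided every gap and every excursion length exceeds $\delta$.

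The probabilistic control of this good event is where the hypotheses enter. The regenerative structure makes gaps and excursion lengths i.i.d.\ copies of $g_\varepsilon(X_n)$ and $T\circ e_\varepsilon(X_n)$ respectively; tightness together with the non-degeneracy $\P(\gamma_\varepsilon=0)=\P(\tau_\varepsilon=0)=0$ at every accumulation point yields $K_n$ bounded by some $K=K(\rho,\varepsilon)$ with probability $\geq 1-\rho$ uniformly in $n$, via the standard fact that a tight sequence of positive random variables whose limit points do not charge $\{0\}$ satisfies $\limsup_n \P(Y_n \leq \delta)\to 0$ as $\delta\to 0$. Applied conditionally on $\{K_n\leq K\}$ to the at most $K$ gaps and the $K$ lengths and combined by union bound, this delivers $\{\text{all gaps and lengths} >\delta\}$ with arbitrarily high probability. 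Tightness of $(e_\varepsilon(X_n))$ in the $J_1$ topology yields an analogous union bound for $\max_{k\leq K} w'(e^n_{\varepsilon,k},\delta)<\eta$. Intersecting these with $\{v_m(\overline{\Phi}_\varepsilon(X_n))<\eta\}$ provides the required pathwise bound, the numerical factor $4\eta$ absorbing the bookkeeping.

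The main obstacle I foresee is precisely this interplay: the random $K_n$ must be dominated by a deterministic $K$ before one can apply a union bound over $K$ i.i.d.\ copies of the gap and length distributions, all while carrying the uniformity in $n$ through a Portmanteau-type argument at $0$. For the C-tightness refinement, the additional assumption $\P(\zeta_\varepsilon(0)=a)=1$ together with C-tightness of $(e_\varepsilon(X_n))$ implies that with high probability each $e^n_{\varepsilon,k}$ has continuity modulus $w(\cdot,\delta)<\eta$ and takes values within $\eta$ of $a$ near both endpoints (continuous limits force $\zeta_\varepsilon(T-)=a$), so oscillations across cell boundaries stay of order $\eta$ and $w'$ can be replaced by $w$. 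Finally, when $V=\R^d$, $a=0$ and $\varphi=\norm{\cdot}_\infty^d$, the inequality $v_m(\overline{\Phi}_\varepsilon(X_n))\leq\varepsilon$ is deterministic, so the right-hand side of~\eqref{eq:bound-tightness-1} vanishes; the marginal criterion $\limsup_n\P(\norm{X_n}^d_m\geq b)\to 0$ follows from $\norm{X_n}^d_m\leq\varepsilon\vee\max_{k\leq K_n}\norm{e^n_{\varepsilon,k}}^d_\infty$ combined with $K_n\leq K$ w.h.p.\ and the tightness of the big excursions.
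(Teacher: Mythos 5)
The paper's proof takes a materially different route from yours, and there is a specific gap in your partition construction that the paper's route is designed to avoid.

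The paper first establishes tightness (resp.\ C-tightness) of $(\Phi_\varepsilon(X_n))$ for fixed $\varepsilon \in \set$: from any subsequence one extracts a sub-subsequence along which $(g_\varepsilon, e_\varepsilon, T\circ e_\varepsilon)(X_n)$ converges, builds the i.i.d.\ random subdivision $S$ and the patched process $\Psi^S(\zeta)$, and invokes Lemma~\ref{lemma:limit-truncation} to conclude $\Phi_\varepsilon(X_{z_n})\Rightarrow\Psi^S(\zeta)$. The hypotheses $\P(\gamma_\varepsilon=0)=\P(\tau_\varepsilon=0)=0$ enter only to guarantee $S\in\Scal_+$, which is what Lemma~\ref{lemma:limit-truncation} requires. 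Once tightness of $(\Phi_\varepsilon(X_n))$ is in hand, the bound~\eqref{eq:bound-tightness-1} is read off from the pathwise inequality $w'_m(f,\delta)\le w'_m(\Phi_\varepsilon(f),\delta)+2\,v_m(\overline\Phi_\varepsilon(f))$, which follows from the uniform proximity $v_m(\Phi_\varepsilon(f),f)\le v_m(\overline\Phi_\varepsilon(f))$. In particular, one never builds a $\delta$-partition for $X_n$ by hand; one transfers the (abstractly guaranteed) partition for $\Phi_\varepsilon(X_n)$.

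Your proposal does try to build the $\delta$-partition by hand, and this is where the gap lies. Inside the $k$-th big excursion interval $[g^n_{\varepsilon,k},d^n_{\varepsilon,k})$ you place ``a $w'$-refinement of $e^n_{\varepsilon,k}$ on $[0,T(e^n_{\varepsilon,k}))$.'' But $w'_T$ permits the \emph{last} cell of that refinement to have length $\le\delta$ (that is precisely the point of the $w'$ modulus), whereas once this cell is transplanted into $[0,m)$ it ends at $d^n_{\varepsilon,k}<m$ and therefore must have length $>\delta$. There is no cheap repair: merging the short last cell with the following gap absorbs the terminal jump $v(e(T-),a)$, which you correctly identified as a quantity to be avoided and which is not small in general; merging it backwards with the preceding cell can absorb an internal jump of $e$ at the partition boundary, which again need not be small. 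More generally, ``$w'_T(e,\delta)<\eta$'' and ``$[0,T)$ admits a partition into cells of length $>\delta$, all with oscillation $<\eta$'' are genuinely different statements for c\`adl\`ag paths, and it is the second one that your patching needs. This is exactly the obstruction that the paper sidesteps by proving tightness of $(\Phi_\varepsilon(X_n))$ through Lemma~\ref{lemma:limit-truncation} rather than through an explicit partition, and by then comparing $X_n$ to $\Phi_\varepsilon(X_n)$ in sup norm. The rest of your sketch — the union bound over a deterministically bounded number $K$ of big excursions, the treatment of the first/last gap, the argument for replacing $w'$ by $w$ under C-tightness of the excursions, and the $\R^d$ case via $v_m(\overline\Phi_\varepsilon)\le\varepsilon$ plus control of $\norm{X_n}_m^d$ — is sound in outline and in places parallels what the paper does (e.g.\ the bound on $K_n$ is essentially the $R_{\alpha,n}$ estimate in Lemma~\ref{lemma:overline-phi}), but the partition step needs to be replaced by the paper's two-step argument.
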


\begin{proof}
	Let $\varepsilon \in \set$: first we prove that $(\Phi_\varepsilon(X_n))$ is tight, and even C-tight if $(e_\varepsilon(X_n))$ is C-tight with $\P(\zeta_\varepsilon(0) = a) = 1$ for every accumulation point $\zeta_\varepsilon$. Let $(u_n)$ be some subsequence, we must find a subsequence $(z_n)$ of $(u_n)$ such that $(\Phi_{\varepsilon}(X_{z_n}))$ converges weakly. By assumption, the sequence $((g_\varepsilon, e_\varepsilon, T \circ e_\varepsilon)(X_n), n \geq 1)$ is tight, so there exists $(z_n)$ a subsequence of $(u_n)$ such that $(g_\varepsilon, e_\varepsilon, T \circ e_\varepsilon)(X_{z_n}) \Rightarrow (\gamma_\varepsilon, \zeta_\varepsilon, \tau_\varepsilon)$ for some random variable $(\gamma_\varepsilon, \zeta_\varepsilon, \tau_\varepsilon) \in [0,\infty] \times D \times [0,\infty]$ with $\P(\gamma_\varepsilon = 0) = \P(\tau_\varepsilon = 0) = 0$. Let $((\gamma_{\varepsilon,k}, \zeta_{\varepsilon, k}, \delta_{\varepsilon, k}), k \geq 1)$ be an i.i.d.\ sequence with common distribution $(\gamma_\varepsilon, \zeta_\varepsilon, \gamma_\varepsilon + \tau_\varepsilon)$, $\zeta = (\zeta_{\varepsilon, k}, k \geq 1)$ and $S = (s_k)$ be the subdivision defined recursively by $s_{2k+1} = s_{2k} + \gamma_{\varepsilon, k+1}$ and $s_{2k+2} = s_{2k+1} + \tau_{\varepsilon, k+1}$. Since $\P(\gamma_\varepsilon = 0) = \P(\tau_\varepsilon = 0) = 0$, $S$ is almost surely a strict subdivision, and hence, proceeding similarly as in the proof of Theorem~\ref{thm:main}, it can be proved that $\Phi_\varepsilon(X_{z_n}) \Rightarrow \Psi^{S}(\zeta)$. This proves tightness.
	
	Assume now in addition that $\zeta_\varepsilon$ is almost surely continuous with $\zeta_\varepsilon(0) = a$: we prove that $(\Phi_\varepsilon(X_n))$ is actually C-tight by proving that $\Psi^S(\zeta)$ is continuous. Neglecting a set of zero measure, we can assume that every $\zeta_{\varepsilon, k}$ is continuous and starts at $a$, in which case it is plain from its definition that $\Psi^{S}(\zeta)$ is continuous if $\zeta_{\varepsilon, k}(\tau_{\varepsilon, k}) = a$ for each $k \geq 1$. For this it is enough to show that $\P(\zeta_\varepsilon(\tau_\varepsilon) = a) = 1$. Assume almost sure convergence $(e_\varepsilon, T \circ e_\varepsilon)(X_{z_n}) \to (\zeta_\varepsilon, \tau_\varepsilon)$, so that Lemma~\ref{lemma:liminf} implies that $T(\zeta_\varepsilon) \leq \tau_\varepsilon$. If equality holds, then we can use $e_\varepsilon(X_{z_n})(T(e_\varepsilon(X_{z_n}))) \to \zeta_\varepsilon(\tau_\varepsilon)$ (which holds since $\zeta_\varepsilon$ is continuous) to deduce that $\zeta_\varepsilon(\tau_\varepsilon) = a$. If strict inequality holds then we can use $e_\varepsilon(X_{z_n})(\tau_\varepsilon) \to \zeta_\varepsilon(\tau_\varepsilon)$ to deduce that $\zeta_\varepsilon(\tau_\varepsilon) = a$. In either case we have $\zeta_\varepsilon(\tau_\varepsilon) = a$, hence the result.
	\\
	
	We now prove~\eqref{eq:bound-tightness-1}. Let $f \in D$. For any $x,y \geq 0$, we have
	\[ v \left( f(x), f(y) \right) \leq v \left( \Phi_\varepsilon(f)(x), \Phi_\varepsilon(f)(y) \right) + v \left( f(x), \Phi_\varepsilon(f)(x) \right) + v \left( f(y), \Phi_\varepsilon(f)(y) \right). \]
	
	Moreover, it is plain from the definitions of $\Phi_\varepsilon$ and $\overline \Phi_\varepsilon$ that $v_m(\Phi_\varepsilon(f), f) \leq v_m(\overline \Phi_\varepsilon(f))$ and so
	\[ v \left( f(x), f(y) \right) \leq v \left( \Phi_\varepsilon(f)(x), \Phi_\varepsilon(f)(y) \right) + 2 v_m(\overline \Phi_\varepsilon(f)) \]
	for all $0 \leq x,y \leq m$, from which it readily follows that 
	\[ w_m'(f, \delta) \leq w_m'(\Phi_\varepsilon(f), \delta) + 2 v_m(\overline \Phi_\varepsilon(f)). \]
	
	This gives
	\[ \P \left( w_m'(X_n, \delta) \geq 4\eta \right) \leq \P \left( w_m'(\Phi_\varepsilon(X_n), \delta) \geq 2\eta \right) + \P \left( v_m(\overline \Phi_\varepsilon(X_n)) \geq \eta \right). \]
	Since $(\Phi_\varepsilon(X_n))$ is tight, letting first $n \to +\infty$, then $\delta \to 0$ and finally $\varepsilon \to 0$ gives~\eqref{eq:bound-tightness-1}. When $(\Phi_\varepsilon(X_n))$ is C-tight one can derive similar inequalities with $w$ instead of $w'$.
	
	Consider now the case $V = \R^d$, $a = 0$ and $\varphi = v_\infty = \norm{\cdot}_\infty^d$. Then by definition we obtain $v_m(\overline \Phi_\varepsilon(f)) \leq \varepsilon$ since $\overline \Phi_\varepsilon$ truncates all excursions with height larger than $\varepsilon$ and in particular, $\P( \norm{\overline \Phi_\varepsilon(X_n)}_m^d \geq \eta) = 0$ for every $\varepsilon < \eta$. Hence the right hand side of~\eqref{eq:bound-tightness-1} is equal to $0$ and to show that $(X_n)$ is tight it remains to control its supremum. By definition we have $\P(\norm{X_n}_m^d \geq b) = \P(\norm{\Phi_{1}(X_n)}_m^d \geq b)$  for $b \geq 1$ and since $(\Phi_1(X_n))$ is tight we obtain
	\[
		\lim_{b \to +\infty} \limsup_{n \to +\infty} \P \left( \norm{X_n}_m^d \geq b \right) = 0
	\]
	which proves the tightness of $(X_n)$. C-tightness results follow similarly.
\end{proof}

Combining Lemmas~\ref{lemma:joint-conv} and~\ref{lemma:tightness-weak} we obtain the following simple version of Theorem~\ref{thm:main}.

\begin{theorem} \label{thm:main-2}
	Consider the case where $V = \R^d$, $a = 0$ and $\varphi = \norm{\cdot}_\infty^d$ and let $\set \subset (0,\infty)$ such that $0 \in \overline \set$ and $\Ncal(\varphi = \varepsilon)=0$  for all $\varepsilon\in \set$. If $g_\varepsilon(X_n) \Rightarrow g_\varepsilon(X)$, $e_\varepsilon(X_n) \Rightarrow e_\varepsilon(X)$ and $T(e_\varepsilon(X_n)) \Rightarrow T(e_\varepsilon(X))$ for every $\varepsilon \in \set$, then $X_n \Rightarrow X$.
\end{theorem}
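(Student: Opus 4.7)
The plan is to apply Theorem~\ref{thm:main}, for which we must verify both the tightness of $(X_n)$ and the joint convergence~\eqref{eq:cond}. Both conclusions will be obtained almost for free from the three hypothesized marginal convergences together with Lemmas~\ref{lemma:joint-conv} and~\ref{lemma:tightness-weak}, exploiting the fact that in the present setting $\varphi = \norm{\cdot}_\infty^d$ coincides with $v_\infty$ (for the sup-norm metric on $\R^d$). Note also that the hypothesis $\Ncal(\varphi = \varepsilon) = 0$ for $\varepsilon \in \set$ needed by Theorem~\ref{thm:main} is already part of the assumption.

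To produce~\eqref{eq:cond}, the remark following Lemma~\ref{lemma:joint-conv} applies directly: since $\varphi = v_\infty$, the two marginal convergences $e_\varepsilon(X_n) \Rightarrow e_\varepsilon(X)$ and $T(e_\varepsilon(X_n)) \Rightarrow T(e_\varepsilon(X))$ together yield $(e_\varepsilon, \varphi \circ e_\varepsilon)(X_n) \Rightarrow (e_\varepsilon, \varphi \circ e_\varepsilon)(X)$. Combining this with the hypothesized convergence $g_\varepsilon(X_n) \Rightarrow g_\varepsilon(X)$ and invoking Lemma~\ref{lemma:joint-conv} immediately gives~\eqref{eq:cond}.

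For tightness I will invoke the final statement of Lemma~\ref{lemma:tightness-weak}, which is tailor-made for the present setting. The individual tightness of $(g_\varepsilon(X_n))$, $(e_\varepsilon(X_n))$ and $(T(e_\varepsilon(X_n)))$ is immediate from their weak convergence, so the only substantive point to check is that any accumulation point $(\gamma_\varepsilon, \tau_\varepsilon)$ of the joint sequence $((g_\varepsilon, T \circ e_\varepsilon)(X_n))$ satisfies $\P(\gamma_\varepsilon = 0) = \P(\tau_\varepsilon = 0) = 0$. By the regenerative structure of $X_n$ and of $X$, the pairs $(g_\varepsilon, T \circ e_\varepsilon)$ have independent components under each law, so the assumed marginal convergences yield a unique accumulation point $(g_\varepsilon(X), T(e_\varepsilon(X)))$ with independent coordinates. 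Non-degeneracy of the second coordinate is immediate: $\varphi(e_\varepsilon(X)) > \varepsilon > 0$ forces $e_\varepsilon(X) \neq \zero$ and hence $T(e_\varepsilon(X)) > 0$ a.s. Non-degeneracy of the first coordinate, which is the only step requiring any real care, follows from standard excursion theory: since $\Ncal$ is infinite, the inverse local time of $X$ at $a$ is a driftless subordinator, and is therefore a.s.\ strictly positive when evaluated at the independent exponential local time at which the first $\varepsilon$-big excursion appears, so $g_\varepsilon(X) > 0$ a.s. The last assertion of Lemma~\ref{lemma:tightness-weak} then yields the tightness of $(X_n)$, and Theorem~\ref{thm:main} concludes that $X_n \Rightarrow X$.
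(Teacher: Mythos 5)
Your proof is correct and follows the paper's own route exactly: the theorem is obtained there precisely by combining Lemma~\ref{lemma:joint-conv} (together with the remark that for $\varphi = v_\infty$ the joint convergence $(e_\varepsilon,\varphi\circ e_\varepsilon)(X_n)\Rightarrow(e_\varepsilon,\varphi\circ e_\varepsilon)(X)$ follows from the two marginal ones) with the final assertion of Lemma~\ref{lemma:tightness-weak}. One minor slip in your verification of the non-degeneracy hypothesis: the inverse local time of $X$ need not be driftless (the paper explicitly allows a drift $d\ge 0$ in Lemma~\ref{lem:conv g}), but the conclusion $\P(g_\varepsilon(X)=0)=0$ holds regardless, since $\Ncal$ having infinite mass forces infinitely many $\varepsilon$-small excursions, each of positive length, before the first $\varepsilon$-big one.
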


\begin{rem}
	This result suggests that the choice of the map $\varphi = \norm{\cdot}_\infty^d$ is particularly interesting, because it takes care of tightness. In Lemma~\ref{lemma:varphi'} we discuss conditions under which a control of excursions measured according to $\varphi$ can give a control on excursions measured according to $\norm{\cdot}_\infty^d$.
\end{rem}

\subsection{Conditions based on excursion measures} \label{sub:excursion-measures}

In this subsection we discuss various conditions on the excursion measures of the processes $X_n$ that guarantee that parts of the assumptions of Theorem~\ref{thm:main} hold. In the rest of this subsection, we make the following assumption, which we believe to be the most relevant one in practice.

\begin{assumption}
	In the rest of this subsection, we assume that $X_n$ has only finitely many excursions in any bounded interval, i.e., its excursion measure has finite mass.
\end{assumption}

In the rest of this subsection, we denote by $\Ncal_n$ the probability measure of excursions of $X_n$ away from $a$, i.e., the law of the process $X_n$ shifted at its first exit time of $\{a\}$ and stopped upon returning to $a$. We denote by $b_n$ the parameter of the exponentially distributed holding times at $a$. We also assume we are given a sequence $(c_n)$ of positive real numbers such that $c_n\to +\infty$ and (within this section) we define the set $\set$ as the complement of the set of atoms of $\Ncal$:
\[ \set = \left\{ \varepsilon > 0: \Ncal(\varphi = \varepsilon) = 0 \right\}. \]

In the sequel we adopt the canonical notation when dealing with excursions. We will say that~\eqref{h:varphi} is satisfied if for any $\varepsilon \in \set$,
\begin{equation} \label{h:varphi} \tag{\bf{H1}}
	\lim_{n\to+\infty}c_n\Ncal_n(\varphi >\varepsilon) = \Ncal (\varphi >\varepsilon).
\end{equation}

We will say that~\eqref{h:T+varphi} is satisfied if for any $\lambda>0$ and $\varepsilon \in \set$,
\begin{equation} \label{h:T+varphi} \tag{\bf{H2}}
	\lim_{n\to+\infty}c_n\Ncal_n\left(1-e^{-\lambda T};\varphi \le\varepsilon\right) = \Ncal\left(1-e^{-\lambda T};\varphi \le\varepsilon\right).
\end{equation}

We will say that~\eqref{h:T} is satisfied if for any $\lambda>0$
\begin{equation} \label{h:T} \tag{\bf{H3}}
	\lim_{n\to+\infty}c_n\Ncal_n\left(1-e^{- \lambda T}\right) = \Ncal\left(1-e^{-\lambda T}\right).
\end{equation}

It is easy to see (and will be established in the proof of Theorem~\ref{thm:ex}) that~\eqref{h:varphi} and~\eqref{h:T+varphi} together imply~\eqref{h:T} when we have in addition $\Ncal(\varphi = +\infty) = 0$. Moreover,~\eqref{h:varphi},~\eqref{h:T+varphi} and~\eqref{h:T} together give the convergence, for $\varepsilon \in \set$, of the two sequences $(\varphi(e_\varepsilon(X_n)))$ and $(T(e_\varepsilon(X_n)))$. Indeed, since
\[ \P \left( \varphi(e_\varepsilon(X_n)) > \delta \right) = \frac{\Ncal_n \left( \varphi > \delta \vee \varepsilon \right)}{\Ncal_n \left( \varphi > \varepsilon \right)} \ \text{ and } \ \E \left( 1 - e^{-\lambda T(e_\varepsilon(X_n))} \right) = \frac{\Ncal_n \left( 1 - e^{-\lambda T} ; \varphi > \varepsilon \right)}{\Ncal_n(\varphi > \varepsilon)} \]
we have the following elementary result.

\begin{lemma} \label{lemma:H}
	If~\eqref{h:varphi} holds, then $\varphi(e_\varepsilon(X_n)) \Rightarrow \varphi(e_\varepsilon(X))$ for every $\varepsilon \in \set$. If~\eqref{h:varphi},~\eqref{h:T+varphi} and~\eqref{h:T} hold, then $T(e_\varepsilon(X_n)) \Rightarrow T(e_\varepsilon(X))$ for every $\varepsilon \in \set$.
\end{lemma}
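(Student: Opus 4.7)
The plan is to read both convergences directly off the two elementary identities displayed immediately above the lemma, by multiplying numerators and denominators by $c_n$ and invoking the three hypotheses. The argument is essentially bookkeeping, and I do not anticipate any significant obstacle; the only small subtlety is ensuring that enough threshold values are available to play the role of continuity points, which will be handled by observing that $\set$ is cocountable.

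For the first convergence, I fix $\varepsilon \in \set$ with $\Ncal(\varphi > \varepsilon) > 0$ (the only nontrivial case, since $\Ncal$ has infinite mass and $0 \in \overline \set$) and work with the tail distribution of $\varphi(e_\varepsilon(X_n))$. For $\delta > \varepsilon$ with $\delta \in \set$, the identity
\[
\P\bigl( \varphi(e_\varepsilon(X_n)) > \delta \bigr) = \frac{c_n \Ncal_n(\varphi > \delta)}{c_n \Ncal_n(\varphi > \varepsilon)}
\]
together with~\eqref{h:varphi} applied to $\delta$ and to $\varepsilon$ yields convergence to $\Ncal(\varphi > \delta)/\Ncal(\varphi > \varepsilon) = \P(\varphi(e_\varepsilon(X)) > \delta)$. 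Since the pushforward of $\Ncal$ by $\varphi$ is finite on $[\varepsilon_0,+\infty]$ for every $\varepsilon_0 > 0$, it has at most countably many atoms, so $\set$ is cocountable in $(0,+\infty)$ and the thresholds $\delta \in \set \cap (\varepsilon,+\infty)$ cover all continuity points of the distribution function of $\varphi(e_\varepsilon(X))$ in that range. Combined with the trivial equality $\P(\varphi(e_\varepsilon(X_n)) > \delta) = 1$ for $\delta \leq \varepsilon$, this gives weak convergence of $\varphi(e_\varepsilon(X_n))$ to $\varphi(e_\varepsilon(X))$ on the compact $[0,+\infty]$.

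For the second convergence, I argue through Laplace transforms. Decomposing the numerator in the second identity displayed above the lemma,
\[
\Ncal_n\bigl(1 - e^{-\lambda T};\varphi > \varepsilon\bigr) = \Ncal_n\bigl(1 - e^{-\lambda T}\bigr) - \Ncal_n\bigl(1 - e^{-\lambda T};\varphi \leq \varepsilon\bigr),
\]
I multiply by $c_n$ and apply~\eqref{h:T} and~\eqref{h:T+varphi} to obtain $c_n \Ncal_n(1 - e^{-\lambda T};\varphi > \varepsilon) \to \Ncal(1 - e^{-\lambda T};\varphi > \varepsilon)$. Dividing by $c_n \Ncal_n(\varphi > \varepsilon)$, which converges to $\Ncal(\varphi > \varepsilon) > 0$ by~\eqref{h:varphi}, I get
\[
\E\bigl(1 - e^{-\lambda T(e_\varepsilon(X_n))}\bigr) \longrightarrow \E\bigl(1 - e^{-\lambda T(e_\varepsilon(X))}\bigr)
\]
for every $\lambda > 0$. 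Since $[0,+\infty]$ is compact and the functions $t \mapsto 1 - e^{-\lambda t}$ (with value $1$ at $+\infty$) separate points and generate by Stone--Weierstrass a dense subalgebra of the continuous functions on $[0,+\infty]$, this pointwise convergence of Laplace transforms is equivalent to weak convergence of $T(e_\varepsilon(X_n))$ to $T(e_\varepsilon(X))$ on $[0,+\infty]$, which concludes the proof.
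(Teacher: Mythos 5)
Your proof is correct and follows exactly the route the paper intends: the paper simply displays the two identities for $\P(\varphi(e_\varepsilon(X_n))>\delta)$ and $\E(1-e^{-\lambda T(e_\varepsilon(X_n))})$ and declares the lemma elementary, and your argument is the straightforward completion of that (multiplying by $c_n$, applying (H1)--(H3), and passing from tail functions resp.\ Laplace transforms to weak convergence on $[0,+\infty]$). No discrepancy worth noting.
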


These assumptions also imply the convergence of the sequence $(g_\varepsilon(X_n))$ under an additional assumption on $b_n$ and $c_n$. Note that the sequence $(g_\varepsilon(X_n))$ plays a particular role in~\eqref{eq:cond}, since by regeneration $g_\varepsilon(X_n)$ is independent of $e_\varepsilon(X_n)$, $T(e_\varepsilon(X_n))$ and $\varphi(e_\varepsilon(X_n))$ which are all direct functionals of $e_\varepsilon(X_n)$. In particular, one has to prove the convergence of $(g_\varepsilon(X_n))$ separately to check~\eqref{eq:cond}.

We stress that the excursion measure $\Ncal$ is uniquely defined only after the local time has been normalized. A local time process $(L(t), t \ge 0)$, is a nondecreasing process with values in $\R_+$, which satisfies $L(0)=0$ and for any $t>s$, $L(t)>L(s)$ if and only if there is $u\in(s,t)$ such that $X(u)=a$. It is unique up to a multiplicative constant. It is known (see Kallenberg \cite{Kallenberg02:0}) that the inverse of $L$ is a subordinator, i.e., an increasing L\'evy process. We let $d$ be the drift coefficient of this subordinator. In particular, the zero set of $X$ has positive Lebesgue measure if and only if $d>0$. 

\begin{lemma}
\label{lem:conv g}
If~\eqref{h:varphi} and~\eqref{h:T+varphi} hold, and $c_n/b_n\to d$ with $d$ as above, then $g_\varepsilon(X_n)\Rightarrow g_\varepsilon(X)$ for all $\varepsilon \in \set$.
\end{lemma}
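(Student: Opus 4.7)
The plan is to compute the Laplace transform of $g_\varepsilon(X_n)$ explicitly using the regenerative structure, pass to the limit using~\eqref{h:varphi},~\eqref{h:T+varphi} and $c_n/b_n \to d$, and match the result to the Laplace transform of $g_\varepsilon(X)$ computed from It\^o's excursion theory. Since $X_n$ has finite excursion measure, its trajectory alternates between Exp($b_n$) holding times at $a$ and excursions of law $\Ncal_n$. Writing $N_n$ for the index of the first $\varepsilon$-big excursion of $X_n$, we get
\[ g_\varepsilon(X_n) = \sum_{k=1}^{N_n} H_{n,k} + \sum_{k=1}^{N_n-1} T_{n,k}, \]
where $N_n$ is geometric of parameter $p_n := \Ncal_n(\varphi>\varepsilon)$, the $H_{n,k}$ are i.i.d.\ Exp($b_n$), the $T_{n,k}$ are i.i.d.\ distributed as $T$ under $\Ncal_n(\,\cdot\,|\,\varphi\le\varepsilon)$, and all three families are independent.

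Conditioning on $N_n$ and summing a geometric series, a routine computation yields
\[ \EX\!\left[e^{-\lambda g_\varepsilon(X_n)}\right] = \frac{p_n}{p_n + \Ncal_n(1-e^{-\lambda T};\varphi\le\varepsilon) + \lambda/b_n} = \frac{c_n p_n}{c_n p_n + c_n \Ncal_n(1-e^{-\lambda T};\varphi\le\varepsilon) + \lambda c_n/b_n}. \]
Applying~\eqref{h:varphi} to the numerator and to the first term of the denominator,~\eqref{h:T+varphi} to the second term, and the assumption $c_n/b_n\to d$ to the third, one obtains
\[ \EX\!\left[e^{-\lambda g_\varepsilon(X_n)}\right] \longrightarrow \frac{\Ncal(\varphi>\varepsilon)}{\Ncal(\varphi>\varepsilon) + \Ncal(1-e^{-\lambda T};\varphi\le\varepsilon) + \lambda d}. \]

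It remains to identify this limit with $\EX[e^{-\lambda g_\varepsilon(X)}]$. By It\^o's excursion theory, the excursions of $X$ form a Poisson point process on $(0,\infty)\times\Ecal$ of intensity $ds\otimes\Ncal$ indexed by local time $s$, and $g_\varepsilon(X) = L^{-1}(\sigma_\varepsilon-)$, where $\sigma_\varepsilon := \inf\{s>0 : \varphi(e_s)>\varepsilon\}$ is exponentially distributed with parameter $\Ncal(\varphi>\varepsilon)$ (using $\Ncal(\varphi=\varepsilon)=0$). The subordinator $L^{-1}$ has drift $d$ and L\'evy measure the push-forward of $\Ncal$ by $T$, and conditionally on $\sigma_\varepsilon=s$ the excursions strictly before form a PPP of intensity $s\,\Ncal(\,\cdot\,;\varphi\le\varepsilon)$; hence
\[ \EX\!\left[e^{-\lambda g_\varepsilon(X)} \mid \sigma_\varepsilon\right] = \exp\!\bigl(-\sigma_\varepsilon\bigl[\lambda d + \Ncal(1-e^{-\lambda T};\varphi\le\varepsilon)\bigr]\bigr), \]
and integrating over $\sigma_\varepsilon$ reproduces exactly the expression above. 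The continuity theorem for Laplace transforms of non-negative random variables then gives $g_\varepsilon(X_n)\Rightarrow g_\varepsilon(X)$.

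The main technical point is less the computation than the bookkeeping in the final identification: one must verify that the drift $d$ of the inverse local time of $X$ is compatible with the normalization implicitly imposed on $\Ncal_n$ by~\eqref{h:varphi}--\eqref{h:T+varphi} and the scaling $c_n$, which is precisely what the hypothesis $c_n/b_n\to d$ encodes (the mean holding time $1/b_n$ of $X_n$ at $a$ plays the role of the drift, and rescaling local time by $b_n/c_n$ turns the inverse local time of $X_n$ into a subordinator whose Laplace exponent converges to $d\lambda + \Ncal(1-e^{-\lambda T})$). Once this matching is transparent, no further estimate is required.
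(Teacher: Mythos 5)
Your proof is correct and follows essentially the same route as the paper: decompose $g_\varepsilon(X_n)$ into the alternating holding times and small-excursion lengths preceding the first $\varepsilon$-big excursion, compute the Laplace transform exactly, pass to the limit using \eqref{h:varphi}, \eqref{h:T+varphi} and $c_n/b_n\to d$, and match the limit with the excursion-theoretic Laplace transform of $g_\varepsilon(X)$. Your algebraic simplification of the transform is in fact slightly cleaner than the paper's (which isolates a remainder term $R_{n,\varepsilon}\to 0$), and you supply the Poisson-point-process derivation of $\E[e^{-\lambda g_\varepsilon(X)}]$ that the paper only cites as ``basic excursion theory.''
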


\begin{proof}
Since the excursion measure of $X_n$ is finite, $g_\varepsilon(X_n)$ can be written as
$$
g_\varepsilon(X_n) = H_{n,0} + \sum_{i=1}^{N_{n,\varepsilon}}(T_{n,\varepsilon,i}+H_{n,i}),
$$
where all random variables are independent, and the sum is understood to be zero when $N_{n, \varepsilon} = 0$. The $(T_{n,\varepsilon,i}, i \geq 1)$ are i.i.d.\ distributed as the lifetime $T$ under $\Ncal_n(\,\cdot \mid \varphi \le\varepsilon)$, the $(H_{n,i}, i \geq 0)$ are independent exponential random variables with common parameter $b_n$ and
 $\P(N_{n,\varepsilon}=k)=\Ncal_n(\varphi>\varepsilon)\big(\Ncal_n(\varphi\le \varepsilon)\big)^k$.
As a consequence,
$$
\E_n \big(e^{-\lambda g_\varepsilon(X_n)}\big) = \frac{\Ncal_n(\varphi>\varepsilon) \E(e^{-\lambda H_{n,1}})}{1-\Ncal_n\big(e^{-\lambda T}; \varphi\le\varepsilon\big)\, \E\big( e^{-\lambda H_{n,1}}\big)} = \frac{b_n / (\lambda + b_n)}{1 + A_{n,\varepsilon} - R_{n, \varepsilon}}
$$
where
$$
A_{n,\varepsilon}:= \frac{\lambda/b_n}{(\lambda/b_n + 1)\Ncal_n(\varphi>\varepsilon)}+ \frac{\Ncal_n\big(1-e^{-\lambda T};\varphi\le\varepsilon\big)}{\Ncal_n(\varphi>\varepsilon)}
$$
and
$$
R_{n,\varepsilon} := \frac{\lambda}{\lambda + b_n} \,\left(1+\frac{\Ncal_n\big(1-e^{-\lambda T};\varphi\le\varepsilon\big)}{\Ncal_n(\varphi>\varepsilon)}\right).
$$

Under the assumptions of the lemma, we have $b_n \to +\infty$, in particular $A_{n,\varepsilon} \to A_\varepsilon$ and $R_{n,\varepsilon} \to 0$, where
$$
A_{\varepsilon}:=\frac{d\lambda}{\Ncal(\varphi>\varepsilon)}+ \frac{\Ncal\big(1-e^{-\lambda T};\varphi\le\varepsilon\big)}{\Ncal(\varphi>\varepsilon)}.
$$

On the other hand, basic excursion theory ensures that $\E\big( e^{-\lambda g_\varepsilon(X)}\big) =(1+A_{\varepsilon})^{-1}$, so we have proved the convergence of the Laplace transform of $g_\varepsilon(X_n)$ to the Laplace transform of $g_\varepsilon(X)$ which proves the result.
\end{proof}

The previous two lemmas show how one can exploit the assumptions~\eqref{h:varphi},~\eqref{h:T+varphi} and~\eqref{h:T} to show that~\eqref{eq:cond} holds. We now investigate in view of Lemma~\ref{lemma:tightness-weak} how these assumptions can be used to show tightness. In the following lemma, $\Ncal_n(\, \cdot \, | \, T = +\infty)$ refers to the null measure when $\Ncal_n(T = +\infty) = 0$.

\begin{lemma} \label{lemma:overline-phi}
	For any $n \geq 1$, $m \geq 0$ and $\varepsilon, \eta, \lambda$ and $\alpha > 0$, it holds that
	\begin{multline} \label{eq:bound}
		\P \left( v_m(\overline \Phi_\varepsilon(X_n)) \geq \eta \right) \leq \alpha c_n \Ncal_n \left( v_\infty \geq \eta \, | \, \varphi \leq \varepsilon, T < +\infty \right)\\
		+ e^{\lambda m} \exp \left( - \frac{ \lfloor \alpha c_n \rfloor \lambda}{\lambda + b_n} - \lfloor \alpha c_n \rfloor \Ncal_n \left( 1 - e^{-\lambda T} \, | \, T < +\infty \right) \right) + \Ncal_n \left( v_m \geq \eta, \varphi \leq \varepsilon \, | \, T = +\infty \right).
	\end{multline}
\end{lemma}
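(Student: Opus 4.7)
The plan is to realise $X_n$ as the concatenation of i.i.d.\ building blocks: exponential$(b_n)$ holding times $H_0, H_1, \ldots$ at $a$ interleaved with i.i.d.\ excursions $e_1, e_2, \ldots$ of common law $\Ncal_n$, all independent. Let $K := \inf\{i \geq 1 : T(e_i) = +\infty\} \in \{1, \ldots, +\infty\}$ be the index of the first infinite excursion, so that only $e_1, \ldots, e_K$ actually appear in $X_n$, and let $\sigma_i := H_0 + \sum_{j=1}^{i-1}(T(e_j) + H_j)$ be the start time of the $i$-th excursion. The event $\{v_m(\overline \Phi_\varepsilon(X_n)) \geq \eta\}$ coincides with the event that for some $1 \leq i \leq K$ with $\sigma_i \leq m$, the excursion $e_i$ is $\varepsilon$-small and reaches distance at least $\eta$ from $a$ within the time window $[\sigma_i, m]$.

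Setting $k := \lfloor \alpha c_n \rfloor$, I split this event into three disjoint cases according to the index $i$ of the offending excursion: (A) $1 \leq i \leq k$ and $i < K$, (B) $i = K < +\infty$, and (C) $k < i < K$. For case (A), since a finite excursion's contribution to $v_m$ is bounded above by $v_\infty(e_i)$, a union bound over $i$ together with the independence of the $e_i$'s yields $k\, \Ncal_n(v_\infty \geq \eta, \varphi \leq \varepsilon, T < +\infty)$, which is at most the first term of~\eqref{eq:bound} because $\Ncal_n(\varphi \leq \varepsilon, T < +\infty) \leq 1$ and $k \leq \alpha c_n$. For case (B), summing over the (geometrically distributed) value of $K$ using the i.i.d.\ structure yields exactly $\Ncal_n(v_m \geq \eta, \varphi \leq \varepsilon \mid T = +\infty)$, the third term of~\eqref{eq:bound}; when $\Ncal_n(T = +\infty) = 0$, case (B) is empty, in line with the stated convention.

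The nontrivial piece is (C), whose occurrence forces $\{K > k\} \cap \{\sigma_{k+1} \leq m\}$. A Chernoff bound at rate $\lambda > 0$, combined with independence and identical distribution, gives
\[
\P(K > k,\; \sigma_{k+1} \leq m) \;\leq\; e^{\lambda m} \left(\frac{b_n}{\lambda + b_n}\right)^{k+1} \Ncal_n\!\left(e^{-\lambda T};\, T < +\infty\right)^k.
\]
The inequality $1 - x \leq e^{-x}$ handles the holding-time factor $(b_n/(\lambda + b_n))^k \leq \exp(-k\lambda/(\lambda+b_n))$ (with the harmless extra factor $b_n/(\lambda+b_n) \leq 1$ discarded), and the remaining step is the inequality
\[
\Ncal_n(e^{-\lambda T};\, T < +\infty) \;\leq\; \exp\!\bigl(-\Ncal_n(1 - e^{-\lambda T} \mid T < +\infty)\bigr).
\]
Writing $p = \Ncal_n(T < +\infty)$ and $q = \Ncal_n(e^{-\lambda T} \mid T < +\infty)$, this reduces to $\log p + \log q \leq q - 1$ for $p, q \in (0, 1]$, which follows from the pair $\log p \leq 0$ and $\log q \leq q - 1$. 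The main obstacle I anticipate is precisely this last step: \eqref{eq:bound} is written with the \emph{conditional} measure $\Ncal_n(\cdot \mid T < +\infty)$, whereas the Chernoff output naturally involves the \emph{sub-probability} $\Ncal_n(\cdot\,;\, T < +\infty)$; the slack $\log p \leq 0$ is exactly what is needed to absorb the mass $\Ncal_n(T < +\infty) = p$ and produce the required exponent.
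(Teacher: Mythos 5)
Your proof is correct and follows essentially the same route as the paper's: a union bound over the first $\lfloor \alpha c_n\rfloor$ excursions for the first term, a Chernoff bound on the time needed to complete that many excursions for the second, and an isolation of the (at most one) infinite excursion for the third. The only cosmetic difference is that the paper first treats the case $\Ncal_n(T=+\infty)=0$ and then reduces the general case to it via an auxiliary process with normalized excursion law $\Ncal_n(\,\cdot\,|\,T<+\infty)$, whereas you work directly with the defective measure $\Ncal_n(\,\cdot\,;T<+\infty)$ and absorb the normalization through $\log p\leq 0$; both yield the same bound.
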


\begin{proof}
	Let $\alpha_n = \lfloor \alpha c_n\rfloor$, and consider first the case where $X_n$ has no infinite excursion, i.e., $\Ncal_n(T = +\infty) = 0$. Let $R_{\alpha,n} < +\infty$ be the right endpoint of the $\alpha_n$th excursion of $X_n$: then by monotonicity of $v_m$ in $m$, we obtain
	\[ \P \left( v_m(\overline \Phi_\varepsilon(X_n)) \geq \eta \right) \leq \P\left( R_{\alpha,n} \leq m \right) + \P \left( v_{R_{\alpha,n}} (\overline \Phi_\varepsilon(X_n)) \geq \eta \right). \]
	
	Let $N_{\alpha,n,\varepsilon}$ be the number of $\varepsilon$-big excursions $e$ of $X_n$ among the first $\alpha_n$:
	\[ \P \left( v_{R_{\alpha,n}} (\overline \Phi_\varepsilon(X_n)) \ge \eta \right) = 1-\E \left[ \left\{ \Ncal_n \left( v_\infty < \eta \, | \, \varphi \leq \varepsilon  \right) \right\}^{N_{\alpha,n,\varepsilon}} \right] \leq \alpha c_n \Ncal_n \left( v_\infty \geq \eta \, | \, \varphi \leq \varepsilon \right) \]
	using to derive the second inequality the inequalities $1 - x^n \leq n (1-x)$ and $N_{\alpha,n,\varepsilon} \leq \alpha c_n$. By definition, $R_{\alpha,n}$ is equal to
	\[ R_{\alpha,n} = \sum_{i=1}^{\alpha_n} (E_{n,i} + T_{n,i}) \]
	where all the random variables appearing in the right hand side are independent, $E_{n,i}$ is exponentially distributed with parameter $b_n$ and $T_{n,i}$ is equal in distribution to $T$ under $\Ncal_n$. Thus
	\[ \P\left( R_{\alpha,n} \leq m \right) \leq e^{\lambda m} \E \left( e^{-\lambda R_{\alpha,n}} \right) = e^{\lambda m} \left\{ \E \left( e^{-\lambda E_n} \right) \right\}^{\alpha_n} \left\{ \Ncal_n \left( e^{-\lambda T} \right) \right\}^{\alpha_n} \]
	and by convexity, we obtain that
	\[ \P\left( R_{\alpha,n} \leq m \right) \leq \exp \left( \lambda m - \alpha_n \E \left( 1 - e^{-\lambda E_n} \right) - \alpha_n \Ncal_n \left( 1 - e^{-\lambda T} \right) \right). \]
	
	This proves the result when $\Ncal_n(T = +\infty) = 0$. Assuming $\Ncal_n(T = +\infty) > 0$, we can write isolating the infinite excursion (which has law $\Ncal_n(\, \cdot \, | \, T = +\infty)$)
	\[ \P \left( v_m(\overline \Phi_\varepsilon(X_n)) \geq \eta \right) \leq \P \left( v_m(\overline \Phi_\varepsilon(X_n')) \geq \eta \right) + \Ncal_n \left( v_m \geq \eta, \varphi \leq \varepsilon \, | \, T = +\infty \right) \]
	where $X'_n(t) = X_n(t)$ if $T(e^{st}(X_n,t)) < +\infty$ and $X'_n(t) = a$ if $T(e^{st}(X_n,t)) = +\infty$ (recall that $e^{st}(f,t) \in \Ecal$ is the excursion of $f$ straddling $t$). But we have
	\[ \P\left(v_m(\overline \Phi_\varepsilon(X_n')) \geq \eta\right) \leq \P\left(v_m(\overline \Phi_\varepsilon(X_n'')) \geq \eta\right) \]
	with $X''_n$ a regenerative process with holding times with parameter $b_n$ and normalized excursion measure $\Ncal_n(\, \cdot \, | \, T < +\infty)$, to which we can apply the previous results.
\end{proof}

A corollary to the previous observations is the following statement, which illustrates a possible way to combine the previous results. See also Theorems \ref{thm:main-2} and \ref{thm:main-3} for other extensions in the case when $V=\R^d$.

\begin{theorem} \label{thm:ex}
	Assume that the zero set of $X$ has zero Lebesgue measure, that $\Ncal(\varphi = +\infty) = 0$, that~\eqref{h:varphi} and~\eqref{h:T+varphi} hold and that $c_n / b_n \to 0$. Then $g_\varepsilon(X_n) \Rightarrow g_\varepsilon(X)$, $T(e_\varepsilon(X_n)) \Rightarrow T(e_\varepsilon(X))$ and $\varphi(e_\varepsilon(X_n)) \Rightarrow \varphi(e_\varepsilon(X))$ for every $\varepsilon \in \set$.
	
	Assume in addition that $\Ncal_n(T = +\infty) = 0$ for every $n \geq 1$, that the joint convergence $(e_\varepsilon, \varphi\circ e_\varepsilon)(X_n)\Rightarrow (e_\varepsilon, \varphi\circ e_\varepsilon)(X)$ holds for every $\varepsilon \in \set$ and that
	\begin{equation} \label{eq:ass-tightness}
		\lim_{\varepsilon \to 0} \limsup_{n \to +\infty} \left[ c_n \Ncal_n \left( v_\infty \geq \eta, \varphi \leq \varepsilon \right) \right] = 0
	\end{equation}
	for every $\eta > 0$. Then $X_n \Rightarrow X$.
\end{theorem}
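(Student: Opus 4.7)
The plan is to prove the two conclusions in succession. For the first one, I would derive the missing assumption~\eqref{h:T} from~\eqref{h:varphi},~\eqref{h:T+varphi} and $\Ncal(\varphi=+\infty)=0$, and then invoke Lemmas~\ref{lemma:H} and~\ref{lem:conv g}. For the second, I would assemble the hypotheses of Theorem~\ref{thm:main}: joint convergence via Lemma~\ref{lemma:joint-conv} and tightness via Lemmas~\ref{lemma:tightness-weak} and~\ref{lemma:overline-phi}.

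The crux of the first stage is the derivation of~\eqref{h:T}. For fixed $\lambda>0$ and $\varepsilon,M \in \set$ with $\varepsilon<M$, I would split
\[
c_n \Ncal_n(1-e^{-\lambda T}) = c_n \Ncal_n(1-e^{-\lambda T};\varphi\le\varepsilon) + c_n \Ncal_n(1-e^{-\lambda T};\varepsilon<\varphi\le M) + c_n \Ncal_n(1-e^{-\lambda T};\varphi>M).
\]
The first piece converges to $\Ncal(1-e^{-\lambda T};\varphi\le\varepsilon)$ by~\eqref{h:T+varphi}; the middle piece, written as a difference of two~\eqref{h:T+varphi} applications at thresholds $\varepsilon$ and $M$, converges to $\Ncal(1-e^{-\lambda T};\varepsilon<\varphi\le M)$; and the last piece is sandwiched between $0$ and $c_n \Ncal_n(\varphi>M)\to \Ncal(\varphi>M)$ by~\eqref{h:varphi}. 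Letting $n\to+\infty$ then $M\to+\infty$ along $\set$, and using $\Ncal(\varphi=+\infty)=0$ to send $\Ncal(\varphi>M)\to 0$ and, by monotone convergence, $\Ncal(1-e^{-\lambda T};\varphi\le M)\to\Ncal(1-e^{-\lambda T})$, gives~\eqref{h:T}. Lemma~\ref{lemma:H} then yields the convergences of $T\circ e_\varepsilon$ and $\varphi\circ e_\varepsilon$. Finally, the hypothesis that the zero set of $X$ has zero Lebesgue measure forces the drift $d$ of the inverse local time subordinator to vanish, so $c_n/b_n\to 0 = d$ meets the hypothesis of Lemma~\ref{lem:conv g}, providing $g_\varepsilon(X_n)\Rightarrow g_\varepsilon(X)$.

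For the second stage, the individual convergences just obtained combine with the assumed $(e_\varepsilon,\varphi\circ e_\varepsilon)(X_n)\Rightarrow (e_\varepsilon,\varphi\circ e_\varepsilon)(X)$ to give~\eqref{eq:cond} via Lemma~\ref{lemma:joint-conv}. The marginal tightness and non-degeneracy hypotheses of Lemma~\ref{lemma:tightness-weak} are automatic from the first stage, since $g_\varepsilon(X)$ and $T(e_\varepsilon(X))$ are almost surely positive, so it remains to kill the right-hand side of its oscillation bound. With $\Ncal_n(T=+\infty)=0$, Lemma~\ref{lemma:overline-phi} simplifies to
\[
\P \bigl(v_m(\overline\Phi_\varepsilon(X_n))\ge\eta\bigr) \le \alpha c_n \Ncal_n(v_\infty\ge\eta \mid \varphi\le\varepsilon) + e^{\lambda m} \exp\!\left(-\frac{\lfloor\alpha c_n\rfloor\lambda}{\lambda+b_n} - \lfloor\alpha c_n\rfloor \Ncal_n(1-e^{-\lambda T})\right)
\]
for any $\alpha,\lambda>0$. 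I would send $n\to+\infty$ using $\Ncal_n(\varphi\le\varepsilon)\to 1$ (from~\eqref{h:varphi}), the just-established~\eqref{h:T} and $c_n/b_n\to 0$; then $\varepsilon\to 0$ to eliminate the first summand via~\eqref{eq:ass-tightness}; and finally $\alpha\to+\infty$ to drive the second summand to~$0$ (using $\Ncal(1-e^{-\lambda T})>0$, which holds because $\Ncal$ has infinite mass). This yields the oscillation half of tightness, after which Theorem~\ref{thm:main} concludes.

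The main obstacle is the derivation of~\eqref{h:T}: once that is in place, the rest is a matter of orchestrating previously established lemmas in the right order. A secondary point to flag is that in a general Polish space $V$, Lemma~\ref{lemma:tightness-weak} only yields the oscillation half of tightness; pointwise tightness of $X_n(t)$ at continuity points of $X$ must be obtained separately, for instance from the weak convergence $\Phi_\varepsilon(X_n)\Rightarrow\Phi_\varepsilon(X)$ (implicit in the proof of Lemma~\ref{lemma:tightness-weak}) combined with the vanishing of $v_m(\overline\Phi_\varepsilon(X_n))$ just established.
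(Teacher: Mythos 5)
Your proposal is correct and follows essentially the same route as the paper. A few comparative notes.

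For the derivation of~\eqref{h:T}, the paper uses a two-sided sandwich
\[
c_n \Ncal_n\bigl(1-e^{-\lambda T};\varphi\le\varepsilon\bigr) \leq c_n\Ncal_n\bigl(1-e^{-\lambda T}\bigr) \leq c_n\Ncal_n\bigl(1-e^{-\lambda T};\varphi\le\varepsilon\bigr) + c_n \Ncal_n\bigl(\varphi>\varepsilon\bigr)
\]
and lets $n\to+\infty$ then $\varepsilon\to+\infty$ along $\set$; your three-piece split with an intermediate threshold $M$ is equivalent but slightly more baroque. Everything else in the first stage (use of Lemmas~\ref{lemma:H} and~\ref{lem:conv g}, the observation that zero Lebesgue measure of the zero set forces drift $d=0$, hence $c_n/b_n\to 0=d$) matches the paper. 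For the second stage, you spell out more explicitly than the paper how the $\alpha\to+\infty$ limit kills the exponential term of~\eqref{eq:bound}, using $\Ncal(1-e^{-\lambda T})>0$; the paper compresses this into ``one readily sees.'' Your check that $\P(g_\varepsilon(X)=0)=\P(T(e_\varepsilon(X))=0)=0$, needed for the non-degeneracy hypotheses of Lemma~\ref{lemma:tightness-weak}, is correct.

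Your final flag about pointwise/compact-containment tightness is a legitimate and worthwhile observation: as stated, Lemma~\ref{lemma:tightness-weak} yields full tightness only in the special case $V=\R^d$, $\varphi=\norm{\cdot}_\infty^d$, and the paper's proof of Theorem~\ref{thm:ex}, by only verifying the oscillation condition, implicitly passes over the compact containment part for general $V$ and $\varphi$. The remedy you propose (combine $\Phi_\varepsilon(X_n)\Rightarrow\Phi_\varepsilon(X)$ with the vanishing of $v_m(\overline\Phi_\varepsilon(X_n))$) is the natural one, but be aware it delivers only that $X_n(t)$ lies, with high probability, in the union of a compact set and a small closed ball around $a$; this gives a genuine compact set only when small closed balls around $a$ are compact, i.e., when $V$ is locally compact near $a$. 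For a truly general Polish $V$ the statement would need an additional compact-containment hypothesis. This is a real subtlety that the paper does not discuss and that you were right to surface.
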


\begin{proof}
	First we prove that~\eqref{h:varphi} and~\eqref{h:T+varphi} together with the assumption $\Ncal(\varphi = +\infty) = 0$ imply~\eqref{h:T}. Let $\lambda > 0$ and $\varepsilon > 0$: using $0 \leq 1 - e^{-\lambda T} \leq 1$ one gets
	\[ c_n \Ncal_n\left(1-e^{- \lambda T} ; \varphi \leq \varepsilon \right) \leq c_n\Ncal_n\left(1-e^{- \lambda T}\right) \leq c_n\Ncal_n\left(1-e^{- \lambda T} ; \varphi \leq \varepsilon \right) + c_n \Ncal_n\left(\varphi > \varepsilon \right). \]
	
	Letting first $n \to +\infty$ and then $\varepsilon \to +\infty$ (while in $C$) then implies~\eqref{h:T}. Thus~\eqref{h:varphi},~\eqref{h:T+varphi} and~\eqref{h:T} all hold and the first part of the theorem is just a combination of Lemmas~\ref{lemma:H} and~\ref{lem:conv g}. Assuming in addition that $(e_\varepsilon, \varphi\circ e_\varepsilon)(X_n)\Rightarrow (e_\varepsilon, \varphi\circ e_\varepsilon)(X)$  for every $\varepsilon \in \set$ gives~\eqref{eq:cond} by Lemma~\ref{lemma:joint-conv}. Hence in order to apply Theorem~\ref{thm:main} we only need to prove tightness of $X_n$. Combining~\eqref{eq:bound-tightness-1},~\eqref{eq:bound} and the fact that $c_n / b_n \to 0$ one readily sees that one is left with showing that
	\[ \lim_{\varepsilon \to 0} \limsup_{n \to +\infty} c_n \Ncal_n \left( v_\infty \geq \eta \, | \, \varphi \leq \varepsilon \right) = 0. \]
	
	Since $\Ncal_n \left( v_\infty \geq \eta \, | \, \varphi \leq \varepsilon \right) = \Ncal_n \left( v_\infty \geq \eta, \varphi \leq \varepsilon \right) \big / \Ncal_n \left( \varphi \leq \varepsilon \right)$ and $\Ncal_n(\varphi \leq \varepsilon) \to 1$ as a consequence of~\eqref{h:varphi}, the result follows from the assumption~\eqref{eq:ass-tightness}.
\end{proof}

Let us make a final comment related to the convergence of excursion measures, following the remark made after Theorem~\ref{thm:main-2}. Lemma~\ref{lemma:tightness-weak} shows that controlling excursions measured according to $v_\infty$ makes it possible to automatically control oscillations, and even implies tightness when $V = \R^d$. It is therefore natural to ask whether such a control can be obtained from the control of excursions measured according to $\varphi$. More generally, if $e_\varepsilon^\phi(f)$ denotes the first excursion $e$ of $f$ that satisfies $\phi(e) > \varepsilon$, it is natural to ask under which conditions a control on $e_\varepsilon^{\varphi_1}$ gives a control on $e_\varepsilon^{\varphi_2}$, given two different maps $\varphi_1, \varphi_2: \Ecal \to [0,\infty]$.

\begin{lemma} \label{lemma:varphi'}
	For $i = 1,2$, let $\varphi_i: \Ecal \to [0,\infty]$ be a measurable map such that $\varphi_i(e) = 0$ if and only if $e = \zero$. Let $\set_i = \{ \varepsilon > 0: \Ncal(\varphi_i = \varepsilon) = 0 \}$ and assume that $\Ncal(\varphi_i > \varepsilon)$ is finite for every $\varepsilon > 0$.
	
	If for every $\varepsilon_1 \in \set_1$ and $\varepsilon_2 \in \set_2$, $c_n \Ncal_n(\varphi_1 > \varepsilon_1) \to \Ncal(\varphi_1 > \varepsilon_1)$, $c_n \Ncal_n(\varphi_2 > \varepsilon_2) \to \Ncal(\varphi_2 > \varepsilon_2)$ and $(e_{\varepsilon_1}^{\varphi_1}, \varphi_2 \circ e_{\varepsilon_1}^{\varphi_1})(X_n) \Rightarrow (e_{\varepsilon_1}^{\varphi_1}, \varphi_2 \circ e_{\varepsilon_1}^{\varphi_1})(X)$, then $e_{\varepsilon_2}^{\varphi_2}(X_n) \Rightarrow e_{\varepsilon_2}^{\varphi_2}(X)$ for every $\varepsilon_2 \in \set_2$.
\end{lemma}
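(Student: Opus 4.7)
Fix $\varepsilon_2 \in \set_2$. Because $\Ncal(\varphi_1 > \varepsilon)$ is finite for every $\varepsilon > 0$, the set $\set_1^c$ is countable and hence $0 \in \overline{\set_1}$, so we may choose $\varepsilon_1$ arbitrarily small in $\set_1$. The plan is to use, as a proxy for $e_{\varepsilon_2}^{\varphi_2}$, the random variable $\widetilde e_{\varepsilon_1}(Y)$ defined as the first $\varphi_1$-big excursion $e$ of $Y$ satisfying $\varphi_2(e) > \varepsilon_2$ (with the usual convention $\zero$ if none exists), for $Y \in \{X_n, X\}$. I then treat two separate convergences: $\widetilde e_{\varepsilon_1}(X_n) \Rightarrow \widetilde e_{\varepsilon_1}(X)$ as $n \to \infty$ for each fixed $\varepsilon_1 \in \set_1$, and $\P(\widetilde e_{\varepsilon_1}(Y) \neq e_{\varepsilon_2}^{\varphi_2}(Y))$ becomes small as $\varepsilon_1 \to 0$, uniformly in $n$ large enough.

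The first convergence is a direct analogue of Lemma~\ref{lemma:thinning}. Regeneration promotes the hypothesis on $(e_{\varepsilon_1}^{\varphi_1}, \varphi_2 \circ e_{\varepsilon_1}^{\varphi_1})$ into joint weak convergence of the whole i.i.d.\ sequence of $\varphi_1$-big excursions paired with their $\varphi_2$-sizes, namely $((E^n_k, \varphi_2(E^n_k)), k \ge 1) \Rightarrow ((E_k, \varphi_2(E_k)), k \ge 1)$. Writing $\widetilde e_{\varepsilon_1} = E_K$ with $K = \inf\{k \ge 1 : \varphi_2(E_k) > \varepsilon_2\}$ and using that $\varepsilon_2 \in \set_2$ forces $\P(\varphi_2(E_k) = \varepsilon_2) = 0$ for every $k$, the Skorohod coupling argument together with the integer-valued nature of $K_n$ (exactly as in the second half of the proof of Lemma~\ref{lemma:thinning}) identifies the joint accumulation point and yields $\widetilde e_{\varepsilon_1}(X_n) = E^n_{K_n} \Rightarrow E_K = \widetilde e_{\varepsilon_1}(X)$. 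Here one uses $K < \infty$ almost surely, which holds as soon as $\Ncal(\varphi_1 > \varepsilon_1, \varphi_2 > \varepsilon_2) > 0$; this is true for $\varepsilon_1$ small enough by monotone convergence, since $\Ncal(\varphi_2 > \varepsilon_2) > 0$ (the case $\Ncal(\varphi_2 > \varepsilon_2) = 0$ being trivial, both sides being equal to $\zero$ almost surely).

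For the second statement, $\widetilde e_{\varepsilon_1}(Y)$ and $e_{\varepsilon_2}^{\varphi_2}(Y)$ differ only when the first $\varphi_2$-big excursion of $Y$ is itself $\varphi_1$-small, an event of probability $\Ncal_Y(\varphi_1 \le \varepsilon_1 \mid \varphi_2 > \varepsilon_2)$ (writing $\Ncal_X := \Ncal$). The hypothesis $\varphi_2 \circ e_{\varepsilon_1}^{\varphi_1}(X_n) \Rightarrow \varphi_2 \circ e_{\varepsilon_1}^{\varphi_1}(X)$, combined with $\varepsilon_2$ being a continuity point of the limit and with $c_n \Ncal_n(\varphi_1 > \varepsilon_1) \to \Ncal(\varphi_1 > \varepsilon_1)$, gives $c_n \Ncal_n(\varphi_1 > \varepsilon_1, \varphi_2 > \varepsilon_2) \to \Ncal(\varphi_1 > \varepsilon_1, \varphi_2 > \varepsilon_2)$. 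Subtracting from $c_n \Ncal_n(\varphi_2 > \varepsilon_2) \to \Ncal(\varphi_2 > \varepsilon_2)$ produces $c_n \Ncal_n(\varphi_1 \le \varepsilon_1, \varphi_2 > \varepsilon_2) \to \Ncal(\varphi_1 \le \varepsilon_1, \varphi_2 > \varepsilon_2)$, whose limit vanishes as $\varepsilon_1 \to 0$ along $\set_1$ because $\varphi_2(e) > 0$ forces $\varphi_1(e) > 0$. Dividing by the positive limit $\Ncal(\varphi_2 > \varepsilon_2)$ delivers the required uniform-in-$n$ bound.

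The two statements combine, via the triangle inequality applied to $\E F(\widetilde e_{\varepsilon_1}(Y)) - \E F(e_{\varepsilon_2}^{\varphi_2}(Y))$ for arbitrary bounded continuous $F$ on $\Ecal$, to yield $e_{\varepsilon_2}^{\varphi_2}(X_n) \Rightarrow e_{\varepsilon_2}^{\varphi_2}(X)$: first fix $\varepsilon_1 \in \set_1$ small enough that both discrepancy probabilities are bounded by $\delta/(4\norm{F}_\infty)$ for $n$ large, then send $n \to \infty$ using the first convergence. The main obstacle is precisely the uniform-in-$n$ control at the approximation step, which relies essentially on the quantitative pair of hypotheses $c_n \Ncal_n(\varphi_i > \varepsilon_i) \to \Ncal(\varphi_i > \varepsilon_i)$ for $i = 1, 2$; without them the discrepancy probability for $X_n$ could stay bounded away from $0$ in the limit.
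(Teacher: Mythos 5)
Your proof is correct and follows essentially the same route as the paper's: both hinge on splitting according to whether the first $\varphi_2$-big excursion is also $\varphi_1$-big, on converting conditioning on $\{\varphi_2>\varepsilon_2\}$ into conditioning on $\{\varphi_1>\varepsilon_1\}$ via the ratio of the two $c_n\Ncal_n$ limits, and on sending $n\to\infty$ before $\varepsilon_1\to 0$ using that $\{\varphi_2>\varepsilon_2\}\subset\{\varphi_1>0\}$ has finite $\Ncal$-mass. The only real difference is cosmetic: your thinning/Skorohod argument for $\widetilde e_{\varepsilon_1}(X_n)\Rightarrow\widetilde e_{\varepsilon_1}(X)$ is heavier than necessary, since by regeneration $\widetilde e_{\varepsilon_1}(X_n)$ has law $\Ncal_n(\,\cdot\,|\,\varphi_1>\varepsilon_1,\varphi_2>\varepsilon_2)$ and its convergence follows directly from the hypothesis on $(e_{\varepsilon_1}^{\varphi_1},\varphi_2\circ e_{\varepsilon_1}^{\varphi_1})(X_n)$ together with $\Ncal(\varphi_2=\varepsilon_2)=0$ --- which is exactly the test-function computation the paper carries out.
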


\begin{proof}
	Let $\varepsilon_1 \in \set_1$, $\varepsilon_2 \in \set_2$ and $f: \Ecal \to [0,\infty)$ be a bounded and continuous function: we must prove that $\Ncal_n( f \, | \, \varphi_2 > \varepsilon_2) \to \Ncal( f \, | \, \varphi_2 > \varepsilon_2)$. Let $M = \sup f$: then we have
	\begin{multline*}
		\Ncal_n \left( f \indicator{\varphi_1 > \varepsilon_1} \, | \, \varphi_2 > \varepsilon_2 \right) \leq \Ncal_n \left( f \, | \, \varphi_2 > \varepsilon_2 \right) \leq \Ncal_n \left( f \indicator{\varphi_1 > \varepsilon_1} \, | \, \varphi_2 > \varepsilon_2 \right)\\
		+ M \Ncal_n \left( \varphi_1 \leq \varepsilon_1 \, | \, \varphi_2 > \varepsilon_2 \right).
	\end{multline*}
	
	By definition,
	\[
		\Ncal_n \left( f \indicator{\varphi_1 > \varepsilon_1} \, | \, \varphi_2 > \varepsilon_2 \right) = \frac{\Ncal_n \left( \varphi_1 > \varepsilon_1 \right)}{\Ncal_n \left( \varphi_2 > \varepsilon_2 \right)} \Ncal_n \left( f \indicator{\varphi_2 > \varepsilon_2} \, | \, \varphi_1 > \varepsilon_1 \right)
	\]
	so that by assumption,
	\begin{align*}
		\lim_{n \to +\infty} \Ncal_n \left( f \indicator{\varphi_1 > \varepsilon_1} \, | \, \varphi_2 > \varepsilon_2 \right) & = \frac{\Ncal \left( \varphi_1 > \varepsilon_1 \right)}{\Ncal \left( \varphi_2 > \varepsilon_2 \right)} \Ncal \left( f \indicator{\varphi_2 > \varepsilon_2} \, | \, \varphi_1 > \varepsilon_1 \right)\\
		& = \Ncal \left( f \indicator{\varphi_1 > \varepsilon_1} \, | \, \varphi_2 > \varepsilon_2 \right).
	\end{align*}
	
	Since $\Ncal_n \left( \varphi_1 \leq \varepsilon_1 \, | \, \varphi_2 > \varepsilon_2 \right) = 1-\Ncal_n \left( \varphi_1 > \varepsilon_1 \, | \, \varphi_2 > \varepsilon_2 \right)$, the previous formula for $f = 1$ (the function with constant value $1$) gives $\Ncal_n \left( \varphi_1 \leq \varepsilon_1 \, | \, \varphi_2 > \varepsilon_2 \right) \to \Ncal \left( \varphi_1 \leq \varepsilon_1 \, | \, \varphi_2 > \varepsilon_2 \right)$. In particular, we get the following bounds:
	\begin{multline*}
		\Ncal \left( f \indicator{\varphi_1 > \varepsilon_1} \, | \, \varphi_2 > \varepsilon_2 \right) \leq \liminf_{n \to +\infty} \Ncal_n \left( f \, | \, \varphi_2 > \varepsilon_2 \right) \leq \limsup_{n \to +\infty} \Ncal_n \left( f \, | \, \varphi_2 > \varepsilon_2 \right)\\
		\leq \Ncal \left( f \indicator{\varphi_1 > \varepsilon_1} \, | \, \varphi_2 > \varepsilon_2 \right) + M \Ncal \left( \varphi_1 \leq \varepsilon_1 \, | \, \varphi_2 > \varepsilon_2 \right).
	\end{multline*}
	Since $\varepsilon_1 \in \set_1$ is arbitrary, letting $\varepsilon_1 \to 0$ gives the result by monotone convergence.
\end{proof}

\subsection{Shifting the first excursion to reach level $\varepsilon$} \label{sub:another-shift}

Consider for a moment the case $V = \R$ and $\varphi = \norm{\cdot}_\infty$. It is natural, at least in the context of Markov processes, to follow an excursion $e$ conditioned on entering $(\varepsilon, \infty)$ only after the time $T^\uparrow_\varepsilon$ when it enters $(\varepsilon, \infty)$. Indeed, in the case of strong Markov processes the conditioning only affects the shifted process $\theta_{T^\uparrow_\varepsilon} (e)$ through the value $e(T^\uparrow_\varepsilon)$. Hence following the excursion after time $T^\uparrow_\varepsilon$ makes it possible to get rid of the conditioning and should therefore be an easier task than studying the whole conditioned excursion. This approach turned out to be useful in Borst and Simatos~\cite{Borst:0}, see also the closely related remark by Aldous~\cite[Section 6.3]{Aldous97:0} in the context of random graphs.

In the rest of this subsection, because of tightness issues we restrict ourselves to the case $V = \R^d$, $a = 0$ and $\varphi = \norm{\cdot}_\infty^d$. With an abuse in notation define $|x| = \max_{1 \leq k \leq d} |x_i|$ for $x = (x_i, 1 \leq i \leq d) \in \R^d$. For $\varepsilon > 0$ and $f \in D$ define the times $\widetilde T^\uparrow(f, \varepsilon) \leq T^\uparrow(f, \varepsilon)$ as follows:
\[ T^\uparrow(f, \varepsilon) = \inf\left\{ t \geq 0: |f(t)| > \varepsilon \right\} \ \text{ and } \ \widetilde T^\uparrow(f, \varepsilon) = \inf\left\{ t \geq 0: |f(t)| \geq \varepsilon \text{ or } |f(t-)| \geq \varepsilon\right\}. \]

Recall that $\theta_t(f)= f(t + \, \cdot \,)$ and let $e_\varepsilon^\uparrow: D \to \Ecal$ be given by
\[ e^\uparrow_\varepsilon(f) = \theta_{T^\uparrow(e_\varepsilon(f), \varepsilon)}\left(e_\varepsilon(f)\right), \ f \in D. \]

We need preliminary results on continuity properties of the shift operator and also on the two times $T^\uparrow(f, \varepsilon)$ and $\widetilde T^\uparrow(f, \varepsilon)$.

\begin{lemma}\label{lemma:continuity-shift}
	If $f_n, f \in D$ and $t_n, t > 0$ are such that $f_n \to f$, $t_n \to t$ and $\Delta f_n(t_n) \to \Delta f(t)$, then $\theta_{t_n} (f_n) \to \theta_t(f)$.
\end{lemma}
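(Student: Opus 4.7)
The plan is to lift the $J_1$ convergence $f_n \to f$ to a sequence of time changes and then build from it a suitable time change that proves the convergence of the shifted paths. Specifically, pick $(\lambda_n) \in \Lambda_\infty$ such that $v_m(f_n \circ \lambda_n, f) \to 0$ for every $m$ in some discrete, unbounded set $M \subset [0,\infty)$. The first step is to arrange that $\lambda_n(t) = t_n$ for all $n$ large enough; this is where the assumption $\Delta f_n(t_n) \to \Delta f(t)$ enters. If $f$ is discontinuous at $t$, this already forces $\lambda_n(t) = t_n$ eventually by Proposition~VI.$2.1$ in Jacod and Shiryaev~\cite{Jacod03:0} (exactly as invoked in the proof of Lemma~\ref{lemma:continuity-concatenation}). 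If $f$ is continuous at $t$, then $f(\lambda_n(t))$ and $f(t)$ both tend to $f(t)$ and we can locally perturb $\lambda_n$ on a small neighborhood of $t$ (shrinking with $n$) so that the new time change still lies in $\Lambda_\infty$, still satisfies $v_m(f_n \circ \lambda_n, f) \to 0$ on $M$, and has $\lambda_n(t) = t_n$.

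Once this normalization is achieved, define
\[ \mu_n(s) = \lambda_n(s + t) - t_n, \ s \geq 0. \]
Then $\mu_n$ is a continuous, strictly increasing bijection of $[0,\infty)$, and the triangular inequality yields
\[ \norm{\mu_n - \Id}_\infty \leq \norm{\lambda_n - \Id}_\infty + |t - t_n|, \]
so $(\mu_n) \in \Lambda_\infty$. A direct computation gives, for $0 \leq s \leq m$,
\[ v\bigl( \theta_{t_n}(f_n)(\mu_n(s)), \theta_t(f)(s) \bigr) = v\bigl( f_n(\lambda_n(s+t)), f(s+t) \bigr), \]
so that
\[ v_m\bigl(\theta_{t_n}(f_n) \circ \mu_n, \theta_t(f)\bigr) \leq v_{m+t}(f_n \circ \lambda_n, f). \]

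Choosing $m$ so that $m + t \in M$ (which can still be done along a discrete unbounded set of values of $m$, since $M$ is unbounded) yields $v_m(\theta_{t_n}(f_n) \circ \mu_n, \theta_t(f)) \to 0$ on that set, which by the characterization of $J_1$ convergence proves $\theta_{t_n}(f_n) \to \theta_t(f)$.

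The main obstacle I anticipate is the local modification of $\lambda_n$ at $t$ in the continuous case: one must verify that the perturbed time change remains in $\Lambda_\infty$ and still realizes the convergence after shifting. This is handled by the fact that since $f$ is continuous at $t$ and $(\lambda_n)$ satisfies $\lambda_n(t) \to t$ and $t_n \to t$, the perturbation can be confined to an interval whose length vanishes, and continuity of $f$ at $t$ absorbs the resulting error uniformly on $[0,m]$. Everything else is a bookkeeping argument on the uniform norms of the two time changes.
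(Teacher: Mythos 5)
Your proposal is correct and follows essentially the same route as the paper's proof: normalize $\lambda_n$ so that $\lambda_n(t)=t_n$ (via Proposition~VI.$2.1$ of Jacod--Shiryaev in the discontinuous case, a local perturbation otherwise, exactly as in Lemma~\ref{lemma:continuity-concatenation}), then use the shifted time change $\mu_n(s)=\lambda_n(s+t)-t_n$, which is precisely the $\nu_n$ of the paper. The only cosmetic difference is that you track the index shift by bounding $v_m$ by $v_{m+t}$ along $M-t$, whereas the paper simply asserts the convergence for $m\in M$; both are valid.
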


\begin{proof}
	The proof is very similar to that of Lemma~\ref{lemma:continuity-concatenation}, so we only sketch it. Let $(\lambda_n) \in \Lambda_\infty$ such that $v_m(f_n \circ \lambda_n, f) \to 0$ for every $m \in M$, and assume without loss of generality that $\lambda_n(t) = t_n$. Considering $\nu_n(s) = \lambda_n(t+s)-t_n$, one can check that $(\nu_n) \in \Lambda_\infty$ and that $v_m(\theta_{t_n}(f_n) \circ \nu_n, \theta_t(f)) \to 0$ for every $m \in M$, which gives the result.
\end{proof}

For the following lemma remember that for $f = (f_1, \ldots, f_d): [0,\infty) \to \R^d$ we have $\norm{f}_t^d = \max_{1 \leq k \leq d} \norm{f_k}_t$.

\begin{lemma} \label{lemma:limsup}
	Let $f_n, f \in D$ and $\varepsilon > 0$. If $f_n \to f$, then $\limsup_n T^\uparrow(f_n, \varepsilon) \leq T^\uparrow(f, \varepsilon)$. If in addition $\widetilde T^\uparrow(f, \varepsilon) = T^\uparrow(f, \varepsilon)$, then $T^\uparrow(f_n, \varepsilon) \to T^\uparrow(f, \varepsilon)$.
\end{lemma}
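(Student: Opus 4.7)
The plan is to handle the two assertions separately. For the upper bound $\limsup_n T^\uparrow(f_n,\varepsilon) \leq T^\uparrow(f,\varepsilon)$, the argument is a standard ``approximate by a continuity point from the right''. For the equality part, the missing ingredient is the matching lower bound $\liminf_n T^\uparrow(f_n,\varepsilon) \geq T^\uparrow(f,\varepsilon)$; I will prove this by contradiction using a time-change $\lambda_n \in \Lambda_\infty$ with $f_n\circ\lambda_n \to f$ uniformly on compacts, together with the observation that the hypothesis $\widetilde T^\uparrow(f,\varepsilon) = T^\uparrow(f,\varepsilon)$ rules out an approach via a left-limit hit.

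For the first part, write $T := T^\uparrow(f,\varepsilon)$ and assume $T < +\infty$ (otherwise there is nothing to prove). Given $\delta>0$, by the definition of the infimum there exists $s \in [T, T+\delta/2]$ with $|f(s)| > \varepsilon$; by right-continuity of $f$ there is an interval $[s, s+\eta)$ on which $|f|$ remains $> \varepsilon$, and since $f$ has only countably many discontinuities I may pick $s^* \in [s, s+\eta)$ which is a continuity point of $f$. Since $f_n \to f$ in $J_1$, one has $f_n(s^*) \to f(s^*)$ at any continuity point $s^*$ of $f$; hence $|f_n(s^*)| > \varepsilon$ for every $n$ large enough, so $T^\uparrow(f_n,\varepsilon) \leq s^* \leq T + \delta$. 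Letting $\delta \to 0$ finishes this step.

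For the second part, argue by contradiction: suppose along some subsequence $t_n := T^\uparrow(f_n,\varepsilon) \to L$ with $L < T^\uparrow(f,\varepsilon)$. Pick $\lambda_n \in \Lambda_\infty$ such that $v_m(f_n\circ\lambda_n, f)\to 0$ for every $m$ in a discrete unbounded set, and set $s_n := \lambda_n^{-1}(t_n)$; since $\norm{\lambda_n-\Id}_\infty \to 0$ we have $s_n \to L$. First observe that $|f_n(t_n)| \geq \varepsilon$ for every $n$: if the infimum defining $t_n$ is attained this is direct, otherwise one picks $u_k \downarrow t_n$ with $|f_n(u_k)|>\varepsilon$ and right-continuity of $f_n$ forces $|f_n(t_n)| \geq \varepsilon$. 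Extracting once more so that $s_n \to L$ monotonically from one side, uniform convergence of $f_n\circ\lambda_n$ to $f$ on a compact containing $L$ gives $(f_n\circ\lambda_n)(s_n) - f(s_n) \to 0$, while the c\`adl\`ag character of $f$ gives $f(s_n) \to f(L)$ or $f(L-)$ according to the side of approach. Consequently $f_n(t_n) = (f_n\circ\lambda_n)(s_n)$ converges to $f(L)$ or $f(L-)$, and passing to the limit in $|f_n(t_n)| \geq \varepsilon$ yields $|f(L)| \geq \varepsilon$ or $|f(L-)| \geq \varepsilon$. Hence $\widetilde T^\uparrow(f,\varepsilon) \leq L < T^\uparrow(f,\varepsilon)$, contradicting the extra hypothesis.

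The main obstacle is this second step: the inequality $|f_n(t_n)| \geq \varepsilon$ must be transferred across the time change $\lambda_n$ to the limit $f$ at a point $L$ where $f$ may jump, and it is precisely the possibility of a left-limit hit at an equality level $\varepsilon$ that the assumption $\widetilde T^\uparrow = T^\uparrow$ is designed to exclude. The monotone-subsequence extraction for $s_n$ isolates whether the limit is $f(L)$ or $f(L-)$, which is exactly what makes the contradiction work.
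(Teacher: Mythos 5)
Your proof is correct. The first half (the $\limsup$ bound) is essentially the paper's own argument: pick a point just to the right of $T^\uparrow(f,\varepsilon)$ where $|f|>\varepsilon$, use right-continuity to find a continuity point of $f$ with the same property, and invoke convergence at continuity points. For the lower bound, however, you take a genuinely different route. The paper observes that the hypothesis $\widetilde T^\uparrow(f,\varepsilon)=T^\uparrow(f,\varepsilon)$ forces $\norm{f}_t^d<\varepsilon$ for every $t<T^\uparrow(f,\varepsilon)$ (the running supremum of a c\`adl\`ag path is approached through values $|f(s)|$ or $|f(s-)|$, all of which are $<\varepsilon$ before $\widetilde T^\uparrow$), and then uses the standard fact that $\norm{f_n}_t^d\to\norm{f}_t^d$ at continuity points $t$ of $f$ to conclude $T^\uparrow(f_n,\varepsilon)>t$ for large $n$. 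You instead argue by contradiction directly with the time changes: assuming $t_n=T^\uparrow(f_n,\varepsilon)\to L<T^\uparrow(f,\varepsilon)$, you show $|f_n(t_n)|\ge\varepsilon$, transport $t_n$ through $\lambda_n^{-1}$, extract a one-sided monotone subsequence to identify the limit of $f_n(t_n)$ as $f(L)$ or $f(L-)$, and deduce $\widetilde T^\uparrow(f,\varepsilon)\le L$, contradicting the hypothesis. Both arguments use the hypothesis for exactly the same reason — to exclude a hit of level $\varepsilon$ via a left limit before $T^\uparrow$ — but yours is more self-contained (it does not lean on the convergence of running suprema under $J_1$, which the paper leaves unjustified), at the cost of being somewhat longer; the paper's version is shorter and reuses a classical continuity property of $f\mapsto\norm{f}_t^d$.
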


\begin{proof}
	Note for simplicity $T^\uparrow = T^\uparrow(f, \varepsilon)$ and $T^\uparrow_n = T^\uparrow(f_n, \varepsilon)$. Consider $\eta > 0$ and $t_\eta \in (T^\uparrow, T^\uparrow + \eta)$ such that $f$ is continuous at $t_\eta$ and $|f(t_\eta)| > \varepsilon$. Such a $t_\eta$ always exists by definition of $T^\uparrow$, and because $f$ has at most countable many discontinuities. Then $|f_n(t_\eta)| \to |f(t_\eta)|$ and so $|f_n(t_\eta)| > \varepsilon$ for $n$ large enough, which entails for those $n$ that $T^\uparrow_n \leq t_\eta$. Hence $\limsup_n T^\uparrow_n \leq t_\eta \leq T^\uparrow + \eta$ and letting $\eta \to 0$ gives the result.
	
	Assume now that $\widetilde T^\uparrow(f, \varepsilon) = T^\uparrow(f, \varepsilon)$, and let $t < T^\uparrow$ be a continuity point of $f$, so that $\norm{f}_t^d < \varepsilon$. Then $\norm{f_n}_t^d \to \norm{f}_t^d$ and so $\norm{f_n}_t^d < \varepsilon$ for $n$ large enough. For those $n$ we therefore have $T^\uparrow_n > t$ and hence $\liminf_n T^\uparrow_n \geq t$. Since $t < T^\uparrow$ was arbitrary, letting $t \to T^\uparrow$ gives $T^\uparrow \leq \liminf_n T^\uparrow_n$.
\end{proof}

For $f \in D$ let $S_f \in D$ be the process recording its past supremum: $S_f(t) = \norm{f}^d_t$. Then $S_f$ is an increasing function and $T^\uparrow(f, \varepsilon) = \inf \left\{t \geq 0: S_f(t) > \varepsilon \right\} = S_f^{-1}(\varepsilon)$ with $S_f^{-1}$ the right-continuous inverse of $S_f$. If $f$ is continuous, then we also have
\[ \widetilde T^\uparrow(f, \varepsilon) = \inf \left\{ t \geq 0: |f(t)| \geq \varepsilon \right\} = \inf \left\{t \geq 0: S_f(t) \geq \varepsilon \right\} = \widetilde S_f^{-1}(\varepsilon) \]
with $\widetilde S_f^{-1}$ the left-continuous inverse of $S_f$. Since $\widetilde S_f^{-1}$ and $S_f^{-1}$ coincide exactly at continuity points of $S_f$ we get the following result.

\begin{lemma} \label{lemma:countable}
	If $f \in D$ is continuous, then
	\[ \left\{ \varepsilon > 0: T^\uparrow(f, \varepsilon) = \widetilde T^\uparrow(f, \varepsilon) \right\} = \left\{ \varepsilon > 0: \Delta S_f^{-1}(\varepsilon) = 0 \right\}. \]
\end{lemma}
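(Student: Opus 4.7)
The plan is to reduce the claim to the elementary fact that the left- and right-continuous inverses of a non-decreasing function differ precisely at the jumps of the right-continuous inverse. More specifically, using the identities displayed just before the lemma, namely $T^\uparrow(f,\varepsilon) = S_f^{-1}(\varepsilon)$ and $\widetilde T^\uparrow(f,\varepsilon) = \widetilde S_f^{-1}(\varepsilon)$ (the latter relying on the continuity of $f$, hence of $S_f$), the statement is equivalent to
\[ \widetilde S_f^{-1}(\varepsilon) = S_f^{-1}(\varepsilon) \iff \Delta S_f^{-1}(\varepsilon) = 0. \]
Since $S_f^{-1}$ is by construction the right-continuous inverse, this reduces to identifying $\widetilde S_f^{-1}(\varepsilon)$ with the left-limit $S_f^{-1}(\varepsilon-)$.

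To establish $\widetilde S_f^{-1}(\varepsilon) = S_f^{-1}(\varepsilon-)$, I would argue by double inequality. For the bound $S_f^{-1}(\varepsilon-) \leq \widetilde S_f^{-1}(\varepsilon)$: for every $\delta > 0$, $\{ s : S_f(s) \geq \varepsilon \} \subset \{ s : S_f(s) > \varepsilon - \delta \}$, whence $S_f^{-1}(\varepsilon - \delta) \leq \widetilde S_f^{-1}(\varepsilon)$, and letting $\delta \downarrow 0$ yields the inequality. For the reverse bound $\widetilde S_f^{-1}(\varepsilon) \leq S_f^{-1}(\varepsilon-)$: pick any $t > S_f^{-1}(\varepsilon-)$, so that $t > S_f^{-1}(\varepsilon - \delta)$ for some $\delta > 0$; by definition of the right-continuous inverse together with monotonicity, $S_f(t) > \varepsilon - \delta$, and since this holds for arbitrarily small $\delta > 0$, we conclude $S_f(t) \geq \varepsilon$, whence $\widetilde S_f^{-1}(\varepsilon) \leq t$; letting $t$ decrease to $S_f^{-1}(\varepsilon-)$ finishes the argument.

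Combining these two steps gives $\Delta S_f^{-1}(\varepsilon) = S_f^{-1}(\varepsilon) - S_f^{-1}(\varepsilon-) = T^\uparrow(f,\varepsilon) - \widetilde T^\uparrow(f,\varepsilon)$, and since the latter difference is always non-negative, it vanishes if and only if the two hitting times coincide, which is exactly the claimed equality of sets. No real obstacle is expected here; the only subtlety is keeping track of strict versus non-strict inequalities in the definitions of $S_f^{-1}$ and $\widetilde S_f^{-1}$ and exploiting continuity of $S_f$ at the right moment to pass from $S_f(t) > \varepsilon - \delta$ for all small $\delta$ to $S_f(t) \geq \varepsilon$.
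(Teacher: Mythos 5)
Your proof is correct and takes essentially the same route as the paper: both reduce the claim to the identities $T^\uparrow(f,\varepsilon) = S_f^{-1}(\varepsilon)$ and $\widetilde T^\uparrow(f,\varepsilon) = \widetilde S_f^{-1}(\varepsilon)$ (the latter using continuity of $f$) and then invoke the elementary fact that the left- and right-continuous inverses of $S_f$ agree exactly where $S_f^{-1}$ is continuous. The only difference is that you spell out the double-inequality verification of $\widetilde S_f^{-1}(\varepsilon) = S_f^{-1}(\varepsilon-)$ that the paper leaves implicit.
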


\begin{prop} \label{prop:up}
	Consider $\varphi = \norm{\cdot}^d_\infty$. Assume that $X$ is continuous and that $e^\uparrow_\varepsilon(X_n) \Rightarrow e^\uparrow_\varepsilon(X)$ and $T(e^\uparrow_\varepsilon(X_n)) \Rightarrow T(e^\uparrow_\varepsilon(X))$ for every $\varepsilon$ such that $\Ncal(\varphi = \varepsilon) = 0$. Then it holds that $(e_\varepsilon, T \circ e_\varepsilon)(X_n) \Rightarrow (e_\varepsilon, T \circ e_\varepsilon)(X_n)$ for every $\varepsilon$ such that $\Ncal(\varphi = \varepsilon) = 0$.
\end{prop}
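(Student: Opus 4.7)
The plan is to apply Skorohod's representation and then identify the accumulation points of $(e_\varepsilon(X_n),T\circ e_\varepsilon(X_n))$ using Lemmas~\ref{lemma:continuity-shift},~\ref{lemma:limsup} and~\ref{lemma:countable}. Fix $\varepsilon>0$ with $\Ncal(\varphi=\varepsilon)=0$ and abbreviate $e_n=e_\varepsilon(X_n)$, $e=e_\varepsilon(X)$, $e^\uparrow_n=e^\uparrow_\varepsilon(X_n)$, $e^\uparrow=e^\uparrow_\varepsilon(X)$, $T^\uparrow_n=T^\uparrow(e_n,\varepsilon)$ and $T^\uparrow=T^\uparrow(e,\varepsilon)$. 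By Skorohod representation applied to the assumption, we may assume $(e^\uparrow_n,T(e^\uparrow_n))\to(e^\uparrow,T(e^\uparrow))$ almost surely. Recall the decomposition $T(e_n)=T^\uparrow_n+T(e^\uparrow_n)$, with $e_n$ a time-shift of $e^\uparrow_n$ on $[T^\uparrow_n,\infty)$ and $|e_n|\leq\varepsilon$ on $[0,T^\uparrow_n)$.

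First I would prove tightness of the joint sequence $(e_n,T(e_n))$: oscillations of $e_n$ are controlled by $2\varepsilon$ on the pre-shift part plus the (tight, since convergent) oscillations of $e^\uparrow_n$; length-tightness reduces via the above identity to tightness of $T^\uparrow_n$, which I would derive from the regenerative law of $e_n\sim\Ncal_n(\,\cdot\mid\varphi>\varepsilon)$ together with the tightness of $e^\uparrow_n(0)=e_n(T^\uparrow_n)$ coming from the assumption. Extract an a.s.\ convergent subsequence, $(e_n,T(e_n))\to(e',\tau')$, so that $T^\uparrow_n\to\sigma':=\tau'-T(e^\uparrow)$. By Lemma~\ref{lemma:limsup}, $\sigma'\leq T^\uparrow(e',\varepsilon)$. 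Continuity of $X$ makes $e'$ continuous a.s., and Lemma~\ref{lemma:countable} then shows that $\widetilde T^\uparrow(e',\varepsilon)=T^\uparrow(e',\varepsilon)$ a.s.\ for $\varepsilon$ outside an at-most countable exceptional set, which we can exclude from $\set$ by a Fubini argument. For such $\varepsilon$ the second half of Lemma~\ref{lemma:limsup} upgrades the inequality to $\sigma'=T^\uparrow(e',\varepsilon)$, and Lemma~\ref{lemma:continuity-shift} applied to $(e_n,T^\uparrow_n)\to(e',\sigma')$ forces $\theta_{\sigma'}(e')=e^\uparrow$.

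This determines $e'$ on $[\sigma',\infty)$ via the shift relation. The pre-shift part of $e'$ on $[0,\sigma')$ is identified using the Skorohod coupling and the regenerative law of $e_n$: its joint distribution with $e^\uparrow$ must coincide with the analogous joint law of the pre-upcrossing and post-upcrossing portions of $e_\varepsilon(X)$. This yields $(e',\tau')\stackrel{d}{=}(e,T(e))$ and hence the desired joint weak convergence. The main obstacles are (i) tightness of $T^\uparrow_n$, which is not directly provided by the assumption and forces one to exploit the regenerative structure of $X_n$, and (ii) the identification of the pre-shift part of any accumulation point, since the convergence of the shifted excursion alone does not directly constrain it; the continuity of $X$, combined with the regenerative structure and the exact identification $\sigma'=T^\uparrow(e',\varepsilon)$, is what makes this identification ultimately possible.
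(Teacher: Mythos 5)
Your proof works exclusively at the fixed level $\varepsilon$ and never uses the hypothesis at levels $\delta < \varepsilon$. This is the crux of the matter: the hypothesis $e^\uparrow_\varepsilon(X_n) \Rightarrow e^\uparrow_\varepsilon(X)$ is, by design, silent about the portion of the excursion before its first $\varepsilon$-upcrossing (one can easily construct regenerative $X_n$ whose $\varepsilon$-big excursion has the right post-upcrossing law but arbitrary pre-upcrossing behaviour), so $e_\varepsilon(X_n)$ cannot be controlled from the level-$\varepsilon$ hypothesis alone; the hypothesis at smaller levels is genuinely needed, and a proof that never invokes it would prove a false statement. The paper's argument is built around this: its Step~1 deduces, from the hypothesis at $\delta$, the regeneration identity $e_\varepsilon(X_n) \stackrel{d}{=} \big(\,e_\delta(X_n) \mid \varphi(e_\delta(X_n)) > \varepsilon\,\big)$, and the algebraic identities $e_\delta^\uparrow \circ e_\delta = e_\delta^\uparrow$ and $\varphi \circ e_\delta^\uparrow = \varphi \circ e_\delta$, that $(e_\delta^\uparrow, T \circ e_\delta^\uparrow)(e_\varepsilon(X_n)) \Rightarrow (e_\delta^\uparrow, T \circ e_\delta^\uparrow)(e_\varepsilon(X))$ for every $\delta \in \set$ with $\delta < \varepsilon$, and everything downstream rests on this.

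Concretely, three of your steps break. First, your tightness bound controls the oscillation of $e_n$ by $2\varepsilon$ plus the oscillation of $e_n^\uparrow$; this does not yield $\lim_{\eta \to 0}\limsup_n \P\big(w_m(e_n, \eta) \geq \zeta\big) = 0$ for $\zeta \leq 2\varepsilon$, because the $2\varepsilon$ term does not shrink with $\eta$ (the paper instead chooses $\delta < \zeta/2$, $\delta \in \set$, $\delta < \varepsilon$, and bounds by $2\delta + w_m(e_\delta^\uparrow(e_n),\eta)$, using Step~1 to control the second term). Second, your identification of the pre-shift part of the accumulation point $e'$ on $[0, \sigma')$ via ``the Skorohod coupling and the regenerative law of $e_n$'' is not an argument — nothing in the level-$\varepsilon$ hypothesis constrains this piece; what actually identifies it is letting $\delta \to 0$ in the Step~1 convergence, since $T^\uparrow(e,\delta) \to 0$ and hence $e_\delta^\uparrow(e) \to e$ pointwise, so the convergence at scale $\delta$ recovers more and more of the excursion. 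Third, you apply Lemma~\ref{lemma:countable} at the level $\varepsilon$ itself, which requires $\varepsilon$ to lie outside the at-most-countable exceptional set $H^c$ attached to $e'$; but $\varepsilon$ was fixed in advance, $H^c$ depends on the law of $e'$, and there is no reason the fixed $\varepsilon$ avoids it, so a ``Fubini argument'' would only give the conclusion for a co-countable set of $\varepsilon$, strictly weaker than claimed. The paper sidesteps this by applying Lemma~\ref{lemma:countable} at the free levels $\delta \in H$ and sending $\delta \to 0$, never at $\varepsilon$.
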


\begin{proof}
	In the rest of the proof let $\set = \{ \varepsilon > 0: \Ncal(\varphi = \varepsilon) = 0 \}$ and define $\Jcal(f) = (f, T(f))$ for $f \in D$. Using Lemma~\ref{lemma:joint-conv} the assumptions entail $\Jcal(e_\varepsilon^\uparrow(X_n)) \Rightarrow \Jcal(e_\varepsilon^\uparrow(X))$ for every $\varepsilon \in \set$. In the rest of this proof we fix $\varepsilon \in \set$ and note $E_n = e_\varepsilon(X_n)$ and $E = e_\varepsilon(X)$. The goal is to prove that $\Jcal(E_n) \Rightarrow \Jcal(E)$, and the proof operates in four steps: first we prove that $\Jcal(e_\delta^\uparrow(E_n)) \Rightarrow \Jcal(e_\delta^\uparrow(E))$ for any $0 < \delta < \varepsilon$ with $\delta \in \set$ and then that the sequence $(E_n, n \geq 1)$ is C-tight. Then we identify accumulation points and finally conclude that $\Jcal(E_n) \Rightarrow \Jcal(E)$.
	\\
	
	\setcounter{step}{1}
	\newstep Let $0 < \delta < \varepsilon$ with $\delta \in \set$: we prove that $\Jcal(e_\delta^\uparrow(E_n)) \Rightarrow \Jcal(e_\delta^\uparrow(E))$. So let $f: \Ecal \times [0,\infty) \to [0,\infty)$ be a bounded, continuous function: we must show that
	\[ \lim_{n \to +\infty} \E \left[ f \left( \Jcal\big(e_\delta^\uparrow(E_n)\big) \right) \right] = \E \left[ f \left( \Jcal\big(e_\delta^\uparrow(E) \big) \right) \right]. \]
	
	Since $X_n$ is regenerative, $E_n = e_\varepsilon(X_n)$ is equal in distribution to $e_\delta(X_n)$ conditioned on $\{\norm{e_\delta(X_n)}_\infty^d > \varepsilon\}$ and so
	\[ \E \left[ f \left( \Jcal\big(e_\delta^\uparrow(E_n) \big) \right) \right] = \E \left[ f \left( \Jcal\big(e_\delta^\uparrow(e_\delta(X_n)) \big) \right) \, \big | \, \norm{e_\delta(X_n)}_\infty^d > \varepsilon \right]. \]
	
	By definition we have $e_\delta^\uparrow \circ e_\delta = e_\delta^\uparrow$ and $\norm{e_\delta^\uparrow(f)}_\infty^d = \norm{e_\delta(f)}_\infty^d$, hence
	\[ \E \left[ f \left( \Jcal\big(e_\delta^\uparrow(E_n) \big) \right) \right] = \frac{\E \left[ f \left( \Jcal\big(e_\delta^\uparrow(X_n) \big) \right) \, ; \, \norm{e_\delta^\uparrow(X_n)}_\infty^d > \varepsilon \right]}{\P \left( \norm{e_\delta^\uparrow(X_n)}_\infty^d > \varepsilon \right)}. \]
	
	The continuous mapping theorem together with $\Jcal(e_\delta^\uparrow(X_n)) \Rightarrow \Jcal(e_\delta^\uparrow(X))$ imply that $(\Jcal \circ e_\delta^\uparrow, \varphi \circ e_\delta^\uparrow)(X_n) \Rightarrow ( \Jcal \circ e_\delta^\uparrow, \varphi\circ e_\delta^\uparrow)(X)$. Moreover, since $\Ncal(\varphi = \varepsilon) = 0$ we obtain
	\[ \P\left(\norm{e_\delta^\uparrow(X)}_\infty^d = \varepsilon\right) = \P\left(\norm{e_\delta(X)}_\infty^d = \varepsilon\right) = \Ncal \left( \varphi = \varepsilon \, | \, \varphi > \delta \right) = 0 \]
	and so letting $n\to +\infty$ we obtain
	\[ \lim_{n \to +\infty} \E \left[ f \left( \Jcal\big(e_\delta^\uparrow(E_n) \big) \right) \right] = \frac{\E \left[ f \left( \Jcal\big(e_\delta^\uparrow(X) \big) \right) \, ; \, \norm{e_\delta^\uparrow(X)}_\infty^d > \varepsilon \right]}{\P \left( \norm{e_\delta^\uparrow(X)}_\infty^d > \varepsilon \right)} = \E \left[ f \left( \Jcal\big(e_\delta^\uparrow(E) \big) \right) \right] \]
	invoking similar arguments for $X$ as for $X_n$ to get the last equality. This achieves the first step.
	\\
	
	\newstep We now prove the C-tightness of the sequence $(E_n)$. We control the oscillations, control over the supremum is given by similar arguments. Let $m, \zeta > 0$, we must show that
	\[ \lim_{\eta \to 0} \limsup_{n \to +\infty} \P \left( w_m(E_n, \eta) \geq \zeta \right) = 0. \]
	
	For any $t \geq 0$ and $\delta > 0$, we have by definition
	\[ E_n(t) = e_\delta^\uparrow(E_n) \big(t - T^\uparrow(E_n, \delta) \big) \]
	if $t \geq T^\uparrow(E_n, \delta)$ whereas $|E_n(t)| \leq \delta$ if $t < T^\uparrow(E_n, \delta)$. Hence for any $\delta, \eta > 0$ and $0 \leq s, t \leq m$ with $|t-s| \leq \eta$, we have $\left| E_n(t) - E_n(s) \right| \leq 2 \delta + w_m \big( e_\delta^\uparrow(E_n), \eta \big)$ (the difference $E_n(t) - E_n(s)$ is to be understood componentwise). Choosing any $\delta < \zeta / 2$ gives
	\[ \P \left( w_m(E_n, \eta) \geq \zeta \right) \leq \P \left( w_m(e^\uparrow_{\delta}(E_n), \eta) \geq \zeta / 2 \right) \]
	and in particular,
	\[ \lim_{\eta \to 0} \limsup_{n \to +\infty} \P \left( w_m(E_n, \eta) \geq \zeta \right) \leq \lim_{\eta \to 0} \limsup_{n \to +\infty} \P \left( w_m(e^\uparrow_{\delta}(E_n), \eta) \geq \zeta / 2 \right). \]
	
	For $\delta < \varepsilon$ we have $e^\uparrow_\delta(E_n) = e^\uparrow_\delta \circ e_\varepsilon (X_n) = e^\uparrow_\delta(X_n)$ which is by assumption $C$-tight for $\delta \in \set$. Thus for $\delta$ small enough (i.e., $\delta < \varepsilon$ and $\delta < \zeta/2$) and in $\set$, the right-hand side of the inequality in the previous display is zero, which ends this step.
	\\
	
	\newstep We prove that $E_n \Rightarrow E$: let $E'$ be any continuous accumulation point of the C-tight sequence $(E_n)$, and let $(u_n)$ be a subsequence such that $E_{u_n} \Rightarrow E'$. We must show that $E'$ is equal in distribution to $E$. Remember that $S_{E'}$ is the process of the past supremum of $E'$ with right-continuous inverse $S_{E'}^{-1}$, and let $H$ be the following deterministic set:
	\[ H = \left\{ \delta > 0: \P \left( \Delta S_{E'}^{-1}(\delta) = 0 \right) = 1 \right\}. \]
	
	Since $S_{E'}^{-1}$ is almost surely c\`adl\`ag it is well-known, see for instance Billingsley~\cite[Section~$13$]{Billingsley99:0}, that the set $H^c = [0,\infty) \setminus H$ is countable; thus $H$ is dense. Assume by Skorohod's representation theorem that the convergence $E_{u_n} \to E'$ holds almost surely. Let $\delta \in H$, and assume without loss of generality that $\Delta S_{E'}^{-1}(\delta) = 0$. Then Lemma~\ref{lemma:countable} gives $T^\uparrow(E', \delta) = \widetilde T^\uparrow(E', \delta)$, which combined with Lemma~\ref{lemma:limsup} implies that $T^\uparrow(E_{u_n}, \delta) \to T^\uparrow(E', \delta)$. In turn, this gives together with Lemma~\ref{lemma:continuity-shift} and the definition of $e^\uparrow_\delta$ that $e^\uparrow_\delta(E_{u_n}) \to e^\uparrow_\delta(E')$, since $E'$ is continuous.
	
	On the other hand we have proved that $e_{\delta}^\uparrow(E_{u_n}) \Rightarrow e_{\delta}^\uparrow(E)$ and so $e_{\delta}^\uparrow(E')$ and $e_{\delta}^\uparrow(E)$ are equal in distribution for every $\delta \in H$. Since $H$ is dense, we have $0 \in \overline H$ and so we can let $\delta \to 0$.
	
	Observe that $T^\uparrow(e, \delta) \to 0$ as $\delta \to 0$ if $e \in \Ecal$: if $|e(0)| > 0$ then $T^\uparrow(e, \delta) = 0$ for $\delta < |e(0)|$ while if $|e(0)| = 0$ then $T^\uparrow(e, \delta) \to 0$ follows by right-continuity of $e$ at $0$. As a consequence, $e_\delta^\uparrow(e)(t) \to e(t)$ as $\delta \to 0$ for any $e \in \Ecal$ and any $t \geq 0$, and we obtain, identifying final-dimensional distributions, that $E$ and $E'$ are equal in distribution. This proves that $E_n \Rightarrow E$. The previous arguments even show that $T^\uparrow(E_n, \delta) \Rightarrow T^\uparrow(E, \delta)$ for any $\delta \in H$, a fact that will be used in the next step.
	\\
	
	\newstep We now prove that $T(E_n) \Rightarrow T(E)$, from which the joint convergence $\Jcal(E_n) \Rightarrow \Jcal(E)$ follows similarly as for Lemma~\ref{lemma:joint-conv}. For any $\delta > 0$, we have by definition
	\[ T(E_n) = T^\uparrow(E_n, \delta) + T(e_\delta^\uparrow(E_n)) \]
	and so we have the following bounds, valid for any $\eta \in (0,1)$ and $x > 0$:
	\[ \P \left( T(e_\delta^\uparrow(E_n)) > x \right) \leq \P \left( T(E_n) \geq x \right) \leq \P\left( T^\uparrow(E_n, \delta) \geq \eta x \right) + \P \left( T(e_\delta^\uparrow(E_n)) \geq (1-\eta)x \right). \]
	
	Fix now $x$ such that $\P(T(E) = x) = 0$ and $\delta \in H$. Since $T^\uparrow(E_n, \delta) \Rightarrow T^\uparrow(E, \delta)$ by the previous step and $T(e_\delta^\uparrow(E_n)) \Rightarrow T(e_\delta^\uparrow(E))$ by the first step, the portmanteau theorem gives
	\begin{multline*}
		\P \left( T(e_\delta^\uparrow(E)) > x \right) \leq \liminf_{n \to +\infty} \, \P \left( T(E_n) \geq x \right) \leq \limsup_{n \to +\infty} \, \P \left( T(E_n) \geq x \right) \leq \P\left( T^\uparrow(E, \delta) \geq \eta x \right)\\
		+ \P \left( T(e_\delta^\uparrow(E)) \geq (1-\eta)x \right).
	\end{multline*}
	
	Recall that $T^\uparrow(e, \delta) \to 0$ as $\delta \to 0$. Since $T(e) = T^\uparrow(e, \delta) + T(e_\delta^\uparrow(e))$ this shows that $T(e_\delta^\uparrow(e)) \to T(e)$. Thus letting $\delta \to 0$ in the previous display gives 
	\[
		\P \left( T(E) > x \right) \leq \liminf_{n \to +\infty} \P \left( T(E_n) \geq x \right) \leq \limsup_{n \to +\infty} \P \left( T(E_n) \geq x \right) \leq \P \left( T(E) \geq (1-\eta)x \right).
	\]
	
	Letting now $\eta \to 0$ ends to prove that $T(E_n) \Rightarrow T(E)$ since $x$ was chosen such that $\P(T(E) = x) = 0$. The proof is complete.
\end{proof}

Combining Theorem~\ref{thm:main-2} and Proposition~\ref{prop:up} we obtain the following result.

\begin{theorem}
\label{thm:main-3}
	Let $V = \R^d$, $a = 0$ and $\varphi = \norm{\cdot}_\infty^d$. Let $\set \subset (0,\infty)$ such that $0 \in \overline \set$ and $\Ncal(\varphi=\varepsilon)=0$ for all $\varepsilon \in \set$. If $X$ is continuous, $g_\varepsilon(X_n) \Rightarrow g_\varepsilon(X)$, $e^\uparrow_\varepsilon(X_n) \Rightarrow e^\uparrow_\varepsilon(X)$ and $T(e^\uparrow_\varepsilon(X_n)) \Rightarrow T(e^\uparrow_\varepsilon(X))$ for every $\varepsilon \in \set$, then $X_n \Rightarrow X$.
\end{theorem}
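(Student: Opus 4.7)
The plan is to deduce Theorem~\ref{thm:main-3} by chaining the two results that immediately precede it, namely Proposition~\ref{prop:up} and Theorem~\ref{thm:main-2}. The former will upgrade the hypotheses bearing on the upward-shifted excursions $e^\uparrow_\varepsilon$ into the joint convergence of $(e_\varepsilon, T\circ e_\varepsilon)$, and the latter will then conclude from this together with the assumption $g_\varepsilon(X_n) \Rightarrow g_\varepsilon(X)$.

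First I would verify that the hypotheses of Proposition~\ref{prop:up} are met. The process $X$ is assumed continuous, and for every $\varepsilon \in \set$ the statement of Theorem~\ref{thm:main-3} provides $e^\uparrow_\varepsilon(X_n) \Rightarrow e^\uparrow_\varepsilon(X)$, $T(e^\uparrow_\varepsilon(X_n)) \Rightarrow T(e^\uparrow_\varepsilon(X))$, and $\Ncal(\varphi = \varepsilon) = 0$. Since $\set$ is by hypothesis a subset of $\{\varepsilon > 0 : \Ncal(\varphi = \varepsilon) = 0\}$, which is exactly the set on which Proposition~\ref{prop:up} applies, that proposition yields the joint convergence $(e_\varepsilon, T\circ e_\varepsilon)(X_n) \Rightarrow (e_\varepsilon, T\circ e_\varepsilon)(X)$ for every $\varepsilon \in \set$. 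Projecting onto each coordinate (or using the continuous mapping theorem) yields the individual convergences $e_\varepsilon(X_n) \Rightarrow e_\varepsilon(X)$ and $T(e_\varepsilon(X_n)) \Rightarrow T(e_\varepsilon(X))$ for all $\varepsilon \in \set$.

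At this stage I would feed everything into Theorem~\ref{thm:main-2}, whose ambient setting ($V = \R^d$, $a = 0$, $\varphi = \norm{\cdot}_\infty^d$) and standing assumptions on $\set$ (namely $0 \in \overline\set$ and $\Ncal(\varphi = \varepsilon)=0$ for $\varepsilon \in \set$) coincide exactly with those of Theorem~\ref{thm:main-3}. The three convergences required by Theorem~\ref{thm:main-2} are $g_\varepsilon(X_n) \Rightarrow g_\varepsilon(X)$, $e_\varepsilon(X_n) \Rightarrow e_\varepsilon(X)$ and $T(e_\varepsilon(X_n)) \Rightarrow T(e_\varepsilon(X))$ for every $\varepsilon \in \set$: the first is assumed in the statement, and the last two were just produced from Proposition~\ref{prop:up}. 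Theorem~\ref{thm:main-2} then delivers $X_n \Rightarrow X$, completing the proof.

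There is no genuine obstacle at this stage, since the proof is essentially a bookkeeping step: the substantive content has already been dispensed with by Proposition~\ref{prop:up} (whose four-step argument handles C-tightness and the identification of accumulation points needed to pass from shifted excursions to full excursions) and by Theorem~\ref{thm:main-2} (which itself rests on Theorem~\ref{thm:main} together with the joint-convergence Lemma~\ref{lemma:joint-conv} and the tightness Lemma~\ref{lemma:tightness-weak}). The only point worth flagging is that one must make sure that the convergences coming out of Proposition~\ref{prop:up} hold on exactly the set $\set$ that feeds Theorem~\ref{thm:main-2}, which is guaranteed by the hypothesis $\Ncal(\varphi = \varepsilon) = 0$ for $\varepsilon \in \set$.
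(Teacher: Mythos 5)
Your proposal is correct and is exactly the paper's argument: the paper's entire proof of Theorem~\ref{thm:main-3} is the single sentence ``Combining Theorem~\ref{thm:main-2} and Proposition~\ref{prop:up} we obtain the following result,'' and you have simply spelled out that combination. The only (shared) imprecision is that Proposition~\ref{prop:up} as literally stated requires its hypotheses for \emph{every} $\varepsilon$ with $\Ncal(\varphi=\varepsilon)=0$ rather than merely for $\varepsilon\in\set$, so your remark that $\set$ being a \emph{subset} of the non-atom set suffices reverses the logical direction --- but the paper glosses over the very same point, and the proof of the proposition goes through with $\delta$ restricted to $\set$.
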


\section{Generalization of Theorem~\ref{thm:main}} \label{sec:discussion}

In this section we do not assume anymore that $X_n$ and $X$ are regenerative processes in the sense of~\eqref{eq:def-reg}. The deterministic results of Section~\ref{sub:deterministic} concern concatenation of paths that are not assumed to be excursions; this allows to generalize Theorem~\ref{thm:main} to such a general setting. Let us begin by recalling It\^o's construction~\cite{Ito72:0}, which starts from a $\sigma$-finite measure $\Ncal$ on $\Ecal \setminus \{a\}$ and builds a process $X$ with excursion measure $\Ncal$.

Assume that $\Ncal$ satisfies $\Ncal(1 \wedge T) < +\infty$, let $\partial$ be a cemetery point and $(\alpha_t, t \geq 0)$ be a $\{\partial\} \cup (\Ecal \setminus \{\zero\})$-valued Poisson point process with intensity measure $\Ncal$. Let $d \geq 0$ and
\[ Y(t) = dt+\sum_{0 \leq s \leq t} T(\alpha_s) \]
with the convention $T(\partial) = 0$. Since $\Ncal(1 \wedge T) < +\infty$, $Y$ is well-defined and is a subordinator with drift $d$ and L\'evy measure $\Ncal(T \in \, \cdot \,)$. Let $Y^{-1}$ be its right-continuous inverse and define the process $X$ as follows:
\[ X(t) = \alpha_{Y^{-1}(t-)} \left( t - Y^{-1}(Y(t)-) \right) \]
for $t \geq 0$ such that $\Delta Y(Y^{-1}(t)) > 0$ and $0$ otherwise.

The key observation is that this construction also works if $\Ncal$ is a $\sigma$-finite measure on the set of paths $f$ killed at some time $\kappa(f) \in (0,\infty]$, which may not be the first hitting time of $a$: then one needs to consider the subordinator with L\'evy measure $\Ncal(\kappa \in \, \cdot \,)$ instead of $\Ncal(T \in \, \cdot \,)$. Let us call this construction the \emph{extended It\^o's construction}. The main difference with the setting of Theorem~\ref{thm:main} is that it may not be possible anymore to recognize the regenerative motifs on a sample path of $X$, i.e., the generalization of the map $e_\varepsilon$ to this setting may not be well-defined. Consider for instance the case where the motif is the concatenation of a random number of excursions. Hence the generalization of Theorem~\ref{thm:main} takes a slightly different form.

In the following statement, $I: D \to D$ is the identity map and $\varphi$ is now a map with domain $D$, i.e., $\varphi: D \to [0,\infty]$. Moreover, for $X$ obtained by the extended It\^o's construction from a measure $\Ncal$ and a drift coefficient $d$, we define $g^e_\varepsilon(X)$ as the left endpoint of the first motif $m$ with $\varphi(m) > \varepsilon$. The notation $g^e_\varepsilon(X)$ is a little abusive, since rigorously $g^e_\varepsilon$ is well-defined as a function of the Poisson point process of motifs $\alpha$ of $X$, but not necessarily as a function of $X$.

\begin{theorem}
	Let $\Ncal$ and $\Ncal_n$ be $\sigma$-finite measures on the set of killed paths. Assume that $\Ncal$ has infinite mass and let $X$ and $X_n$ the processes obtained by applying the extended It\^o's construction to $\Ncal$ and $\Ncal_n$, respectively (and arbitrary drift coefficients). Let $\set \subset (0,+\infty)$ such that $0 \in \overline \set$ and $\Ncal(\varphi =\varepsilon)=0$ for all $\varepsilon \in \set$. Assume that the sequence $(X_n)$ is tight and that $g_\varepsilon^e(X_n) \Rightarrow g_\varepsilon^e(X)$ and that $(I, \kappa, \varphi)$ under $\Ncal_n(\, \cdot \, | \, \varphi > \varepsilon)$ converges to $(I, \kappa, \varphi)$ under $\Ncal(\, \cdot \, | \, \varphi > \varepsilon)$ for every $\varepsilon \in C$. Then $X_n \Rightarrow X$.
\end{theorem}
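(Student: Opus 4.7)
The strategy parallels the proof of Theorem~\ref{thm:main}, substituting the Poisson point process of motifs for the regeneration property and ``motif'' for ``excursion'' throughout. The deterministic truncation/patching operators of Subsection~\ref{sub:deterministic} were set up for an arbitrary subdivision together with arbitrary c\`adl\`ag pieces, so Lemmas~\ref{lemma:relation-operators},~\ref{lemma:continuity-truncation} and~\ref{lemma:limit-truncation} transfer verbatim. The first step is to set up the motif data: for each $n$ and each $\varepsilon \in \set$, use the extended It\^o construction of $X_n$ to list its $\varepsilon$-big motifs $E^n_\varepsilon = (e^n_{\varepsilon,k}, k \geq 1)$ in order of occurrence, with left endpoints $g^n_{\varepsilon,k}$ and right endpoints $d^n_{\varepsilon,k} = g^n_{\varepsilon,k} + \kappa(e^n_{\varepsilon,k})$, and assemble the subdivision $\Pi^n_\varepsilon = (0, g^n_{\varepsilon,1}, d^n_{\varepsilon,1}, \ldots) \in \Scal$. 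Define $(\Pi_\varepsilon, E_\varepsilon)$ analogously for $X$. Since $\Ncal$ has infinite mass, the gaps $g_{\varepsilon,k+1}-d_{\varepsilon,k}$ are a.s.\ strictly positive so $\Pi_\varepsilon \in \Scal_+$, and by the very definition of the extended It\^o process one has $\Phi^{\Pi^n_\varepsilon}(X_n) = \Psi^{\Pi^n_\varepsilon}(E^n_\varepsilon)$, and similarly for $X$.

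The next step is to prove the joint thinning convergence $\big( (\Pi^n_\varepsilon, E^n_\varepsilon), \varepsilon \in \set \big) \Rightarrow \big( (\Pi_\varepsilon, E_\varepsilon), \varepsilon \in \set \big)$ by reproducing Lemma~\ref{lemma:thinning}. The Poisson structure makes the motifs $(e^n_{\varepsilon,k})_{k \geq 1}$ i.i.d.\ with common law $\Ncal_n(\,\cdot\, \mid \varphi > \varepsilon)$ and the gaps $(g^n_{\varepsilon,k+1}-d^n_{\varepsilon,k})_{k \geq 0}$ (with the convention $d^n_{\varepsilon,0}=0$) i.i.d.\ with common law that of $g^e_\varepsilon(X_n)$, so the two hypotheses of the theorem deliver marginal convergence for each $\varepsilon \in \set$. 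The two-level geometric sub-selection argument of Lemma~\ref{lemma:thinning}---which only uses the PPP structure and the condition $\Ncal(\varphi = \varepsilon_1) = 0$ for $\varepsilon_1 \in \set$---then upgrades this to joint convergence.

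From here, the argument of Subsection~\ref{sub:proof} transfers essentially line by line. For any accumulation point $Y'$ of the tight sequence $(X_n)$, Skorohod's theorem gives an almost sure joint limit $(Y', (\Pi'_\varepsilon, E'_\varepsilon)_{\varepsilon \in \set})$, and on this probability space I would realize a process $X' \stackrel{d}{=} X$ by applying the extended It\^o construction to $((\Pi'_\varepsilon, E'_\varepsilon)_{\varepsilon \in \set})$ together with the drift $d$ of $X$, so that $\Psi^{\Pi'_\varepsilon}(E'_\varepsilon) = \Phi^{\Pi'_\varepsilon}(X')$ holds by construction. Lemma~\ref{lemma:limit-truncation} (applicable because $\Pi'_\varepsilon \in \Scal_+$) and then Lemma~\ref{lemma:continuity-truncation} yield $\Phi^{\Pi'_\varepsilon}(Y') = \Phi^{\Pi'_\varepsilon}(X')$ almost surely for every $\varepsilon \in \set$; since $\Phi^S(f)$ coincides with $f$ on the even intervals of $S$, this forces $Y'(t) = X'(t)$ at every $t$ lying in an $\varepsilon$-big motif interval of $X'$, and letting $\varepsilon \to 0$ along $\set$ extends the identification to every motif interval of $X'$.

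The main obstacle is the final identification on the drift stretches between motifs, on which $X' = a$ but on which $\Phi^{\Pi'_\varepsilon}(Y') = a$ holds by definition of $\Phi^S$ and therefore gives no direct handle on $Y'$. When the drift of the It\^o construction of $X$ is zero, the motif intervals tile $[0,\infty)$ almost surely, the zero set of $X'$ has empty interior, and the conclusion $Y' = X'$ follows by right-continuity exactly as in the last paragraph of the proof of Theorem~\ref{thm:main}. When the drift is positive, one must additionally propagate the identity $Y' = a = X'$ across each drift interval: the natural route is to note that each drift interval is obtained as a limit, as $\varepsilon \to 0$ along $\set$, of odd intervals of $\Pi'_\varepsilon$, and to combine the Skorohod joint convergence with the tightness of $(X_n)$ to force $Y'$ to inherit the value $a$ pointwise on these limits. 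This last step is where the full strength of the tightness hypothesis is used and where the argument genuinely differs from that of Theorem~\ref{thm:main}.
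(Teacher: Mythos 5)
The paper does not in fact supply a proof of this generalization; it merely notes that the deterministic lemmas of Subsection~\ref{sub:deterministic} were stated for arbitrary subdivisions and paths and therefore carry over. Your first three paragraphs reproduce exactly the argument the authors have in mind: the transfer of Lemmas~\ref{lemma:relation-operators}, \ref{lemma:continuity-truncation}, \ref{lemma:limit-truncation} is immediate, the PPP structure supplies the i.i.d.\ decomposition needed for the analogue of Lemma~\ref{lemma:thinning}, and the Skorohod/accumulation-point manoeuvre of Subsection~\ref{sub:proof} goes through once one realizes $X'$ through the extended It\^o construction from the limiting data $(\Pi'_\varepsilon, E'_\varepsilon)_{\varepsilon\in\set}$ so that $\Pi'_\varepsilon$ is, by fiat, the big-motif subdivision of $X'$ (a point worth spelling out, since for motifs one cannot argue as in Theorem~\ref{thm:main} that $\Phi^{\Pi'_\varepsilon}(Y')(t)\neq a$ at $t$ inside a motif --- motifs may touch $a$ --- but one \emph{can} argue directly that $t$ eventually falls in an even interval of $\Pi'_\varepsilon$, giving $\Phi^{\Pi'_\varepsilon}(Y')(t)=Y'(t)$ and $\Phi^{\Pi'_\varepsilon}(X')(t)=X'(t)$).

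Your final paragraph, however, rests on a misconception about the structure of the ``drift part'' of the extended It\^o construction. You assert that with zero drift the motif intervals tile $[0,\infty)$ and that with positive drift there are ``drift intervals'' across which the identity $Y'=X'$ must be separately propagated, and you conclude that the argument ``genuinely differs'' from Theorem~\ref{thm:main} when $d>0$. Neither claim is correct. The complement of the union of the open motif intervals is the closed range of the subordinator $Y(t)=dt+\sum_{s\le t}\kappa(\alpha_s)$, and since $\Ncal$ has infinite mass the driving Poisson point process has points in every time interval, so $Y$ has jumps in every time interval and its range is \emph{nowhere dense} --- regardless of whether $d=0$ (Cantor set of measure zero) or $d>0$ (fat Cantor set of positive measure, still with empty interior). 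In particular there are no ``drift stretches'' to propagate across, the union of motif intervals is dense in $[0,\infty)$ in both cases, and the last paragraph of the proof of Theorem~\ref{thm:main} --- coincidence of two c\`adl\`ag paths on a dense set --- applies verbatim, using the infinite-mass hypothesis on $\Ncal$ exactly as before and without any additional appeal to tightness. You should replace your speculative final paragraph with this observation, which closes the proof cleanly.
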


Let us conclude by one remark and one example that motivate this extension. The remark is that the classical definition~\eqref{eq:def-reg} of a regenerative process is, to some extent, quite restrictive. For instance, this definition excludes processes that stay at $a$ for a duration that is not exponential. It also excludes processes (even Markovian ones) that have non-zero holding times but leave $a$ continuously. Consider for instance the Markov process that stays at $0$ for an exponential time, then increases at rate $1$ for an exponential time and jumps back to $0$: then~\eqref{eq:def-reg} fails for $\tau = \inf\{ t \geq 0: X(t) > 0 \}$.

The second example comes from queueing theory: consider the example of the $G/G/1$ queue. Then the queue length process does not regenerate when it hits $0$, because when it hits $0$ it stays there for a duration that may depend on the excursion that just finished. On the other hand, it does regenerate when a customer initiates a busy cycle, i.e., when the queue length process jumps from $0$ to $1$.

Although not regenerative in the sense of~\eqref{eq:def-reg}, all these processes can be obtained via the previous extended It\^o's construction and therefore fall within the framework of the previous theorem.

\end{document}